\newcommand{\Ash}[1]{\calA^\mathrm{sh}_{#1}}
\newcommand{\pd}[3]{\frac{\partial ^{#1} #2}{\partial #3}}
\newcommand{\bs}[1]{\boldsymbol{#1}}
\newcommand{\nrm}[2]{\ensuremath{\|#1\|_{#2}}}
\newcommand{\pdy}[1]{\partial^{#1}_{\bsy}}
\newcommand{\dist}{\mathrm{dist}}
\newcommand{\diam}{\mathrm{diam}}
\newcommand{\Df}{\ensuremath{D_\mathrm{f}}}
\newcommand{\Dff}{\ensuremath{D_{\mathrm{f}, 1}}}
\newcommand{\N}[0]{\mathbb{N}}
\newcommand{\R}[0]{\mathbb{R}}
\newlength\figureheight
\newlength\figurewidth
\newcommand{\BIG}{\bBigg@{3}}
\newcommand{\vast}{\bBigg@{4}}
\newcommand{\Vast}{\bBigg@{5}}
\newtheorem{theorem}{Theorem}
\newenvironment{namedproof}[1]{\begin{trivlist}
    \item[\hskip\labelsep{\textit{Proof of #1}}]}{$\hfill\Box$\end{trivlist}}
\newtheorem{assumption}{Assumption A$\!\!$}
\numberwithin{theorem}{section}
\newtheorem{Lemma}[theorem]{Lemma}
\newtheorem{Proposition}[theorem]{Proposition}
\newtheorem{Remark}[theorem]{Remark}
\newcommand{\sigdiff}[0]{\ensuremath{\sigma_\mathrm{diff}}}
\newcommand{\sigabs}[0]{\ensuremath{\sigma_\mathrm{abs}}}
\newcommand{\sigfiss}[0]{\ensuremath{\sigma_\mathrm{fiss}}}
\newcommand{\sumj}{\sum_{j=1}^\infty}
\newcommand{\satop}[2]{\stackrel{\scriptstyle{#1}}{\scriptstyle{#2}}}
\newcommand{\bsbeta}{{\boldsymbol{\beta}}}
\newcommand{\bsDelta}{{\boldsymbol{\Delta}}}
\newcommand{\bse}{{\boldsymbol{e}}}
\newcommand{\bsalpha}{{\boldsymbol{\alpha}}}
\newcommand{\bsgamma}{{\boldsymbol{\gamma}}}
\newcommand{\bsnu}{{\boldsymbol{\nu}}}
\newcommand{\bsm}{{\boldsymbol{m}}}
\newcommand{\bsx}{{\boldsymbol{x}}}
\newcommand{\bsy}{{\boldsymbol{y}}}
\newcommand{\wbsy}{\widetilde{\boldsymbol{y}}}
\newcommand{\wa}{\widetilde{a}}
\newcommand{\wb}{\widetilde{b}}
\newcommand{\wlam}{\widetilde{\lambda}}
\newcommand{\bsz}{{\boldsymbol{z}}}
\newcommand{\bsw}{{\boldsymbol{w}}}
\newcommand{\bszero}{{\boldsymbol{0}}}
\newcommand{\rd}{\,\mathrm{d}}
\newcommand{\bE}{\mathbb{E}}
\newcommand{\bbE}{\mathbb{E}}
\newcommand{\calA}{\mathcal{A}}
\newcommand{\calI}{\mathcal{I}}
\newcommand{\calM}{\mathcal{M}}
\newcommand{\calF}{\mathcal{F}}
\newcommand{\calG}{\mathcal{G}}
\newcommand{\calP}{\mathcal{P}}
\newcommand{\calW}{\mathcal{W}}
\renewcommand{\forall}{\text{ for all }}
\def\R{\mathbb{R}}
\newcommand{\setu}{{\mathrm{\mathfrak{u}}}}
\newcommand{\mask}[1]{{}}
\newcommand{\eps}{\varepsilon}
\definecolor{darkred}{RGB}{139,0,0}
\definecolor{darkgreen}{RGB}{0,100,0}
\definecolor{darkmagenta}{RGB}{170,0,120}
\definecolor{darkpurple}{RGB}{110,0,180}
\definecolor{darkblue}{RGB}{40,0,200}
\definecolor{darkbrown}{rgb}{0.75,0.40,0.15}
\newcommand{\be}{\begin{equation}}
\newcommand{\ee}{\end{equation}}
\newcommand{\bea}{\begin{eqnarray}}
\newcommand{\eea}{\end{eqnarray}}
\newcommand{\beas}{\begin{eqnarray*}}
\newcommand{\eeas}{\end{eqnarray*}}
\def\r2p{{\sqrt{2\pi}}}
\newcommand*{\CLip}{\ensuremath{C_\mathrm{Lip}}}
\newcommand*{\Cgap}{\ensuremath{C_\mathrm{gap}}}
\newcommand*{\wCLip}{\ensuremath{\widetilde{C}_\mathrm{Lip}}}
\newcommand*{\evalueLap}{\ensuremath{\chi_1}}
\newcommand*{\amin}{\ensuremath{a_{\min}}}
\newcommand*{\amax}{\ensuremath{a_{\max}}}
\newcommand*{\lambdabar}{\ensuremath{\overline{\lambda_1}}}
\newcommand*{\ubar}{\ensuremath{\overline{u_1}}}
\newcommand*{\lambdaunder}{\ensuremath{\underline{\lambda_1}}}
\newcommand*{\evalover}{\ensuremath{\overline{\lambda_k}}}
\newcommand{\indx}{{\mathfrak F}}
\newcommand{\bigO}{\mathcal{O}}
\title{Analysis of quasi-Monte Carlo methods for 
elliptic eigenvalue problems with stochastic coefficients}
\date{\today}
\let\@fnsymbol\@arabic
\author{A. D. Gilbert\footnotemark[1]\and
             I. G. Graham\footnotemark[2] \and
             F. Y. Kuo\footnotemark[3] \and
             R. Scheichl\footnotemark[1] $^\mathrm{,}$\footnotemark[2] \and
             I. H. Sloan\footnotemark[3]
             }
\begin{document}
\sloppy
\maketitle

\footnotetext[1]{Institute for Applied Mathematics \& Interdisciplinary Center for Scientific Computing,
									Universit\"at Heidelberg, 69120 Heidelberg, Germany\\
									\indent\indent\texttt{a.gilbert@uni-heidelberg.de}, \texttt{r.scheichl@uni-heidelberg.de}
									}
\footnotetext[2]{Department of Mathematical Sciences, University of Bath, Bath BA2 7AY UK\\
                          \indent\indent\texttt{i.g.graham@bath.ac.uk}
                          }
\footnotetext[3]{School of Mathematics and Statistics,
                          University of New South Wales, Sydney NSW 2052, Australia\\
                         \indent\indent\texttt{f.kuo@unsw.edu.au},
                         \texttt{i.sloan@unsw.edu.au}}
                                                   
\numberwithin{equation}{section}

\begin{abstract}
We consider the forward problem of uncertainty quantification for the  
generalised Dirichlet eigenvalue problem for a coercive second order partial differential operator with 
random coefficients, motivated by problems in structural mechanics,
photonic crystals  and neutron diffusion.  The PDE  coefficients are
assumed to be uniformly bounded random fields, represented as 
infinite series parametrised by  
uniformly distributed i.i.d. random variables.  The expectation  
of the fundamental eigenvalue of this problem is computed by (a) truncating the infinite series which  define  the coefficients; (b) approximating the resulting truncated problem using  
lowest order conforming finite elements and a sparse matrix eigenvalue solver; and (c) approximating the resulting finite (but high dimensional) integral 
by a randomly shifted quasi-Monte Carlo lattice rule,  with specially chosen generating vector. 
 We prove error estimates for the combined error, which depend on
 the truncation dimension $s$,
the finite element mesh  diameter $h$, 
and the number of quasi-Monte Carlo samples $N$.
Under suitable regularity assumptions, our bounds are of the particular  form
$\mathcal{O}(h^2 + N^{-1 + \delta})$, where 
$\delta > 0$ is arbitrary and the hidden constant is
independent of the truncation dimension, which needs to grow as
  $h\to 0$ and $N \to \infty$.   
As for the analogous PDE source problem, the conditions under which our
error bounds hold depend on a parameter $p \in (0, 1)$
representing the summability of the terms in the series expansions of the coefficients.
Although the eigenvalue problem is nonlinear, which means it is generally considered harder than the 
source problem, in almost all cases ($p \neq 1$)
we obtain error bounds that converge at the same rate
as the corresponding rate for the source problem.
The proof involves  a detailed  study of the regularity of the fundamental eigenvalue as a function of the random parameters. As a key intermediate result in
 the analysis, we prove that the spectral gap (between the fundamental and the second eigenvalues) is uniformly positive over all realisations of the random problem.  

\end{abstract} 
\section{Introduction}
In this paper, we will propose methods for solving  
random 2nd-order elliptic eigenvalue
problems (EVP) of the general  form  
\begin{align}
\label{eq:evp}
-\nabla\cdot\big(a(\bsx, \bsy)\,\nabla u(\bsx, \bsy)\big) 
+ b(\bsx, \bsy)\,u(\bsx, \bsy)
&= 
\lambda(\bsy) \,c(\bsx,\bsy ) \,u(\bsx, \bsy), \ \  \text{for } \bsx \in D,
\end{align}
where the derivative operator $\nabla$ is with respect to the 
{\em physical variable} $\bsx$ and where the {\em stochastic parameter}
\begin{equation*}
\bsy \,=\, (y_j)_{j \in \N} \in U \coloneqq [-\tfrac{1}{2}, \tfrac{1}{2}]^\N 
\end{equation*}
is an infinite-dimensional vector of independently and identically distributed 
uniform random variables on $[-\tfrac{1}{2}, \tfrac{1}{2}]$.  For
simplicity, the
physical domain $D \subset \R^d$, 
for $d = 1, 2, 3$, is assumed to be a bounded convex domain
with Lipschitz boundary. To
guarantee well-posedness of the eigenvalue problem \eqref{eq:evp}, 
we impose homogeneous Dirichlet boundary conditions:
\begin{align}
\label{eq:DBC} 
u(\bsx, \bsy) = 0 \quad  \text{for} \quad  \bsx \in \partial D . 
\end{align} 

Under the initial assumption that 
$a(\cdot, \bsy),\ b(\cdot, \bsy),\ c(\cdot, \bsy)  \in L^\infty(D)$, together with  
$a(\bsx, \bsy),\ c(\bsx, \bsy)\geq a_{\min}  >0$  for all $(\bsx, \bsy) \in D \times U$,
the eigenvalues in~\eqref{eq:evp}
are real and bounded from below (this is a simple extension of the results in 
\cite[Sec.~3.2]{W66}),  and the leftmost (or
{\em dominant}) eigenvalue $\lambda_1$ is simple. Since the
coefficients depend on the stochastic parameters, 
the eigenvalues $\lambda(\bsy)$ and corresponding eigenfunctions $u(\bsx, \bsy)$ 
will also be stochastic.

Problems of the form \eqref{eq:evp} appear in many areas of
engineering and physics. Two prominent examples are in nuclear reactor physics \cite{W66,DH76,S97,JC13} and in
photonics \cite{D99,K01,GG12,NS12}.
Problems of a similar type also appear in quantum physics,  
in acoustic, electromagnetic or elastic wave
propagation, and in structural mechanics, where there is a huge
engineering literature on the topic (see e.g. \cite{Pet04,GGR05}).

In nuclear reactor physics, the eigenproblem \eqref{eq:evp}
corresponds to the mono-energetic diffusion approximation of the
neutron transport equation \cite{W66,DH76,S97,JC13}. The dominant 
eigenvalue $\lambda_1$ of \eqref{eq:evp} describes the criticality of the
reactor, while the corresponding eigenfunction $u_1$ models 
the associated neutron flux. The coefficient functions $a$, 
$b$ and $c$ correspond, respectively, to the 
diffusion coefficient, the absorption cross section and the fission 
cross section of the various materials in the reactor. These coefficients
can vary strongly in $\bsx$ and are 
subject to uncertainty in the composition of the constituent
materials (e.g.~liquid and vapour in the coolant), due to wear
(e.g.~``burnt'' fuel) and due to geometric deviations from the original
reactor design \cite{AI10,W10,AEHW12,W13}. 

In photonic band gap calculations in translationally
invariant materials, e.g. photonic crystal fibres (PCFs), two
decoupled eigenproblems of the type
\eqref{eq:evp} have to be solved (with periodic boundary conditions): the transverse magnetic (TM) and the transverse
electric (TE) mode problem \cite{D99,K01,GG12,NS12}. Here, $b
\equiv 0$ and we have either $a \equiv 1$ and $c =
n^2$ (TM mode problem) or  $a =
1/n^{2}$ and $c \equiv 1$ (TE mode problem), where $n = n(\bsx,\bsy)$ is the refractive
index of the PCF. The  refractive
index can be subject to uncertainty,  due to heterogeneities or impurities in the material and due
to geometric variations \cite{E12,ZCC15}.

The  current paper  demonstrates  the power of  
quasi-Monte Carlo (QMC) methods for computing statistics of the
eigenvalues of \eqref{eq:evp}, \eqref{eq:DBC}.  Our analysis will be restricted to 
approximating the expected value of the dominant 
eigenvalue $\lambda_1$ and linear functionals of the
corresponding eigenfunction,
but the method is applicable much 
more generally, and in particular to the applications listed above. 
We assume that, for all $\bsx \in D$ and $\bsy \in U$, the
coefficients $a$ and $b$ admit series expansions of the following form:
\begin{align}
 a(\bsx, \bsy) \,=\, a_0(\bsx) + \sum_{j = 1}^\infty y_j a_j(\bsx) 
\quad \text{and} \quad
b(\bsx, \bsy) \,=\, b_0(\bsx) + \sum_{j = 1}^\infty y_j b_j(\bsx)\,,
\label{eq:a_general}
\end{align}
and for the analysis assume that $c(\bsx, \bsy) = c(\bsx)$.
Although the fields $a$ and $b$ are parametrised by the same
  infinite sequence of random
variables $(y_j)_{j\ge 0}$, this setting for the coefficients allows
complete flexibility 
with respect to the correlation between $a$ and $b$. To model
two fields that are not correlated with each other, it suffices to set
$a_{2j-1} \equiv 0$ and $b_{2j} \equiv 0$, for all $j \ge 1$. On the
other hand, if there exists a $j \ge 1$ such that $a_j \not\equiv 0$
and $b_j \not\equiv 0$ then the two random fields will be correlated.

For a function $ f : U \to \R$,  
its  expected value  with respect to the product uniform probability distribution is the  
infinite-dimensional integral:
\begin{equation*}
\int_{[-\frac{1}{2}, \frac{1}{2}]^\N} f(\bsy) \,\rd \bsy \,\coloneqq\, \lim_{s \rightarrow \infty} \int_{[-\frac{1}{2}, \frac{1}{2}]^s} f\left(y_1, \ldots, y_s, 0, \ldots\right) \rd y_1\cdots \rd y_s \, ,
\end{equation*}
provided that the limit exists.
Our quantity of interest is then
\begin{align}
\label{eq:qoi}
\bbE_\bsy\left[\lambda_1\right] \,=\, \int_{[-\frac{1}{2}, \frac{1}{2}]^\N} \lambda_1(\bsy) \, \rd \bsy \,.
\end{align}

Our strategy for approximating \eqref{eq:qoi} is to first truncate the
expansions in~\eqref{eq:a_general} to $s$ parameters by setting 
$y_j = 0$, for $j > s$, thus reducing 
\eqref{eq:qoi} to a finite-dimensional quadrature problem. Then, for each $\bsy \in [-1/2,1/2]^s$,
we approximate \eqref{eq:evp} using  a finite
element (FE) discretisation on a mesh $\mathscr{T}_h$ with mesh size
$h$, to obtain a parametrised generalised
matrix eigenproblem that can be solved iteratively (e.g. via an Arnoldi method or similar). 
The corresponding approximate dominant eigenvalue  
is denoted $\lambda_{1,
  s, h}(\bsy)$. Now to approximate the integral in
\eqref{eq:qoi}, we construct $N$ suitable QMC quadrature points in
the $s$-dimensional unit cube $[0, 1]^s$ via a rank-1 lattice rule
with generating vector $\bsz \in \N^s$
(cf. \cite{DKS13}). The entire pointset is shifted by a uniformly-distributed random 
shift $\bsDelta \in [0, 1)^s$ and then translated  into the cube $[-\frac{1}{2}, \frac{1}{2}]^s$. 
The final estimate of $\bE_\bsy\left[\lambda_1\right]$ is then the (equal-weight) 
average of the approximate eigenvalues  $\lambda_{1, s, h}$ 
at these $N$ shifted QMC quadrature points and is  
denoted $Q_{N,s}(\bsz,\bsDelta) \lambda_{1, s, h}$.

The error depends on  $h,s$
and $N$ and to estimate it  we make some further assumptions on the 
coefficients, which are all detailed in Assumption~A\ref{asm:coeff}. 
In particular, to bound the FE error (w.r.t.~$h$),
we require some spatial 
regularity of $(a_j)_{j\ge0}$, $(b_j)_{j\ge0}$ and $c$.
To bound the dimension-truncation error (w.r.t.~$s$) and the
quadrature error (w.r.t.~$N$), we
assume $p$-summability of the sequences 
$\left(\|a_j\|_{L^\infty(D)} \right)_{j\ge 0}$ and
$\left(\|b_j\|_{L^\infty(D)}\right)_{j\ge 0}$, for some $p \in
  (0,1)$.

The main result in this paper is that, under these assumptions,
there exists a constant independent of $h, s$ and $N$
such that
\begin{equation}
\label{eq:main_bound}
\sqrt{\bbE_\bsDelta\left[\left|\bbE_\bsy[\lambda_1] - Q_{N, s}(\bsz, \bsDelta)\lambda_{1,s, h}\right|^2\right]}
\,\leq\,C\,\left(h^2 + s^{-\frac{2}{p} + 1} + N^{-\alpha} \right)
\end{equation}
where $\alpha = \min(1 - \delta, 1/p - 1/2)$
for arbitrary $\delta \in (0, 1/2)$. 
This result, for which a full statement is given in 
Theorem \ref{thm:total_error} (along with a similar result for linear functionals
$\calG$ of the corresponding eigenfunction), summarises the
individual contributions to the overall error from the three
approximations, i.e. discretisation ($h$),  dimension-truncation ($s$)
and quadrature ($N$).
The errors in the three separate processes are established individually  in Theorems
\ref{thm:fe_err}, \ref{thm:trunc} and \ref{thm:qmc_err},
respectively. To give a simple example of the power of estimate
\eqref{eq:main_bound}, if $p$ is small enough, then the dimension-truncation
error is negligible and the total error in \eqref{eq:main_bound} is 
bounded by the optimal FE convergence rate $h^2$ plus a QMC convergence 
rate that is arbitrarily close to~$N^{-1}$. 
Importantly, the constant $C$ does not depend on $s$.

A key result in obtaining \eqref{eq:main_bound} is Lemma~\ref{lem:dlambda}, where we 
establish the  regularity of $\lambda_1$ and
$u_1$ with respect to $\bsy$. The bounds on the mixed partial derivatives 
$|\pdy{\bsnu}\lambda_1(\bsy)|$ of $\lambda_1$   
are of product and order-dependent (POD) form, as in the case of 
(linear) boundary value problems \cite{KSS12}. Here, $\bsnu$ is a
multi-index with finitely many non-zero entries $\nu_j \in \N$.
The order dependence of the bounds in Lemma \ref{lem:dlambda} is 
$(|\bsnu|!)^{1 + \epsilon}$, for $\epsilon$ arbitrarily close to zero, which is only slightly
larger than in the bounds in \cite{KSS12} and still allows us to achieve the (nearly) optimal 
dimension-independent QMC convergence rates. The constants in these bounds depend on the gap 
between $\lambda_1(\bsy)$ and the second smallest eigenvalue 
$\lambda_2(\bsy)$. Another important result is Proposition~\ref{prop:simple},
where we prove that this gap is bounded away from zero uniformly in $\bsy$ 
under the  assumptions which we shall make  on $a$,  $b$ and $c$.
Both Proposition~\ref{prop:simple} and Lemma~\ref{lem:dlambda}
are essential components of our error analysis, however, 
they are also significant results in their own right.

Although stochastic eigenproblems have been of 
interest in engineering for some time, 
the mathematical literature is less developed.
A common method of tackling these problems is the \emph{reduced basis method}
\cite{MMOPR00,Pau07,FMPV16,HWD17}, whereby the full parametric solution (eigenvalue) 
is approximated in a low-dimensional subspace that is constructed as the span of the solutions
at specifically chosen parameter values.
For the current work the most relevant paper is \cite{AS12}, where a sparse tensor approximation was
used to estimate the expected value of the eigenvalue. A key result there 
is that simple eigenpairs are analytic with respect to the
stochastic parameters, shown using the classical perturbation theory of Kato
\cite{Kato84}. However, no bounds on the derivatives
are given, which are required to theoretically justify the application
of QMC rules. Here, we extend the results from \cite{AS12} by proving explicit bounds on
the derivatives, which in turn allows us to derive our \textit{a priori} error bounds. 
Alternatively, this paper can also be viewed as extending the results on
QMC methods for stochastic elliptic source problems \cite{KSS12} to eigenvalue problems, 
and we remark that because of the nonlinearity of eigenvalue problems 
this is not merely a trivial extension. 
Despite the increased difficulty of this nonlinearity our error bound \eqref{eq:main_bound}  
achieves the same rates of convergence as the equivalent result for the PDE source problem
in \cite{KSS12},  for all $p \in (0, 1)$.
The only difference is that our result does not hold for $p = 1$, in which
case the result in \cite{KSS12} requires an additional assumption
on the summability anyway.

The structure of the paper is as follows. In Section~\ref{sec:background}, 
we provide some relevant background theory of
elliptic eigenproblems and of randomised lattice rules, and establish
the FE error bound. Section \ref{sec:parametric} contains the parametric regularity
analysis, which is then used in Section \ref{sec:error_analysis} to bound the quadrature
and the truncation error. The paper concludes with two brief numerical
experiments in Section \ref{sec:numerical}, which
demonstrate the sharpness of the bounds.
An appendix contains the (technical) proof of the FE element error
estimate.

\section{Preliminary theory}
\label{sec:background}

In this section we present some preliminary theory on variational 
eigenvalue problems, FE discretisation and QMC methods. First we outline all of our
assumptions on the coefficients, which, in particular, ensure that the problem \eqref{eq:evp}
is well-posed.

\begin{assumption}
\label{asm:coeff}
\hfill
\begin{enumerate}
\item\label{itm:coeff} $a$ and $b$ are of the form \eqref{eq:a_general}
with $a_j,\ b_j \in L^\infty(D)$, for all $j \ge 0$, and $c \in L^\infty(D)$.
\item\label{itm:amin} There exists $\amin > 0$ such that $a(\bsx, \bsy) \geq \amin$,
$b(\bsx, \bsy) \geq 0$ and $c(\bsx) \geq \amin$, for all $\bsx \in D$, $\bsy \in U$.
\item\label{itm:summable} For some $p \in (0, 1)$,
\begin{align*}
\sum_{j = 1}^\infty \nrm{a_j}{L^\infty(D)}^p \,<\, \infty
\quad \text{and} \quad
 \sum_{j = 1}^\infty \nrm{b_j}{L^\infty(D)}^p 
\,<\, \infty\,.
\end{align*}
\item\label{itm:reg_coeff} $a_j \in W^{1, \infty}(D)$ for $j \geq 0$ and
\begin{align*}
\sum_{j = 1}^\infty \nrm{a_j}{W^{1, \infty}(D)}
\,<\, \infty\,.
\end{align*}
\end{enumerate}
\end{assumption}

The assumption that $b(\bsx, \bsy) \geq 0$ is made without loss of generality
because any EVP with $b < 0$, but satisfying the rest of
  Assumption~A\ref{asm:coeff},  can be shifted to an
equivalent problem with ``new $b$'' non-negative by adding 
$\sigma \cdot c(\bsx)\cdot u(\bsx, \bsy)$ to both sides of \eqref{eq:evp},
where $\sigma$ is chosen such that
$-b(\bsx, \bsy) \leq \sigma\cdot c(\bsx)$ for all $\bsx$,~$\bsy$. 
Such a $\sigma$ exists due to Assumption~A\ref{asm:coeff}.
The eigenvalues of the original EVP are simply the eigenvalues of 
the shifted problem shifted by $-\sigma$,
and the corresponding eigenspaces are unchanged. 
This has been used previously in, e.g., \cite{GG12}.

Throughout the paper, when it is unambiguous we will drop the 
$\bsx$-dependence when referring to a function defined on $D$ 
at a parameter value $\bsy$.
For example, we will write the coefficients and eigenfunctions as
$a(\bsy) \coloneqq a(\cdot, \bsy)$, $b(\bsy) \coloneqq b(\cdot, \bsy)$ 
and $u(\bsy) \coloneqq u(\cdot, \bsy)$.

Assumptions~A\ref{asm:coeff}.\ref{itm:coeff}--A\ref{asm:coeff}.\ref{itm:summable}
imply that for all $\bsy\in U$ we have 
$a(\bsy)$, $b(\bsy) \in L^\infty(D)$.
Furthermore, by the triangle inequality,  
the $L^\infty$-norms of these two coefficients can be bounded from above 
independently of $\bsy$: for all $\bsx \in D, \bsy \in U$
\[
\nrm{a(\bsy)}{L^\infty(D)} \,\leq\, \nrm{a_0}{L^\infty(D)} + \frac{1}{2} \sum_{j = 1}^\infty \nrm{a_j}{L^\infty(D)}\,,
\]
and similarly for $\nrm{b(\bsy)}{L^\infty(D)}$.
For convenience, we define a single upper bound for all three coefficients
\begin{align}
\label{eq:coeff_bounded}
\amax \,\coloneqq\, 
\max \left\{\sup_{\bsy \in U}\nrm{a(\bsy)}{L^\infty(D)},\, 
\sup_{\bsy \in U}\nrm{b(\bsy)}{L^\infty(D)},\,
\nrm{c}{L^\infty(D)}\right\}\,.
\end{align}

In Assumption~A\ref{asm:coeff}.\ref{itm:reg_coeff}, $W^{1, \infty}(D)$ is
the usual Sobolev space of functions with essentially bounded gradient on $D$, to which we
attach the norm $\nrm{v}{W^{1, \infty}(D)} \coloneqq \max\{\nrm{v}{L^\infty(D)}, \nrm{\nabla v}{L^\infty(D)}\}$. 
This assumption 
is needed to obtain the regularity result in Proposition~\ref{prop:H^2}.

\subsection{Abstract theory for variational eigenproblems}
\label{sec:varevp}
To construct the variational formulation of the EVP \eqref{eq:evp},  we 
introduce the test space $V \coloneqq H^1_0(D)$, the usual first-order Sobolev 
space of real-valued functions with vanishing boundary trace, 
as well as its dual $V^* = H^{-1}(D)$, 
and equip $V$ with the norm
\begin{align*}
\nrm{v}{V} \,\coloneqq\, \nrm{\nabla v}{L^2(D)}\, .
\end{align*}

We identify $L^2(D)$ with its dual and note that we have the following compact 
embeddings: $V \subset L^2(D) \subset V^*$.
The $L^2(D)$ inner product is denoted by $\langle \cdot, \cdot \rangle$ and we use 
the same notation for the extension to the duality pairing on $V\times V^*$. 
Throughout the paper we shall repeatedly refer to the 
eigenvalues of the negative Laplacian on $D$, with boundary condition \eqref{eq:DBC}. 
These are strictly positive and, counting multiplicities, we denote them by
\begin{equation}
\label{eq:Lap_eval}
0 \,<\, \chi_1 \,<\,  \chi_2 \,\leq\, \cdots\,.
\end{equation}

Multiplying each side of \eqref{eq:evp} by $v \in V$ and integrating (by parts) 
over $D$, we obtain the variational formulation
\begin{align}
\label{eq:varform}
\nonumber
\int_D a(\bsx, \bsy) \nabla u(\bsx, \bsy)\cdot \nabla v(\bsx) \,\rd \bsx 
+ \int_D b(\bsx, \bsy) u(\bsx, \bsy)v(\bsx) \,\rd \bsx\\
\,=\,
\lambda(\bsy) \int_D c(\bsx)u(\bsx, \bsy)v(\bsx) \, \rd \bsx 
\quad \text{for all } v \in V\,.
\end{align}
Correspondingly, for each $\bsy$ we define the symmetric bilinear forms 
$\calA(\bsy; \cdot, \cdot) : V \times V \rightarrow \R$ by
\begin{align}
\label{eq:bilinear}
\calA(\bsy; w, v) \,\coloneqq\, \int_D a(\bsx, \bsy) \nabla w(\bsx) \cdot \nabla v(\bsx) \, \rd \bsx 
+ \int_D b(\bsx, \bsy) w(\bsx)v(\bsx) \,\rd \bsx\,,
\end{align}
and $\calM(\cdot, \cdot) : V\times V \rightarrow \R$ by
\begin{align}
\label{eq:M_inner}
\calM(w, v) \,\coloneqq\, \int_D c(\bsx) w(\bsx)v(\bsx) \, \rd \bsx\,,
\end{align}
which are both inner products on their respective domains. Again, we will use 
the same notation for the corresponding duality pairings on $V \times V^*$. 
The norm induced by $\calM$ is denoted 
\[
\nrm{v}{\calM} \,=\, \sqrt{\calM(v, v)}
\]
and (using Assumption~A\ref{asm:coeff} and \eqref{eq:coeff_bounded})
is equivalent to the $L^2(D)$-norm:
\begin{align}
\label{eq:nrms_equiv}
\sqrt{a_{\min}}\nrm{v}{L^2(D)} 
\,\leq\,
\nrm{v}{\calM} 
\,\leq\,
\sqrt{a_{\max}}\nrm{v}{L^2(D)}\,,
\quad \text{for } v \in L^2(D)\,.
\end{align}

For each $\bsy \in U$, the variational eigenproblem equivalent to \eqref{eq:evp} is then:
Find $0 \neq u(\bsy) \in V$ and $\lambda(\bsy) \in \R$ such that
\begin{align}\label{eq:varevp}
\nonumber
\calA(\bsy; u(\bsy), v) &\,=\, 
\lambda(\bsy) \calM(u(\bsy), v ),\quad \text{for all } v \in V\,,\\
\nrm{u(\bsy)}{\calM} &\,=\, 1\,.
\end{align}

In the following, let $\bsy \in U$ be fixed. The
bilinear form $\calA(\bsy; \cdot, \cdot)$ is coercive 
and bounded, uniformly in~$\bsy$, i.e,
\begin{align}
\label{eq:A_coerc}
\calA(\bsy; v, v) \,&\geq\, a_{\min}\nrm{v}{V}^2\,, 
\quad \text{for all } v \in V\,,
\quad \text{and}\\
\label{eq:A_bounded}
\calA(\bsy; w, v) \,&\leq\, a_{\max}\left(1 + \evalueLap^{-1} \right)
\nrm{w}{V}\nrm{v}{V}\,,
\quad \text{for all } w, v \in V\,,
\end{align}
with $\amin$ as in Assumption~A\ref{asm:coeff}.\ref{itm:amin} and
$\amax$ from \eqref{eq:coeff_bounded}, respectively. To establish \eqref{eq:A_bounded}
we have used the upper bound \eqref{eq:coeff_bounded} and the Poincar\'e inequality: 
\begin{align}\label{eq:poin}
\nrm{v}{L^2(D)} \,\leq\, \evalueLap^{-1/2}\nrm{v}{V} \, ,
\quad \text{for } v \in V\,,
\end{align}
where for notational convenience we write the constant
in terms of the Laplacian eigenvalue $\evalueLap$, as defined above in \eqref{eq:Lap_eval}.
It is easy to see that the bound in \eqref{eq:poin} holds true and
that equality is attained for $v = \phi_1$, the eigenfunction corresponding
to $\chi_1$.

In addition to the variational form \eqref{eq:varevp},
it will also be convenient for us to study the corresponding solution operator,
which we define now. 
Let  $f \in V^*$ be arbitrary, and 
for each $\bsy \in U$, consider $T(\bsy): V^*\rightarrow V$ given by
\begin{equation}
\label{eq:T_def}
\calA(\bsy; T(\bsy) f, v) \,=\, \calM(f, v)
\quad \text{for all } v \in V\,.
\end{equation}
Due to the symmetry of both $\calA(\bsy; \cdot, \cdot)$ and $\calM(\cdot, \cdot)$,
each operator $T(\bsy)$ is self-adjoint with respect to $\calM$.
Since $\calA(\bsy; \cdot, \cdot)$ is coercive \eqref{eq:A_coerc} and bounded \eqref{eq:A_bounded},
by the Lax--Milgram Theorem, see, e.g., \cite{Brezis11}, for every $f \in V^*$
there exists a unique solution $T(\bsy)f \in V$ to \eqref{eq:T_def},
which satisfies $\nrm{T(\bsy)f}{V} \leq \nrm{f}{V^*}/\amin$.
Hence, each $T(\bsy) : V^* \to V$ is bounded.

We can also consider the operators $T(\bsy)$ from $L^2(D)$ to $L^2(D)$, in which case
due to the compact embedding of $V$ into $L^2(D)$, each $T(\bsy) : L^2(D) \to L^2(D)$ is 
compact. In this case, for $f \in L^2(D)$ the Lax--Milgram Theorem again 
gives a unique solution $T(\bsy)f \in V$ with the bound
\begin{equation}
\label{eq:T_LaxMil}
\nrm{T(\bsy)f}{V} \,\leq\,
\frac{1}{\sqrt{\chi_1}}\frac{\amax}{\amin} \nrm{f}{L^2(D)}\,,
\end{equation}
where we have also used the equivalence of norms \eqref{eq:nrms_equiv},
along with the Cauchy--Schwarz and Poincar\'e inequalities to bound $\nrm{f}{V^*}$.

From the spectral theory for compact, selfadjoint operators  we know that
each $T(\bsy)$ has countably-many eigenvalues,
which are all finite, real, strictly positive and have finite multiplicity (see, e.g., \cite{Brezis11}).
Counting multiplicities, the eigenvalues of $T(\bsy)$ are denoted (in non-increasing order) by
\[
\mu_1(\bsy) \,\geq\, \mu_2(\bsy) \,\geq\, \cdots \,>\, 0\,,
\]
with $\mu_k(\bsy) \to 0$ as $k \to \infty$.
Comparing \eqref{eq:varevp} with \eqref{eq:T_def} we have that
$\lambda(\bsy)$ is an eigenvalue of \eqref{eq:varevp} if and 
only if $\mu(\bsy) = 1/ \lambda(\bsy)$ is an eigenvalue of $T(\bsy)$,
and their eigenspaces coincide.
 It follows that \eqref{eq:varevp}
 has countably-many eigenvalues
$(\lambda_k(\bsy))_{k \in \N}$,  which are all positive, have
finite multiplicity and accumulate only at infinity. Counting multiplicities, we write them 
(in non-decreasing order) as  
\begin{equation*}
0 \,<\, \lambda_1(\bsy) \,\leq\, \lambda_2(\bsy) \,\leq\, \lambda_3(\bsy) \,\leq\,
\cdots\, . 
\end{equation*}
with $\lambda_k(\bsy) \rightarrow \infty$ as $k \rightarrow
\infty$.
For an eigenvalue $\lambda(\bsy)$ of \eqref{eq:varevp}
we define its eigenspace to be 
\begin{equation*}
E(\bsy, \lambda(\bsy)) \,\coloneqq\, 
\left\{u : u \text{ is an   eigenfunction corresponding to } \lambda(\bsy)\right\}\,,
\end{equation*}
and from these eigenspaces, we can choose a sequence of
eigenfunctions $(u_k(\bsy))_{k \in \N}$ corresponding to
$(\lambda_k(\bsy))_{k \in \N}$ that form an orthonormal basis in $V$ 
with respect to $\calM(\cdot, \cdot)$. This again follows by the
Spectral Theorem for $T(\bsy)$.

The \emph{min-max principle} (e.g. \cite[(2.8)]{BO89}) gives a representation of the $k$th eigenvalue 
as a minimum over all subspaces $S_k \subset V$ of dimension $k$:
\begin{align}
\label{eq:minmax}
\lambda_k(\bsy) \,=\, \min_{\substack{S_k \subset V\\ \dim(S_k) = k}}
\max_{0 \neq u \in S_k}
\frac{\calA(\bsy; u, u)}{\calM(u, u)}\,.
\end{align}

We can use a combination of \eqref{eq:coeff_bounded},
\eqref{eq:nrms_equiv} and \eqref{eq:A_coerc}
to bound both the numerator and denominator in the min-max representation \eqref{eq:minmax}
from above and below independently of $\bsy$, which in turn lets us
bound the $k$th eigenvalue above and below independently of $\bsy$. Indeed, we have
\begin{align*}
\lambda_k(\bsy) \,&\geq\,
\frac{\amin}{\amax} \min_{\substack{S_k \subset V\\ \dim(S_k) = k}}\max_{0 \neq u \in S_k} 
\frac{\langle \nabla u, \nabla u\rangle}{\langle u, u\rangle}
\quad \text{and}\\
 \lambda_k(\bsy) \,&\leq\, 
\frac{\amax}{\amin} \min_{\substack{S_k \subset V\\ \dim(S_k) = k}}\max_{0 \neq u \in S_k} 
\frac{\langle \nabla u, \nabla u\rangle + \langle u, u\rangle}{\langle u, u\rangle}\,,
\end{align*}
but now the right hand side of both bounds contains the bilinear form
corresponding to the negative Laplacian on $D$.
Hence, using the min-max representation of the $k$th Laplacian eigenvalue $\chi_k$
the bounds on $\lambda_k(\bsy)$ can be equivalently written as
\begin{align}\label{eq:lambda_bnd}
\underline{\lambda_k} \,\coloneqq\, \frac{\amin}{\amax}\chi_k
\,&\leq\, \lambda_k(\bsy) \,\leq\,
\frac{\amax}{\amin} (\chi_k + 1)\,\eqqcolon\,\overline{\lambda_k}\,.
\end{align}
Taking $v = u_k(\bsy)$ as a test function in \eqref{eq:varevp}, 
we obtain 
\begin{align}
\label{eq:RQ}
\lambda_k(\bsy) 
\,=\,\calA\left(\bsy; u_k(\bsy), u_k(\bsy)\right)\,.
\end{align}
Then, using  coercivity 
\eqref{eq:A_coerc} as well as the upper bound \eqref{eq:lambda_bnd} on $\lambda_k(\bsy)$, we obtain 
\begin{align}
\label{eq:u_bnd}
\nrm{u_k(\bsy)}{V} \,\leq\, \sqrt{\frac{\lambda_k(\bsy)}{a_{\min}}}
\,\leq\, \frac{\sqrt{\amax(\chi_k + 1)}}{\amin}
\,\eqqcolon\, \overline{u_k}\,.
\end{align}

Of particular interest is the smallest (also referred to as the minimal or fundamental) 
eigenvalue $\lambda_1(\bsy)$.
It follows by the Krein--Rutman Theorem that for every $\bsy$
the fundamental eigenvalue $\lambda_1(\bsy)$ is simple, see
e.g. \cite[Theorems 1.2.5 and 1.2.6]{Hen06}.
The fact that $\lambda_1(\bsy)$ is simple for all $\bsy$ along with the uniform bound 
\eqref{eq:lambda_bnd} ensures that the integrand \eqref{eq:qoi} is well-defined.
That the integrals $\bbE_\bsy[\lambda_1]$ and $\bbE_\bsy[\calG(u_1)]$ (for $\calG \in V^*$)
exist follows by the truncation error bounds \eqref{eq:lam_s_strong} and \eqref{eq:u_s_strong}, respectively, 
which are shown later in Section~\ref{sec:dim} using Assumption~A\ref{asm:coeff}.

In order to prove our finite element convergence results in Theorem~\ref{thm:fe_err}, 
for $t \in [0,1]$, we introduce the spaces: 
\begin{equation}
\label{eq:Z^t}
Z^t \,\coloneqq\, \left\{v \in V : \Delta v \in H^{-1 + t}(D) \right\}\,,
\end{equation}
with norm
\begin{equation}
\label{eq:Z_nrm}
\nrm{v}{Z^t} \,\coloneqq\,
\left(\nrm{v}{L^2(D)}^2 + \nrm{\Delta v}{H^{-1 + t}(D)}^2\right)^\frac{1}{2}\,.
\end{equation}
where $H^r(D)$ is the usual fractional order Sobolev space (see \cite{KSS12}). 
When $t=1$ we abbreviate $Z^1 $ by $Z$.  
Since $D$ is convex, $Z = V \cap H^2(D)$. 

The following proposition shows that under Assumption~A\ref{asm:coeff}, 
in particular A\ref{asm:coeff}.\ref{itm:reg_coeff},
the eigenfunctions belong to $H^2(D)$ with norm bounded in terms of the corresponding
eigenvalue.

\begin{Proposition}\label{prop:H^2}
Let $\bsy \in U$, let Assumption~A\ref{asm:coeff} hold and suppose  $(\lambda(\bsy), u(\bsy))$
is an eigenpair of \eqref{eq:varevp}. 
Then $u(\bsy) \in Z = V \cap H^2(D)$ and there exists a constant $C>0$
independent of $\bsy$, such that 
\begin{align}
\label{eq:u_H^2}
\nrm{u(\bsy)}{Z} \,\leq\, C\lambda(\bsy)\, , \quad \text{for all} \quad \bsy \in U .  
\end{align}
\end{Proposition}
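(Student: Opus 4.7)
The plan is to reduce the variable-coefficient eigenvalue equation to a pure Poisson equation with an $L^2$ right-hand side, then invoke the standard $H^2$-regularity theorem for the Laplacian on convex domains. Since Assumption~A\ref{asm:coeff}.\ref{itm:reg_coeff} gives $a(\bsy)\in W^{1,\infty}(D)$ with
\[
\nrm{\nabla a(\bsy)}{L^\infty(D)} \,\leq\, \nrm{\nabla a_0}{L^\infty(D)} + \tfrac{1}{2}\sum_{j\geq 1}\nrm{a_j}{W^{1,\infty}(D)} \,=:\, M \,<\,\infty
\]
uniformly in $\bsy$, we may distribute the outer divergence in \eqref{eq:evp} and divide through by $a(\bsy) \geq \amin$ to write, in the sense of distributions,
\[
-\Delta u(\bsy) \,=\, F(\bsy) \,\coloneqq\, \frac{1}{a(\bsy)}\Big(\nabla a(\bsy)\cdot\nabla u(\bsy) \,-\, b(\bsy)\,u(\bsy) \,+\, \lambda(\bsy)\,c\,u(\bsy)\Big).
\]

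The first substantive step is to verify $F(\bsy)\in L^2(D)$ with a quantitative bound. Since $1/a(\bsy)\leq 1/\amin$ and $b(\bsy),c\in L^\infty(D)$ are bounded by $\amax$ (cf.~\eqref{eq:coeff_bounded}), the Cauchy--Schwarz inequality yields
\[
\nrm{F(\bsy)}{L^2(D)} \,\leq\, \frac{1}{\amin}\Big(M\,\nrm{u(\bsy)}{V} \,+\, \amax\,(1+\lambda(\bsy))\,\nrm{u(\bsy)}{L^2(D)}\Big).
\]
The second step is to plug in the a priori bounds implied by the normalisation $\nrm{u(\bsy)}{\calM}=1$: inequality \eqref{eq:nrms_equiv} gives $\nrm{u(\bsy)}{L^2(D)}\leq 1/\sqrt{\amin}$, while \eqref{eq:u_bnd} (applied to the given eigenpair, which uses only coercivity and the normalisation) gives $\nrm{u(\bsy)}{V}\leq\sqrt{\lambda(\bsy)/\amin}$.

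The third step is to invoke the classical result of Grisvard: on a bounded convex domain $D$ with Lipschitz boundary, for every $F\in L^2(D)$ the unique solution $w\in H^1_0(D)$ of $-\Delta w = F$ lies in $H^2(D)$ with $\nrm{w}{H^2(D)}\leq C_D\nrm{F}{L^2(D)}$. Applying this to $u(\bsy)$ shows $u(\bsy)\in V\cap H^2(D)=Z$ and, in particular, $\nrm{\Delta u(\bsy)}{L^2(D)} = \nrm{F(\bsy)}{L^2(D)}$, so by the definition \eqref{eq:Z_nrm} of $\nrm{\cdot}{Z}$,
\[
\nrm{u(\bsy)}{Z} \,\leq\, \nrm{u(\bsy)}{L^2(D)} + \nrm{F(\bsy)}{L^2(D)} \,\leq\, \frac{1}{\sqrt{\amin}} + \frac{1}{\amin}\Big(M\sqrt{\lambda(\bsy)/\amin} + \amax(1+\lambda(\bsy))/\sqrt{\amin}\Big).
\]

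The only subtlety is that this estimate contains terms of order $1$ and $\sqrt{\lambda(\bsy)}$, whereas the target is linear in $\lambda(\bsy)$. I would resolve this using the uniform \emph{lower} bound $\lambda(\bsy)\geq\underline{\lambda_1} = (\amin/\amax)\chi_1 > 0$ from \eqref{eq:lambda_bnd}, which implies both $1\leq\lambda(\bsy)/\underline{\lambda_1}$ and $\sqrt{\lambda(\bsy)}\leq\lambda(\bsy)/\sqrt{\underline{\lambda_1}}$. This absorbs the sub-linear contributions into a single constant $C$ depending only on $D$, $\amin$, $\amax$, and $M$ (hence independent of $\bsy$), yielding the desired estimate \eqref{eq:u_H^2}. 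The only genuine obstacle is confirming that Grisvard's $H^2$-regularity result applies under the stated hypotheses (bounded convex Lipschitz $D$); beyond that the argument is bookkeeping with the uniform bounds already established in Section~\ref{sec:varevp}.
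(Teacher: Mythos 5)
Your proof is correct and follows essentially the same route as the paper's: reduce the eigenvalue equation to an elliptic problem with an $L^2(D)$ right-hand side, invoke $H^2$-regularity on the convex domain, bound the data using $\nrm{u(\bsy)}{\calM}=1$, \eqref{eq:nrms_equiv} and \eqref{eq:u_bnd}, and absorb the sub-linear terms via the uniform lower bound $\lambda(\bsy)\geq\lambdaunder$ from \eqref{eq:lambda_bnd}. The only difference is in packaging: the paper keeps the variable-coefficient operator intact with right-hand side $f=(\lambda(\bsy)c-b(\bsy))u(\bsy)\in L^2(D)$ and cites the corresponding regularity theorem \cite[Theorem 4.1]{KSS12} (which is exactly where Assumption~A\ref{asm:coeff}.\ref{itm:reg_coeff} enters), whereas you unpack that theorem by distributing the divergence and reducing to the Laplacian by hand --- both are valid and yield the same $\calO(\lambda(\bsy))$ bound with constants independent of $\bsy$.
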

\begin{proof}
Since $u(\bsy) \in L^2(D)$ we can apply \cite[Theorem 4.1]{KSS12}
with $t = 1$ and $f = (\lambda(\bsy)c - b(\bsy))u(\bsy) \in L^2(D)$ 
to give $u(\bsy) \in V \cap H^2(D)$ with
 \begin{align*}
 \nrm{u(\bsy)}{Z} \,&\leq\, C \nrm{(\lambda(\bsy)c - b(\bsy))u(\bsy)}{L^2(D)}\\
& \leq\, C\left(\lambda(\bsy)\nrm{c}{L^\infty(D)}  +  
\nrm{b(\bsy)}{L^\infty(D)}\right)
\nrm{u(\bsy)}{L^2(D)}
 \end{align*} 
Then, using  \eqref{eq:coeff_bounded}, \eqref{eq:nrms_equiv} and the normalisation in \eqref{eq:varevp}, we obtain 
\begin{align*} \nrm{u(\bsy)}{Z} \, & \leq\, C \, \frac{\amax}{\sqrt{\amin}}  
\left(\lambda(\bsy)  +  1\right) 
\nrm{u(\bsy)}{\calM}\,   = \,   C \frac{\amax}{\sqrt{\amin}} \left( \lambda(\bsy) +1\right)  \\
&\leq  \,   C \frac{\amax}{\sqrt{\amin}} \left( 1 +  \frac{1}{\lambda_1(\bsy)} \right)\lambda(\bsy)  ,    
\end{align*}
 and the result follows by the lower bound in \eqref{eq:lambda_bnd}.
\end{proof}

\subsection{Bounding the spectral gap} 
\label{subsec:gap} 
The Krein--Rutman theorem guarantees that the {\em spectral gap} $\lambda_2(\bsy) - \lambda_1(\bsy) $  is positive for each parameter value $\bsy$. However, our estimates  for the derivatives of $\lambda_1(\bsy)$
proved in Section~\ref{sec:parametric} require {\em uniform positivity} of this  gap over all $\bsy \in U$. Here, we prove the required 
uniform positivity, using Assumption~A\ref{asm:coeff}.\ref{itm:summable}. 
We remark that this proof provides a justification for an assumption made 
previously without proof  in  \cite{AS12}.

The first step is the following elementary lemma, which shows that subsets of   
$\ell^\infty$ that are majorised by an $\ell^q$ sequence (for some  $1 < q< \infty$) are compact.

\begin{Lemma}
\label{lem:compact_subset}
Let $\bsalpha \in \ell^q$ for some  $1 < q < \infty$. The set $U(\bsalpha) \subset \ell^\infty$ 
given by
\begin{equation*}
U(\bsalpha) \,\coloneqq\, \left\{\bsw \in \ell^\infty 
: |w_j| \leq \frac{1}{2} |\alpha_j|\right\}
\end{equation*}
is a compact subset of $\ell^\infty$.
\end{Lemma}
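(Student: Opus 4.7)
The plan is to prove sequential compactness, since $\ell^\infty$ is a (complete) metric space. The key observation enabling compactness in $\ell^\infty$ is that $\bsalpha \in \ell^q$ with $q < \infty$ forces $\alpha_j \to 0$, so every $\bsw \in U(\bsalpha)$ lies in the subspace $c_0 \subset \ell^\infty$, and the bound $|w_j| \le \tfrac{1}{2}|\alpha_j|$ gives a \emph{uniform} rate of tail decay over the whole set.

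First I would take an arbitrary sequence $(\bsw^{(n)})_{n \in \N} \subset U(\bsalpha)$. For each fixed coordinate $j$, the sequence $(w_j^{(n)})_n$ is bounded by $\tfrac{1}{2}|\alpha_j|$, so a standard Cantor diagonal extraction produces a subsequence $(\bsw^{(n_k)})_k$ for which $w_j^{(n_k)} \to w_j^\ast$ as $k \to \infty$ for every $j \in \N$. Passing to the limit in the coordinate-wise inequality shows $|w_j^\ast| \le \tfrac{1}{2}|\alpha_j|$, so the candidate limit $\bsw^\ast \coloneqq (w_j^\ast)_{j \in \N}$ lies in $U(\bsalpha)$.

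The main step, and the only place where the $\ell^q$-majorisation is essential, is upgrading this coordinate-wise convergence to convergence in the $\ell^\infty$-norm. Given $\eps > 0$, since $\bsalpha \in \ell^q$ with $q < \infty$ we have $|\alpha_j| \to 0$, so there exists $N = N(\eps)$ with $|\alpha_j| \le \eps$ for all $j > N$. For $j > N$ this yields, uniformly in $k$,
\[
|w_j^{(n_k)} - w_j^\ast| \,\le\, |w_j^{(n_k)}| + |w_j^\ast| \,\le\, |\alpha_j| \,\le\, \eps.
\]
For the finitely many remaining coordinates $j = 1, \ldots, N$, coordinate-wise convergence gives some $K$ such that $|w_j^{(n_k)} - w_j^\ast| \le \eps$ whenever $k \ge K$. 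Combining the two estimates yields $\|\bsw^{(n_k)} - \bsw^\ast\|_{\ell^\infty} \le \eps$ for all $k \ge K$, establishing convergence in $\ell^\infty$.

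The only delicate point is controlling the tail uniformly over the sequence, and this is precisely where Assumption~A\ref{asm:coeff}.\ref{itm:summable} (via $\bsalpha \in \ell^q$ with $q < \infty$) plays its role; the argument would fail for $\bsalpha \in \ell^\infty \setminus c_0$. Everything else is a routine diagonalisation plus a triangle inequality, so the lemma follows once this tail estimate is in place.
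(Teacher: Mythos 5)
Your proof is correct, but it takes a genuinely different and more elementary route than the paper. The paper works inside $\ell^q$: it notes that $U(\bsalpha)$ is a bounded subset of the reflexive space $\ell^q$, extracts a \emph{weakly} convergent subsequence via \cite[Theorem~3.18]{Brezis11}, recovers componentwise convergence by testing against the coordinate functionals, and then upgrades to \emph{strong} $\ell^q$ convergence by splitting $\nrm{\bsy^{(n)}-\bsy^*}{\ell^q}^q$ into a finite head (controlled by componentwise convergence) and a tail (controlled by $\sum_{j>J}|\alpha_j|^q$), finally passing to $\ell^\infty$ via the embedding $\nrm{\cdot}{\ell^\infty}\leq\nrm{\cdot}{\ell^q}$. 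You instead obtain the componentwise-convergent subsequence directly by Bolzano--Weierstrass plus Cantor diagonalisation, and you prove $\ell^\infty$ convergence directly, using only that $\alpha_j\to 0$ to make the tail $\sup_{j>N}|w_j^{(n_k)}-w_j^*|\leq|\alpha_j|\leq\eps$ uniformly in $k$. Your argument avoids reflexivity and weak topologies altogether, and it in fact establishes the slightly stronger statement that $U(\bsalpha)$ is compact in $\ell^\infty$ whenever $\bsalpha\in c_0$ (no summability needed), whereas the paper's route uses $\bsalpha\in\ell^q$ in an essential-looking but ultimately inessential way. Both proofs are complete; yours is shorter and self-contained, the paper's has the (minor) advantage of yielding convergence in the stronger $\ell^q$ norm along the way.
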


\begin{proof}
Since $\ell^\infty$ is a normed (and hence a metric) space, $U(\bsalpha)$
is compact if and only if it is sequentially compact. To show sequential compactness of $U(\bsalpha)$, 
take any sequence 
$\{\bsy^{(n)}\}_{n \geq 1} \subset U(\bsalpha)$. 
Clearly, by definition of $U(\bsalpha)$,   each $\bsy^{(n)} \in \ell^q$ and moreover, 
\[
\nrm{\bsy^{(n)}}{\ell^q} \,\leq\, \frac{1}{2}\nrm{\bsalpha}{\ell^q} \,<\, \infty
\quad \text{for all } n \in \N\, .
\]
So $\bsy^{(n)}$ is a bounded sequence in $\ell^q$. 
Since $q < \infty$,  $\ell^q$ is a reflexive Banach space, and so  
by \cite[Theorem~3.18]{Brezis11} $\{\bsy^{(n)}\}_{n \geq 1}$ has a subsequence
that converges weakly to a limit in  $\ell^q$. We denote this limit by $\bsy^*$, and,   
with a slight abuse of notation, we denote the convergent subsequence again  by 
$\{\bsy^{(n)}\}_{n \geq 1}$.

We now prove that $\bsy^* \in U(\bsalpha)$ and that the weak convergence is in fact  strong,  
i.e.  we show  $\bsy^{(n)} \rightarrow \bsy^*$ in $\ell^\infty$, as 
$n \rightarrow \infty$.
For any  $j \in \N$, consider the linear functional $f_j:
\ell^q \rightarrow \R$ given by $f_j(\bsw) = w_j$,  where $w_j$
  denotes the $j$th element of the sequence $\bsw = (w_j)_{j \geq 1} \in \ell^q$. 
Clearly, $f_j \in (\ell^q)^*$ (the dual space) and using the weak convergence established above, it follows that  
\[
 y_j^{(n)} \,=\, f_j(\bsy^{(n)}) \,\rightarrow\, f_j(\bsy^*) \,=\, y^*_j 
\quad \text{as } n \rightarrow \infty\,, \quad \text{for each fixed} \ \ j .   
 \] 
That is, we have componentwise convergence. 
 Furthermore, since $|y_j^{(n)}| \leq \frac{1}{2}|\alpha_j|$ it follows that 
 $|y_j^*| \leq \frac{1}{2}|\alpha_j|$ for each $j$,  
 and hence $\bsy^* \in U(\bsalpha)$.
 
 Now, for any $J \in \N$ we can write
\begin{align}
\label{eq:q_norm_split}\nonumber 
\nrm{\bsy^{(n)} - \bsy^*}{\ell^q}^q
\,&=\, 
\sum_{j = 1}^J |y_j^{(n)} - y_j^*|^q
+ \sum_{j = J + 1}^\infty |y_j^{(n)} - y_j^*|^q\\
\,&\leq\,
J\max_{j = 1, 2, \ldots, J} |y_j^{(n)} - y_j^*|^q
+ \sum_{j = J + 1}^\infty |\alpha_j|^q\,.
 \end{align}
 
Let $\varepsilon > 0$. Since $\bsalpha \in \ell^q$, we can choose $J \in \N$ such that
\[
\sum_{j = J + 1}^\infty |{\alpha_j}|^q \,\leq\, \frac{\varepsilon^q}{2}\,,
\]
and since $\bsy^{(n)}$ converges componentwise we can choose $K \in \N$ such that
\[
|y_j^{(n)} - y_j^*| \,\leq\, (2J)^{-1/q} \varepsilon
\quad \text{for all } j = 1, 2, \ldots, J
\text{ and } n \geq K\,.
\]
Thus, by \eqref{eq:q_norm_split} we have that $\nrm{\bsy^{(n)} - \bsy^*}{\ell^q}^q \leq \varepsilon^q$
for all $n \geq K$, and hence that $\nrm{\bsy^{(n)} - \bsy^*}{\ell^q} \rightarrow 0$ 
as $n \rightarrow \infty$.
 
Because $\nrm{\bsw}{\ell^\infty} \leq \nrm{\bsw}{\ell^q}$ when 
$\bsw \in \ell^q$  and $1 < q < \infty$, 
this also implies that $\bsy^{(n)} \rightarrow \bsy^*$ in $\ell^\infty$, completing the proof.  
\end{proof}

A key property following from the perturbation theory of Kato \cite{Kato84} 
is that the eigenvalues $\lambda_k(\bsy)$ are continuous in $\bsy$,  which 
for completeness is shown 
below in Proposition~\ref{prop:eval_cts}. First, recall that $T(\bsy)$ is 
the solution operator as defined in \eqref{eq:T_def}, and
let $\Sigma(T(\bsy))$ denote the spectrum of $T(\bsy)$.

\begin{Proposition}
\label{prop:eval_cts}
Let Assumption~A\ref{asm:coeff} hold. Then the eigenvalues 
$\lambda_1, \lambda_2 , \ldots$ are Lipschitz continuous in~$\bsy$.
\end{Proposition}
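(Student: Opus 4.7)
My strategy is to pass to the solution operator $T(\bsy)$ defined in \eqref{eq:T_def} and use the fact that $\lambda_k(\bsy) = 1/\mu_k(\bsy)$ where $\mu_k(\bsy)$ are the eigenvalues of $T(\bsy)$, which is compact and self-adjoint with respect to $\calM$. The plan has three steps: (i) show that $\bsy \mapsto T(\bsy)$ is Lipschitz continuous from $\ell^\infty \supset U$ into the space of bounded operators on $L^2(D)$; (ii) invoke a standard perturbation inequality (Weyl/Lidskii) for eigenvalues of self-adjoint compact operators to transfer Lipschitz dependence to each $\mu_k$; (iii) use the uniform two-sided bounds on $\lambda_k$ from \eqref{eq:lambda_bnd} to pass from $\mu_k$ to $\lambda_k$.

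For step (i), fix $f \in L^2(D)$, $\bsy,\bsy' \in U$, and set $w \coloneqq T(\bsy)f$, $w' \coloneqq T(\bsy')f$. Subtracting the defining identities yields
\[
\calA(\bsy; w - w', v) \,=\, \int_D \bigl(a(\bsy') - a(\bsy)\bigr)\nabla w'\cdot\nabla v \,\rd\bsx + \int_D \bigl(b(\bsy') - b(\bsy)\bigr)w'\,v \,\rd\bsx
\]
for all $v\in V$. Choosing $v = w - w'$, applying coercivity \eqref{eq:A_coerc}, Cauchy--Schwarz, and the Poincar\'e inequality \eqref{eq:poin}, then using the a priori bound \eqref{eq:T_LaxMil} on $\|w'\|_V$, I obtain
\[
\nrm{w - w'}{V} \,\leq\, \frac{1}{\amin}\Bigl(\nrm{a(\bsy)-a(\bsy')}{L^\infty(D)} + \evalueLap^{-1}\nrm{b(\bsy)-b(\bsy')}{L^\infty(D)}\Bigr)\nrm{w'}{V}.
\]
The expansions \eqref{eq:a_general} together with Assumption~A\ref{asm:coeff}.\ref{itm:summable} (which, since $p<1$, implies $(\nrm{a_j}{L^\infty})_{j\geq1},(\nrm{b_j}{L^\infty})_{j\geq1}\in\ell^1$) give
\[
\nrm{a(\bsy)-a(\bsy')}{L^\infty(D)} \,\leq\, \nrm{\bsy-\bsy'}{\ell^\infty}\sum_{j=1}^\infty\nrm{a_j}{L^\infty(D)},
\]
and similarly for $b$. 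Combined with \eqref{eq:poin} to pass from $V$ to $L^2(D)$, this yields $\nrm{T(\bsy)-T(\bsy')}{L^2\to L^2}\leq L\,\nrm{\bsy-\bsy'}{\ell^\infty}$ for a constant $L$ independent of $\bsy,\bsy'$.

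For step (ii), since $T(\bsy)$ is self-adjoint with respect to $\calM$ and compact on $L^2(D)$, and the $\calM$-norm is equivalent to the $L^2$-norm by \eqref{eq:nrms_equiv}, the Weyl/Lidskii inequality for eigenvalues of self-adjoint compact operators gives
\[
|\mu_k(\bsy) - \mu_k(\bsy')| \,\leq\, \nrm{T(\bsy)-T(\bsy')}{\calM\to\calM} \,\leq\, C\,L\,\nrm{\bsy-\bsy'}{\ell^\infty}.
\]
For step (iii), I use $\lambda_k = 1/\mu_k$ together with the uniform lower bound $\mu_k(\bsy) = 1/\lambda_k(\bsy) \geq 1/\overline{\lambda_k}$ from \eqref{eq:lambda_bnd} to conclude
\[
|\lambda_k(\bsy) - \lambda_k(\bsy')| \,=\, \frac{|\mu_k(\bsy)-\mu_k(\bsy')|}{\mu_k(\bsy)\,\mu_k(\bsy')} \,\leq\, \overline{\lambda_k}^2\,CL\,\nrm{\bsy-\bsy'}{\ell^\infty},
\]
which is the desired Lipschitz estimate (with a $k$-dependent constant).

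There is no serious obstacle: the argument is a textbook perturbation calculation for compact self-adjoint operators. The only subtle point is verifying that the coefficient sequences are absolutely summable in $L^\infty(D)$, which is exactly where Assumption~A\ref{asm:coeff}.\ref{itm:summable} (with $p<1$, hence $\ell^p\subset\ell^1$) is used; without this, only continuity rather than Lipschitz continuity could be deduced. One could alternatively carry out the perturbation directly on the bilinear form $\calA(\bsy;\cdot,\cdot)$ through the min-max characterisation \eqref{eq:minmax}, which gives a slightly shorter argument but yields the same end result.
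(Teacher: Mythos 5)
Your proof is correct, and its overall skeleton (pass to the solution operator $T(\bsy)$, show $T$ is Lipschitz in the operator norm, transfer this to the ordered eigenvalues $\mu_k$, then invert via $\lambda_k = 1/\mu_k$ and the uniform bound $\overline{\lambda_k}$) coincides with the paper's. The one genuine difference is in the middle step: to get $|\mu_k(\bsy) - \mu_k(\bsy')| \le \nrm{T(\bsy)-T(\bsy')}{}$ for the \emph{$k$th ordered} eigenvalue, the paper invokes Kato's spectral-distance estimate, which only says that \emph{some} eigenvalue of $T(\bsy')$ lies near $\mu_k(\bsy)$, and then needs an additional argument (preservation of multiplicity of finite systems, plus the observation that the graphs of the ordered $\mu_k$ cannot cross or change multiplicity) to match up indices. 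You instead appeal to the Weyl/Lidskii inequality, which follows directly from the Courant--Fischer min-max characterisation of the $\mu_k$ and delivers the ordered-eigenvalue bound in one stroke; since $T(\bsy)$ is compact and positive, and self-adjointness holds in the $\calM$-inner product with $\nrm{\cdot}{\calM}$ equivalent to $\nrm{\cdot}{L^2(D)}$ by \eqref{eq:nrms_equiv} (a point you correctly flag), this is fully justified and arguably cleaner than the paper's route through \cite{Kato84}. The quantitative ingredients --- the coercivity/Cauchy--Schwarz estimate for $\nrm{T(\bsy)-T(\bsy')}{L^2(D)\to L^2(D)}$ and the use of $\ell^p\subset\ell^1$ from Assumption~A\ref{asm:coeff}.\ref{itm:summable} to bound $\nrm{a(\bsy)-a(\bsy')}{L^\infty(D)}$ by $\nrm{\bsy-\bsy'}{\ell^\infty}$ --- are the same in both arguments.
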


\begin{proof}
We prove the result by establishing the continuity of
the eigenvalues $\mu_k(\bsy)$ of $T(\bsy)$.
Let $\bsy$, $\bsy' \in U$ and consider the operators $T(\bsy), T(\bsy') : L^2(D) \rightarrow L^2(D)$
as defined in \eqref{eq:T_def}.
Since $T(\bsy)$, $T(\bsy')$ are bounded and self-adjoint with respect to $\calM$,
it follows from \cite[V, \S 4.3 and Theorem~4.10]{Kato84} that we have the
following notion of continuity of $\mu(\cdot)$ in terms of $T(\cdot)$
\begin{equation}
\label{eq:spectrum_cts}
\sup_{\mu \in \Sigma(T(\bsy))} \dist (\mu, \Sigma(T(\bsy')))
\,\leq\, \nrm{T(\bsy) - T(\bsy')}{L^2(D) \rightarrow L^2(D)}\,.
\end{equation}
%
For an eigenvalue $\mu_k(\bsy) \in \Sigma(T(\bsy))$, \eqref{eq:spectrum_cts} 
implies that there exists a $\mu_{k'}(\bsy') \in \Sigma(T(\bsy'))$ such that
\begin{equation}
\label{eq:spec_cts2}
|\mu_k(\bsy) - \mu_{k'}(\bsy')|
\,\leq\, \nrm{T(\bsy) - T(\bsy')}{L^2(D) \rightarrow L^2(D)}\,.
\end{equation}
Note that this means there exists an eigenvalue of $T(\bsy')$ close to
$\mu_k(\bsy)$, but does not imply that the $k$th eigenvalue of $T(\bsy')$
is close to $\mu_k(\bsy)$, that is, in \eqref{eq:spec_cts2} $k$ is not necessarily 
equal to $k'$. 
However, consider any $\mu_k(\bsy)$ and let $m$ denote its multiplicity.
Since $m < \infty$, we can assume without loss of generality that the collection 
$\mu_k(\bsy) = \mu_{k + 1}(\bsy) = \cdots = \mu_{k + m - 1}(\bsy)$
is a \emph{finite system} of eigenvalues in the sense of Kato.
It then follows from the discussion in \cite[IV, \S 3.5]{Kato84} that the eigenvalues in this
system depend continuously on the operator with multiplicity preserved.
This preservation of multiplicity is key to our argument, 
since it states that for  $T(\bsy')$ sufficiently close to $T(\bsy)$ there are $m$ 
consecutive eigenvalues 
$\mu_{k'}(\bsy'), \mu_{k' + 1}(\bsy'), \ldots , \mu_{k' + m - 1}(\bsy') \in \Sigma(T(\bsy'))$,
no longer necessarily equal, that are close to $\mu_k(\bsy)$.

A simple argument then shows that
each $\mu_k$ is continuous in the following sense
\begin{equation}
\label{eq:mu_cts_op}
|\mu_k(\bsy) - \mu_k(\bsy')|
\,\leq\, \nrm{T(\bsy) - T(\bsy')}{L^2(D) \rightarrow L^2(D)}\,.
\end{equation}
To see this, consider, for $k=1,2,\ldots$, the graphs of $\mu_k$ on $U$. Note that the separate graphs can touch (and in principle can even coincide over some subset of $U$), but by definition cannot cross (since at every point in $U$ the successive eigenvalues are nonincreasing); and by the preservation of  multiplicity a graph cannot terminate and a finite set of graphs cannot change multiplicity at an interior point.  Thus by (2.23) the ordered eigenvalues $\mu_k$ must be continuous for each $k \ge 1$  and satisfy \eqref{eq:mu_cts_op}.

It then follows from the relationship $\mu_k(\bsy) = 1/\lambda_k(\bsy)$ along with the upper bound
in \eqref{eq:lambda_bnd} that  we have a similar result for the
eigenvalues $\lambda_k$ of \eqref{eq:varevp}:
\begin{equation}
\label{eq:eval_cts_op}
\left|\lambda_k(\bsy) - \lambda_k(\bsy')\right|
\,\leq\, 
\evalover^2\nrm{T(\bsy) - T(\bsy')}{L^2(D) \rightarrow L^2(D)}\,.
\end{equation}

All that remains is to bound the right hand of 
\eqref{eq:eval_cts_op} by $\CLip\nrm{\bsy - \bsy'}{\ell^\infty}$,
with $\CLip > 0$ independent of $\bsy$ and $\bsy'$.
To this end, note that since the right hand side
of \eqref{eq:T_def} is independent of $\bsy$ we have
\[
\calA(\bsy; T(\bsy)f, v) \,=\, \calA(\bsy'; T(\bsy')f, v)
\quad 
\text{for all } f \in L^2(D), v \in V\,.
\]
Rearranging and then expanding this gives
\begin{align*}
&\calA\left(\bsy; \left(T(\bsy) - T(\bsy')\right)f, v\right))\\
&\qquad=\, \calA(\bsy'; T(\bsy')f, v) - \calA(\bsy; T(\bsy')f, v)
\\
&\qquad=\, \int_D\Big(\left[a(\bsx, \bsy') - a(\bsx, \bsy)\right]
\nabla [T(\bsy')f](\bsx) \cdot \nabla v(\bsx)\\
&\qquad\qquad\quad\,+ \left[b(\bsx, \bsy') - b(\bsx, \bsy)\right] \,[T(\bsy') f](\bsx) \, v(\bsx)\Big) \,\rd\bsx\,.
\end{align*}
Letting $v = (T(\bsy) - T(\bsy'))f \in V$, the left hand side can be bounded from below
using the coercivity \eqref{eq:A_coerc} of $\calA(\bsy; \cdot, \cdot)$, 
and the right hand side can be bounded from above using the Cauchy--Schwarz
inequality to give
\begin{align*}
\amin &\nrm{(T(\bsy) - T(\bsy'))f}{V}^2
\,\leq\, \max\left(\nrm{a(\bsy) - a(\bsy')}{L^\infty(D)},\, \nrm{b(\bsy) - b(\bsy')}{L^\infty(D)}\right)\\
&\quad\quad\cdot\left(\nrm{T(\bsy')f}{V}\nrm{(T(\bsy) - T(\bsy'))f}{V} + 
\nrm{T(\bsy')f}{L^2(D)}\nrm{(T(\bsy) - T(\bsy'))f}{L^2(D)}\right)\,.
\end{align*}
Applying the Poincar\'e inequality \eqref{eq:poin} to both $L^2$-norm factors, 
dividing by $\amin\nrm{(T(\bsy) - T(\bsy'))f}{V}$ 
and using the upper bound in \eqref{eq:T_LaxMil} we have
\begin{align*}
&\nrm{(T(\bsy) - T(\bsy'))f}{V} \\
&\qquad\leq\, \frac{\amax(1 + 1/\chi_1)}{\amin^2\sqrt{\chi_1}}
\nrm{f}{L^2(D)} \max\left(\nrm{a(\bsy) - a(\bsy')}{L^\infty(D)},\, \nrm{b(\bsy) - b(\bsy')}{L^\infty(D)}\right)\,.
\end{align*}
Then, applying the Poincar\'e inequality \eqref{eq:poin} to the left hand side and
taking the supremum over $f \in L^2(D)$ with $\nrm{f}{L^2(D)} \leq 1$,
in the operator norm we have
\[
\nrm{T(\bsy) - T(\bsy')}{L^2(D) \rightarrow L^2(D)}
\,\leq\,
\frac{\amax(\chi_1 + 1)}{\amin^2\chi_1^2}
\max\left(\nrm{a(\bsy) - a(\bsy')}{L^\infty(D)},\, \nrm{b(\bsy) - b(\bsy')}{L^\infty(D)}\right)\,.
\]

Using this inequality as an upper bound for \eqref{eq:eval_cts_op} we have
that the eigenvalues inherit the continuity of the coefficients
\begin{equation}
\label{eq:eval_cts_coeff}
|\lambda_k(\bsy) - \lambda_k(\bsy')|
\,\leq\, C\max\left(\nrm{a(\bsy) - a(\bsy')}{L^\infty(D)},\, \nrm{b(\bsy) - b(\bsy')}{L^\infty(D)}\right)\,,
\end{equation}
where
\[
C \,=\, \evalover^2 \frac{\amax(\chi_1 + 1)}{\amin^2\chi_1^2}
\,<\, \infty\,,
\]
is clearly independent of $\bsy$ and $\bsy'$.

Finally, to establish Lipschitz continuity with respect to $\bsy$, we 
recall Assumptions~A\ref{asm:coeff}.\ref{itm:coeff} and A\ref{asm:coeff}.\ref{itm:summable},
expand the coefficients in \eqref{eq:eval_cts_coeff} above and use the triangle inequality to give
\begin{align*}
|\lambda_k(\bsy) - \lambda_k(\bsy')|
\,&\leq\, C \sumj |y_j - y_j'|\max\left(\nrm{a_j}{L^\infty(D)},\, \nrm{b_j}{L^\infty(D)}\right)\\
\,&\leq\, C\Bigg(\sumj \max\left(\nrm{a_j}{L^\infty(D)},\, \nrm{b_j}{L^\infty(D)}\right) \Bigg) 
\nrm{\bsy - \bsy'}{\ell^\infty}\,.
\end{align*}
By Assumption~A\ref{asm:coeff} the sum is finite, and hence the eigenvalue $\lambda_k(\bsy)$ is Lipschitz in $\bsy$.
\end{proof}

Now that we have shown Lipschitz continuity of the eigenvalues and identified suitable compact subsets,
we can show that the spectral gap is bounded uniformly away from 0.

\begin{Proposition}
\label{prop:simple}
Let Assumption~A\ref{asm:coeff} hold. Then there exists a $\delta > 0$,
independent of $\bsy$, such that
\begin{equation}
\label{eq:gap}
\lambda_2(\bsy) -\lambda_1(\bsy)
\,\geq\, \delta\,.
\end{equation}
\end{Proposition}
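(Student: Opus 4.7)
My plan is to argue by contradiction: suppose no such $\delta > 0$ exists. Then there is a sequence $\bsy^{(n)} \in U$ with $\lambda_2(\bsy^{(n)}) - \lambda_1(\bsy^{(n)}) \to 0$. I would extract a limit point $\bsy^* \in U$, pass to the limit separately in $\lambda_1$ and $\lambda_2$, and conclude $\lambda_2(\bsy^*) = \lambda_1(\bsy^*)$, contradicting the simplicity of $\lambda_1(\bsy^*)$ guaranteed by the Krein--Rutman theorem.

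The chief technical obstacle is that $U = [-\tfrac{1}{2},\tfrac{1}{2}]^{\N}$ is bounded but not compact in $\ell^\infty$, so standard sequential compactness is unavailable. This is precisely what Lemma~\ref{lem:compact_subset} is designed to circumvent. Setting $\alpha_j \coloneqq \max(\nrm{a_j}{L^\infty(D)},\,\nrm{b_j}{L^\infty(D)})$, Assumption~A\ref{asm:coeff}.\ref{itm:summable} gives $\bsalpha \in \ell^p$ for some $p \in (0,1)$, and hence $\bsalpha \in \ell^q$ for every $q \geq p$; in particular $\bsalpha \in \ell^1$ and $\bsalpha \in \ell^q$ for some $q \in (1,\infty)$. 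I would then rescale the sequence by defining $\tilde{\bsy}^{(n)} \coloneqq (y_j^{(n)}\alpha_j)_{j \ge 1}$, so that $\tilde{\bsy}^{(n)} \in U(\bsalpha)$. By Lemma~\ref{lem:compact_subset}, $U(\bsalpha)$ is compact in $\ell^\infty$, so a subsequence (still indexed by $n$) satisfies $\tilde{\bsy}^{(n)} \to \tilde{\bsy}^*$ in $\ell^\infty$ for some $\tilde{\bsy}^* \in U(\bsalpha)$. Setting $y_j^* \coloneqq \tilde{y}_j^*/\alpha_j$ when $\alpha_j > 0$ and $y_j^* \coloneqq 0$ otherwise yields a candidate limit $\bsy^* \in U$ with $y_j^* \alpha_j = \tilde{y}_j^*$ for all $j$.

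To pass to the limit in the eigenvalues, I would invoke the sharper intermediate estimate already derived inside the proof of Proposition~\ref{prop:eval_cts}, namely
\[
|\lambda_k(\bsy^{(n)}) - \lambda_k(\bsy^*)| \,\leq\, C \sum_{j=1}^\infty |y_j^{(n)} - y_j^*|\,\alpha_j \,=\, C \sum_{j=1}^\infty |\tilde{y}_j^{(n)} - \tilde{y}_j^*|,
\]
valid for $k = 1, 2$. Although $\ell^\infty$-convergence of $\tilde{\bsy}^{(n)}$ does not by itself force $\ell^1$-convergence, we have the uniform domination $|\tilde{y}_j^{(n)} - \tilde{y}_j^*| \leq \alpha_j$ together with $\bsalpha \in \ell^1$; the dominated convergence theorem then drives the right-hand side to zero. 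Subtracting the cases $k = 1$ and $k = 2$ gives $\lambda_2(\bsy^*) - \lambda_1(\bsy^*) = 0$, contradicting the Krein--Rutman simplicity at $\bsy^*$ and completing the proof.

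The conceptually delicate point, and what I expect to be the main obstacle, is the mismatch between two topologies on parameter space: the one in which the eigenvalues depend continuously on $\bsy$ (a weighted $\ell^1$ norm coming from the summability assumption) and the one in which $U$ admits a useful compactness statement (the weighted $\ell^\infty$ topology of $U(\bsalpha)$). Lemma~\ref{lem:compact_subset} together with dominated convergence is exactly the device that bridges these topologies, and the hypothesis $p < 1$ is what makes both tools simultaneously applicable.
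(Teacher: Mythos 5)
Your argument is correct, and it shares the paper's skeleton --- rescale the parameter into the compact set $U(\bsalpha)$ of Lemma~\ref{lem:compact_subset}, invoke the eigenvalue continuity estimate established inside the proof of Proposition~\ref{prop:eval_cts}, and use the Krein--Rutman simplicity of $\lambda_1$ for the pointwise positivity of the gap --- but it resolves the central technical difficulty differently. The paper chooses $\alpha_j = \nrm{a_j}{L^\infty(D)}^{\eps} + \nrm{b_j}{L^\infty(D)}^{\eps} + 1/j$ with $\eps = 1-p$ (after reducing without loss of generality to $p>1/2$), precisely so that $\sum_j \alpha_j^{-1}\max\bigl(\nrm{a_j}{L^\infty(D)},\nrm{b_j}{L^\infty(D)}\bigr) < \infty$; this makes the reparametrised eigenvalues genuinely Lipschitz with respect to the $\ell^\infty$-norm on $U(\bsalpha)$, so the gap is a continuous function on a compact set and attains its (positive) minimum. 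With your choice $\alpha_j = \max\bigl(\nrm{a_j}{L^\infty(D)},\nrm{b_j}{L^\infty(D)}\bigr)$ that sum diverges, so you do \emph{not} obtain $\ell^\infty$-Lipschitz continuity on $U(\bsalpha)$; instead you use only componentwise convergence of $\wbsy^{(n)}$ together with the domination $|\widetilde{y}_j^{(n)} - \widetilde{y}_j^*| \le \alpha_j$ with $\bsalpha \in \ell^1$ and dominated convergence to pass to the limit along the specific contradicting sequence. This is a legitimate trade: you lose uniform continuity but do not need it for a sequential extreme-value argument, and in exchange you avoid the paper's slightly delicate construction of $\bsalpha$ (the exponent $\eps$, the reduction to $p>1/2$, and the $+1/j$ term guarding against division by zero, which you handle by setting $y_j^*=0$ whenever $\alpha_j=0$ --- harmless, since such coordinates do not enter the problem). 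Both proofs ultimately rest on the same two pillars, Lemma~\ref{lem:compact_subset} and the intermediate bound \eqref{eq:eval_cts_coeff}, so yours is a sound and marginally more economical variant rather than a different method.
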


\begin{proof}
The idea of the proof is to rewrite 
$a(\bsx, \bsy)$ as
$$ a(\bsx, \bsy) \ = \ a_0(\bsx) \ +\  \sumj  \widetilde{y}_j \wa_j(\bsx), $$
with  $\widetilde{y}_j = \alpha_j y_j$ and $\wa_j(\bsx) = a_j(\bsx)/\alpha_j$. We then choose  
$\bsalpha \in \ell^q$ to decay slowly enough  
so that 
$\sumj \Vert \wa_j\Vert_{L^\infty(D)} < \infty$ and we apply  a similar  reparametrisation 
procedure to  $b(\bsx, \bsy)$.  Then using the intermediate result \eqref{eq:eval_cts_coeff}
from the proof of Proposition~\ref{prop:eval_cts}
we can show that the eigenvalues of the ``reparametrised''  problem  are continuous in the 
new parameter  $\wbsy$,  which now ranges over the compact set $U(\bsalpha)$. 
The required bound on the spectral gap is obtained by using the equivalence of the 
eigenvalues of the original and reparametrised problems. 

To give some details,
note that there is no loss of generality in  assuming  $p > 1/2$, 
because if Assumption~A\ref{asm:coeff}.\ref{itm:summable} holds 
with exponent $p' \leq 1/2$ then it also holds for all $p \in (p', 1)$. 
We consequently set $\eps = 1-p \in (0, 1/2)$ and consider the sequence 
$\bsalpha$ defined by 
\begin{align} \label{eq:defalpha} 
\alpha_j \,=\, \Vert a_j\Vert ^\eps_{L^\infty(D)} + \Vert b_j\Vert ^\eps_{L^\infty(D)} + 1/j, \quad 
\text{for each} \ \ j = 1,2,\ldots .   
\end{align}
Setting  $q = p/\eps = p/(1-p) \in (1,\infty)$, using Assumption A\ref{asm:coeff}.3 and  
the triangle inequality, it is easy to see that 
$\bsalpha \in \ell^q$. Moreover, the inclusion of $1/j$ in
\eqref{eq:defalpha} ensures that  $\alpha_j \not= 0$, for all $j
\geq 1$. Hence, from now on, for  $\bsw = (w_j)_{j = 1}^\infty \in \ell^\infty$, 
 we can define the sequences  $\bsalpha \bsw = (\alpha_j w_j)_{j=1}^\infty$ and  $\bsw/\bsalpha = (w_j/\alpha_j)_{j=1}^\infty$. 
Then, recalling the definition of $U(\bsalpha)$ in Lemma \ref{lem:compact_subset}, 
it is easy to see that  
\[
\wbsy \in U(\bsalpha) \ \text{if and only if}\ \wbsy/\bsalpha \in U  \quad \text{and moreover} \quad \bsy \in U \ \text{if and only if}\ \bsalpha \bsy \in U(\bsalpha)\,.
\]
Now for $\bsx \in D$ and $\wbsy \in U(\bsalpha)$, we define
\[
\wa(\bsx, \wbsy) = a_0(\bsx) + \sum_{j=1}^\infty \widetilde{y}_j \frac{a_j(\bsx)}{\alpha_j} \ \ \text{and} \ \ 
\wb(\bsx, \wbsy) = b_0(\bsx) + \sum_{j=1}^\infty  \widetilde{y}_j \frac{b_j(\bsx)}{\alpha_j}  \,,
\]
from which it is easily seen that 
\begin{equation}
\label{eq:coeff_1to1}
\wa(\bsx, \wbsy) = a (\bsx, \wbsy/\bsalpha) \quad \text{and} \quad \wb(\bsx, \wbsy) = b (\bsx, \wbsy/\bsalpha)\,.
\end{equation}
Then we set 
\[
\widetilde{\calA}(\widetilde{\bsy}; w, v)
\,\coloneqq\, \int_D \left(\widetilde{a}(\bsx, \widetilde{\bsy}) \nabla w(\bsx) . \nabla v(\bsx)
+ \widetilde{b}(\bsx, \widetilde{\bsy}) w(\bsx) v(\bsx) \right) \, \rd \bsx
\quad \text{for } w, v \in V\, ,
\]
and we consider the reparametrised eigenvalue problem 
find $\widetilde{\lambda}(\widetilde{\bsy}) \in \R$
and $0 \neq \widetilde{u}(\widetilde{\bsy}) \in V$ such that
\begin{align}
\label{eq:reparam_evp}\nonumber
\widetilde{\calA}(\widetilde{\bsy}; \widetilde{u}(\widetilde{\bsy}), v)
\,&=\, \widetilde{\lambda}_k(\widetilde{\bsy}) \calM(\widetilde{u}(\widetilde{\bsy}), v)
\quad \text{for all } v \in V\,,\\
\nrm{\widetilde{u}(\widetilde{\bsy})}{\calM} \,&=\, 1\,.
\end{align}
Note that because we have equality between the original and reparametrised coefficients 
\eqref{eq:coeff_1to1}, for each $\bsy \in U$, and corresponding $\wbsy = \bsalpha \bsy\in U(\bsalpha)$,
\eqref{eq:coeff_1to1} implies
that there is equality between eigenvalues
$\lambda_k(\bsy)$ of \eqref{eq:varevp} and  
$\wlam_k(\wbsy)$ of 
the reparametrised eigenvalue problem \eqref{eq:reparam_evp}
\begin{equation}
\label{eq:eval_1to1}
\lambda_k(\bsy) \,=\, \widetilde{\lambda}_k(\wbsy)
\quad \text{for } k \in \N\,,
\end{equation}
and their eigenspaces coincide.

Moreover, for an eigenvalue $\wlam_k(\wbsy)$ \eqref{eq:reparam_evp}, 
using \eqref{eq:eval_1to1} in the inequality \eqref{eq:eval_cts_coeff} we have
\[
|\widetilde{\lambda}_k(\widetilde{\bsy}) - \widetilde{\lambda}_k(\widetilde{\bsy}')|
\,\leq\, C \max\left(\nrm{a(\wbsy/\bsalpha) - a(\wbsy/\bsalpha)}{L^\infty(D)},\,
\nrm{b(\wbsy/\bsalpha) - b(\wbsy'/\bsalpha)}{L^\infty(D)}\right)\,,
\]
which after expanding the coefficients and using the triangle inequality becomes
\[
|\widetilde{\lambda}_k(\widetilde{\bsy}) - \widetilde{\lambda}_k(\widetilde{\bsy}')|
\,\leq\,
\underbrace{C\Bigg(\sum_{j = 1}^\infty 
\frac{1}{\alpha_j}\max\left(\nrm{a_j}{L^\infty(D)}, \ \nrm{b_j}{L^\infty(D)}\right)\Bigg)}_{\wCLip}
\nrm{\wbsy - \wbsy'}{\ell^\infty}\,,
\]
where $\wCLip$ is clearly independent of $\wbsy$ and $\wbsy'$.
Now by \eqref{eq:defalpha} together with Assumption~A\ref{asm:coeff}, 
we have 
\[
\sumj \frac{\Vert a_j\Vert_{L^\infty(D)}}{\alpha_j} \ \leq \ \sumj  \Vert a_j\Vert_{L^\infty(D)}^{1-\eps} \ = \  \sumj  \Vert a_j\Vert_{L^\infty(D)}^{p} <\  \infty \,,
\]
with the analogous estimate for $ \sumj \Vert b_j\Vert_{L^\infty(D)}/\alpha_j$. 
Thus, $\wCLip < \infty$ and hence the reparametrised eigenvalues 
are continuous on $U(\bsalpha)$.

It immediately follows that the spectral gap 
$\widetilde{\lambda}_2(\widetilde{\bsy}) - \widetilde{\lambda}_1(\widetilde{\bsy})$
is also continuous on $U(\bsalpha)$, which by Lemma~\ref{lem:compact_subset} is a 
compact subset of $\ell^\infty$. Therefore, the non-zero
minimum is attained giving that the spectral gap  
$\widetilde{\lambda}_2(\widetilde{\bsy}) - \widetilde{\lambda}_1(\widetilde{\bsy})$ 
is uniformly positive.
Finally, because there is equality between the original 
and reparametrised eigenvalues \eqref{eq:eval_1to1}
the result holds for the original problem over all $\bsy \in U$.
\end{proof}

\begin{Remark} 
\normalfont
An explicit bound on the spectral gap can be obtained by assuming
tighter restrictions on the coefficients.
For example, if $a \equiv 1 \equiv c $ and $b$ is weakly convex then
\cite{AC11} gives an explicit lower bound on the fundamental gap.
Alternatively, using the upper and lower bounds on the eigenvalues \eqref{eq:lambda_bnd},
we can determine restrictions on $\amin$ and $\amax$ such that the gap is bounded away from 0.
Explicitly, if
\[
\frac{\amin}{\amax} \,>\,
\sqrt{\frac{\chi_1 + 1}{\chi_2}}\,,
\]
then $\lambda_2(\bsy) - \lambda_1(\bsy) \geq \underline{\lambda_2} - \lambdabar > 0$.
\end{Remark} 

\subsection{Finite element discretisation}

To approximate eigenpairs $(\lambda(\bsy), u(\bsy))$ we introduce a
collection of finite element (FE) subspaces $V_h \subset V$ with dimension
$M_h$, each of which is associated with a conforming triangulation $\mathscr{T}_h$ of the domain $D$
and a  basis $(\phi_{h,i})_{i = 1}^{M_h}$. 
The parameter $h = \max\{ \diam(\tau) : \tau \in \mathscr{T}_h\}$ is called the \emph{meshwidth}.
The method works for very general spaces $V_h$, however to fully exploit
higher rates of convergence, stronger assumptions on the regularity of the coefficients 
and the domain would be required. 
As such, in the current paper we restrict our attention to piecewise linear finite elements, that is, 
each $V_h$ is the space of continuous functions that are linear on the elements of $\mathscr{T}_h$ 
and vanish on the boundary $\partial D$.
It is well-known that, for $t \in (0, 1]$, the best approximation error for the space $Z^t$
(as defined in \eqref{eq:Z^t} and \eqref{eq:Z_nrm}) by functions in $V_h$ satisfies
\begin{align}
\label{eq:fe_approx}
\inf_{w_h \in V_h} \nrm{v - w_h}{V} \,\leq\, Ch^t\nrm{v}{Z^t} 
\quad\text{for all } v \in Z^t\, .
\end{align}

For each $\bsy \in U$ the discrete eigenvalue problem is to find
$\lambda_h(\bsy) \in \R$ and $u_h(\bsy) \in V_h$ satisfying
\begin{align}\label{eq:discrete_evp}
\nonumber\calA(\bsy; u_h(\bsy), v) \,&=\, \lambda_h(\bsy)\calM( u_h(\bsy), v) \quad \text{for all } v \in V_h\,,\\
\nrm{u_h(\bsy)}{\calM} \,&=\, 1\,.
\end{align}
For each $\bsy \in U$,  the discrete eigenvalue problem \eqref{eq:discrete_evp}
admits $M_h$ eigenvalues
\begin{align*}
0 \,<\, \lambda_{1, h}(\bsy) \,\leq\, \lambda_{2, h}(\bsy) \,\leq\, \cdots \,\leq\,
\lambda_{M_h, h}(\bsy)\,,
\end{align*}
with corresponding eigenfunctions
\begin{align*}
u_{1, h}(\bsy),\, u_{2, h}(\bsy),\, \ldots, u_{M_h, h}(\bsy) \,\in\, V_h\,.
\end{align*}
For each fixed $k$, the $k$th finite element eigenvalue $\lambda_{k, h}(\bsy)$
converges from above to the $k$th eigenvalue of \eqref{eq:varevp}, i.e., for each $k$,   
%
\begin{align*}
\lambda_{k, h}(\bsy) \,\rightarrow\, \lambda_k(\bsy) 
\quad \text{as } h \to 0, 
\quad \text{with}\quad
\lambda_{k, h}(\bsy) \,\geq\, \lambda_k(\bsy)
\quad \text{for all } h  > 0, 
\end{align*}
and the corresponding FE eigenfunctions $(u_{k, h}(\bsy))_{k = 1}^{M_h}$
satisfy  
\begin{align*}
\dist_V(u_{k, h}(\bsy), E(\bsy, \lambda_k(\bsy)))
\,\rightarrow\, 0
\quad \text{as } h \to 0, 
\end{align*}
where $\dist_V(v, \calP)$ is the distance of $v \in V$ from
the subspace $\calP \subset V$
\begin{align*}
\dist_V(v, \calP)
\,\coloneqq\,
\inf_{w \in \calP} \nrm{v - w}{V}\,.
\end{align*}

The classical results on FE error estimates for eigenproblems are found in 
\cite{BO87,BO89,BO91};
however, we cannot simply use these results verbatim since their constants 
depend in complex and often hidden ways on the eigenvalues and eigenvalue gaps. 
For us, this means that they depend on $\bsy$, and so care must be taken to ensure
that the constants do not become unbounded for some $\bsy \in U$.
Theorem~\ref{thm:fe_err} below quantifies the FE convergence and in the proof
we show that all constants are independent of $\bsy$.
The proof is rather long and technical, and as such is left for the appendix.

\begin{theorem}
\label{thm:fe_err}
Let $\bsy \in U$ and suppose that Assumption~A\ref{asm:coeff} holds.
Then for $h > 0$ sufficiently small 
\begin{align}
\label{eq:lambda_fe_err}
|\lambda_1(\bsy) - \lambda_{1, h}(\bsy)| \,&\leq\, C_1h^{2} \, ,
\end{align} 
and $u_{1, h}(\bsy) \in E(\bsy, \lambda_{1, h}(\bsy))$ can be chosen such that
\begin{align}
\label{eq:u_fe_err}
\nrm{u_1(\bsy) - u_{1, h}(\bsy)}{V} \,&\leq\, C_2h \,.
\end{align}
Moreover, for $\calG \in H^{-1 + t}(D)$ with $t \in [0, 1]$
\begin{align}
\label{eq:Gu_fe_err}
\left|\calG(u_1(\bsy)) - \calG(u_{1, h}(\bsy))\right|
\,\leq\, C_3 h^{1 + t}\,,
\end{align}
and $C_1, C_2, C_3 > 0$ are independent of $\bsy$.  
\end{theorem}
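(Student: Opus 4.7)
The plan is to follow the Babu\v{s}ka--Osborn framework (\cite{BO87,BO89,BO91}) but to track every constant through the proof and show, using Assumption~A\ref{asm:coeff} together with Propositions~\ref{prop:H^2} and \ref{prop:simple} and the uniform bounds \eqref{eq:lambda_bnd}, that each constant can be bounded independently of $\bsy \in U$.

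First I would work with the parametric solution operator $T(\bsy)$ from \eqref{eq:T_def} and its Galerkin counterpart $T_h(\bsy) : L^2(D) \to V_h$ defined by $\calA(\bsy; T_h(\bsy) f, v_h) = \calM(f, v_h)$ for all $v_h \in V_h$. Uniform coercivity \eqref{eq:A_coerc} and boundedness \eqref{eq:A_bounded} of $\calA(\bsy;\cdot,\cdot)$ give a uniform C\'ea lemma, and together with Proposition~\ref{prop:H^2} and the best-approximation estimate \eqref{eq:fe_approx} (with $t=1$) this yields, for all $f \in L^2(D)$,
\[
\nrm{(T(\bsy) - T_h(\bsy))f}{V} \,\leq\, C h\,\nrm{T(\bsy)f}{Z} \,\leq\, C'\, h\,\nrm{f}{L^2(D)},
\]
with $C'$ independent of $\bsy$. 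A standard Aubin--Nitsche duality argument, in which the dual problem is again symmetric and uniformly coercive in $\bsy$, then upgrades this to $\nrm{(T(\bsy) - T_h(\bsy))f}{L^2(D)} \leq C h^2 \nrm{f}{L^2(D)}$, so that $T_h(\bsy) \to T(\bsy)$ in $\calL(L^2(D))$-norm uniformly in $\bsy$.

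Second, I would apply the Babu\v{s}ka--Osborn spectral perturbation theory to $T(\bsy)$ and $T_h(\bsy)$. Because $\lambda_1(\bsy)$ is simple with a gap bounded below uniformly in $\bsy$ by Proposition~\ref{prop:simple}, and because $\mu_1(\bsy) = 1/\lambda_1(\bsy)$ is bounded above and below uniformly in $\bsy$ by \eqref{eq:lambda_bnd}, one can choose $h$ small enough (again uniformly in $\bsy$) to isolate $\mu_1(\bsy)$ and $\mu_{1,h}(\bsy) = 1/\lambda_{1,h}(\bsy)$ by a contour disjoint from the rest of the spectrum. The standard spectral projection estimates then yield
\[
\dist_V\bigl(u_{1,h}(\bsy),\, E(\bsy,\lambda_1(\bsy))\bigr)
\,\leq\, C \inf_{w_h \in V_h} \nrm{u_1(\bsy) - w_h}{V},
\]
and choosing the $\calM$-normalisation of $u_{1,h}(\bsy)$ with the correct sign, combined with Proposition~\ref{prop:H^2} and \eqref{eq:fe_approx}, gives \eqref{eq:u_fe_err} with a constant $C_2$ depending only on $\amin,\amax,\chi_1$ and the gap $\delta$ of Proposition~\ref{prop:simple}.

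Third, for the eigenvalue bound \eqref{eq:lambda_fe_err} I would use the well-known symmetric identity
\[
\lambda_{1,h}(\bsy) - \lambda_1(\bsy)
\,=\, \calA\bigl(\bsy; u_1(\bsy) - u_{1,h}(\bsy),\, u_1(\bsy) - u_{1,h}(\bsy)\bigr)
- \lambda_1(\bsy)\,\calM\bigl(u_1(\bsy) - u_{1,h}(\bsy),\, u_1(\bsy) - u_{1,h}(\bsy)\bigr),
\]
valid for $\calM$-normalised eigenfunctions, together with \eqref{eq:A_bounded}, \eqref{eq:nrms_equiv}, \eqref{eq:lambda_bnd} and \eqref{eq:u_fe_err}, which doubles the rate to $h^2$. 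Finally, for the functional bound \eqref{eq:Gu_fe_err} with $\calG \in H^{-1+t}(D)$, I would use a duality (Aubin--Nitsche type) argument: represent $\calG(u_1 - u_{1,h})$ via the solution of the source problem with data $\calG$, whose regularity in $Z^t$ is controlled via a version of \cite[Theorem~4.1]{KSS12}, and split the resulting expression into a Galerkin orthogonality term (yielding $h\cdot h^t$) plus a lower-order term controlled by $\nrm{u_1 - u_{1,h}}{L^2(D)}$, which is itself $O(h^2)$ by Aubin--Nitsche applied to the eigenvalue problem as in \cite{BO91}. Uniformity of all constants in $\bsy$ is inherited from Assumption~A\ref{asm:coeff}, Propositions~\ref{prop:H^2} and \ref{prop:simple}, and \eqref{eq:lambda_bnd}.

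The main obstacle is precisely this last point: the Babu\v{s}ka--Osborn estimates contain constants that a priori depend on the isolation of the target eigenvalue from the rest of the spectrum and on resolvent bounds on the separating contour. Translating these into constants depending only on the uniform quantities $\amin,\amax,\chi_1$, the gap $\delta$ and the uniform eigenvalue bounds $\underline{\lambda_k},\overline{\lambda_k}$ requires a careful bookkeeping which is why the full argument is deferred to the appendix.
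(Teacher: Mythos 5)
Your proposal is correct in outline, but it takes a genuinely different route from the paper. The appendix deliberately avoids the Babu\v{s}ka--Osborn operator-convergence framework you propose and instead runs the elementary Strang--Fix min--max argument: it introduces the $\calA(\bsy;\cdot,\cdot)$-orthogonal projection $P_h u_1(\bsy)$, bounds $\nrm{u_1-P_hu_1}{V}\le Ch$ by viewing $P_hu_1$ as the FE solution of a source problem with right-hand side $\lambda_1 c\,u_1$, proves the \emph{eigenvalue} bound first by inserting the one-dimensional trial space $\mathrm{span}(P_hu_1)$ into the discrete min--max principle \eqref{eq:minmax_fe}, and only then recovers the eigenfunction bound by expanding $P_hu_1$ in the discrete eigenbasis and using the uniform separation $\lambda_1/(\lambda_{k,h}-\lambda_1)\le\overline{\lambda_1}/\delta$, finally invoking the identity of \cite[Lemma 3.1]{BO89} in the direction opposite to yours. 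Your route inverts this order: uniform operator-norm convergence $\nrm{T(\bsy)-T_h(\bsy)}{L^2\to L^2}=O(h^2)$, spectral projections on a contour of uniform radius (legitimate, since self-adjointness of $T(\bsy)$ with respect to $\calM$ gives the resolvent bound $\dist(z,\Sigma(T(\bsy)))^{-1}$ and the gap $\mu_1-\mu_2\ge\delta/(\overline{\lambda_1}\,\overline{\lambda_2})$ is uniform), then the eigenfunction estimate, then the eigenvalue estimate by the symmetric identity. Both work; the paper's choice buys fully explicit constants in terms of $\amin,\amax,\chi_1,\delta,\overline{\lambda_k}$ without re-deriving the BO resolvent estimates, while yours is the more standard and more readily generalisable machinery (e.g.\ to clustered eigenvalues) at the cost of precisely the constant-tracking you flag as the main obstacle. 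One point in your favour: in the duality argument for \eqref{eq:Gu_fe_err} you correctly note that Galerkin orthogonality fails for the eigenproblem and keep the extra term controlled by $\nrm{u_1-u_{1,h}}{L^2(D)}+|\lambda_1-\lambda_{1,h}|=O(h^2)$; the paper's appendix passes from $\calA(\bsy;u_1-u_{1,h},v_\calG)$ to $\calA(\bsy;u_1-u_{1,h},v_\calG-P_hv_\calG)$ without displaying that correction, so your version of this step is the more careful one.
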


\subsection{Quasi-Monte Carlo methods}\label{sec:qmc}
In this section, we give a brief overview of quasi-Monte Carlo
(QMC) rules and the analysis of the resulting integration error. 
For more details on QMC point sets and theory, see \cite{DKS13}.

QMC methods are equal-weight quadrature rules that  can be used for the approximation
of integrals over the (translated) $s$-dimensional unit cube
\begin{align*}
\calI_sf \,\coloneqq\, \int_{[-\frac{1}{2}, \frac{1}{2}]^s} f(\bsy) \,\rd \bsy \,,
\end{align*}
where the dimensionality $s$ is high.

In this paper we use a class of QMC rules called randomly shifted
rank-1 lattice rules, where the points
are constructed using a \emph{generating vector} $\bsz \in \N^s$ and a
\emph{shift} $\bsDelta$, which is uniformly distributed on $[0, 1]^s$.
Specifically, we have
\begin{equation*}
Q_{N,s}f \,\coloneqq\,
 \frac{1}{N} \sum_{k = 0}^{N - 1} f\left(\left\{\frac{k\bsz}{N} + \bsDelta\right\} - \boldsymbol{\tfrac{1}{2}}\right) \,,
\end{equation*}
where the braces denote taking the fractional part of each component and we have
subtracted the vector $\boldsymbol{\tfrac{1}{2}} \coloneqq (\tfrac{1}{2}, \ldots, \tfrac{1}{2})$
to map the points from $[0, 1]^s$ to $[-\tfrac{1}{2}, \tfrac{1}{2}]^s$.
In practice, the advantages of random shifting are threefold: the final approximation
is an unbiased estimate of the integral; using multiple shifts provides a practical
estimate of root-mean-square (RMS) error; and the construction of a good lattice rule
is simplified by the randomisation.

The error analysis of randomly shifted lattice rules requires
the integrand belong to a weighted Sobolev space such as one of those first
introduced in \cite{SW98}. The $s$-dimensional weighted Sobolev space,
denoted by $\calW_{s, \bsgamma}$, is the space of functions with
square-integrable mixed first derivatives and a norm which depends on a
family of positive real numbers called weights. For each $\setu \subseteq
\{1, \ldots, s\}$, the weight, denoted $\gamma_\setu$, measures
the ``importance'' of the subset of variables $y_j$ with $j \in \setu$.
We let the entire collection of weights be denoted by $\bsgamma$.

In this paper we equip $\calW_{s, \bsgamma}$ with the ``unanchored''
weighted norm.  To define it  we require the following notation: let
$\{1:s\} \coloneqq \{1, \ldots, s\}$, $\bsy_\setu \coloneqq (y_j)_{j \in
\setu}$, $\bsy_{-\setu} \coloneqq (y_j)_{j \in \{1:s\} \setminus \setu}$
and let $\partial^{|\setu|}/\partial \bsy_\setu \coloneqq \prod_{j\in\setu}
(\partial/\partial y_j)$ denote the first order mixed partial derivative
with respect to the variables $\bsy_\setu$. Now, let the norm (squared) of
$f \in \calW_{s, \bsgamma}$ be given by
\begin{align}
\label{eq:nrm_W}
\nrm{f}{s, \bsgamma}^2 \,=\, \sum_{\setu \subseteq \{1:s\}}
\frac{1}{\gamma_\setu} \int_{[-\frac{1}{2},\frac{1}{2}]^{|\setu|}}
\left(\int_{[-\frac{1}{2},\frac{1}{2}]^{s - |\setu|}} \pd{|\setu|}{}{\bsy_\setu} f(\bsy) \, \rd \bsy_{-\setu} \right)^2 \, \rd \bsy_\setu \, .
\end{align}

Good generating vectors $\bsz$ can be efficiently constructed using the
\emph{Fast CBC algorithm}, see \cite{SKJ02b} for CBC and 
\cite{NC06,NC06np} for ``its acceleration'' (or ``fast CBC''). 
It has been shown, see e.g. \cite[Theorem 5.10]{DKS13}, that
the RMS error of such a QMC approximation is bounded above by
\begin{align}
\label{eq:msebound}
&\sqrt{\bbE_\bsDelta\left(\left|\calI_s f - Q_{N, s}
f \right|^2\right)} \nonumber\\
&\,\leq\,
\left(\frac{1}{\varphi(N)}
\sum_{\emptyset\neq \setu \subseteq \{1:s\}} \gamma_\setu^\eta
\left(\frac{2\zeta(2\eta)}{(2\pi^2)^\eta}\right)^{|\setu|}
\right)^\frac{1}{2\eta}\,
\nrm{f}{s, \bsgamma}
\quad \text{for all } \eta \in (\tfrac{1}{2}, 1]\,,
\end{align}
where the subscript $\bsDelta$ in $\bbE_\bsDelta$ indicates that the
expectation is taken with respect to the (uniformly distributed) random shift, $\varphi(N) \coloneqq
|\{1\le \xi \le N : \gcd(\xi,N)=1\}|$ is the Euler totient function, and
$\zeta(x) \coloneqq \sum_{k = 1}^\infty k^{-x}$ for $x>1$ is the Riemann
zeta function. In particular, if $N$ is prime then $\varphi(N) = N - 1$.

\section{Parametric regularity}
\label{sec:parametric}

In this section we examine the regularity with respect to $\bsy$ of the minimal eigenpair
$(\lambda_{1}(\bsy), u_1(\bsy))$ of the variational eigenproblem
\eqref{eq:varevp}. The
results we obtain show that $\lambda_1(\bsy)$ belongs to the weighted
space $\calW_{s, \bsgamma}$ with norm defined in \eqref{eq:nrm_W}. 
This is required for the analysis of the QMC error in approximating 
$\bbE_\bsy[\lambda_1]$. Also, to obtain an \textit{a priori} bound
on the QMC  error we require a bound on the norm of $\lambda_1(\bsy)$ in
$\calW_{s, \bsgamma}$, hence we must bound its mixed first derivatives,
see Lemma~\ref{lem:dlambda}.
There we use the bounds on the spectral gap obtained in \S \ref{subsec:gap},
and present results not only for the first-order mixed derivatives but
also for higher-order derivatives.

We begin with the following coercive-type estimate, which is required in order to bound
the norm of the derivatives of the eigenfunction.

\begin{Lemma}\label{lem:coerc}
Let Assumption~A\ref{asm:coeff} hold. Then
for each $\bsy \in U$ and $\lambda \in \R$, define
$\Ash{\lambda}(\bsy; \cdot, \cdot) :V \times V \rightarrow \R$
to be the shifted bilinear form given by
\begin{align}
\Ash{\lambda}(\bsy; u, v) \,\coloneqq\, \calA(\bsy; u, v) - \lambda\calM( u, v) \, ,
\end{align}
with $\calA$ and $\calM$ defined by \eqref{eq:bilinear} and
\eqref{eq:M_inner}, respectively.
Restricted to the
$\calM$-orthogonal complement of the eigenspace corresponding to
$\lambda_1(\bsy)$, denoted by $E(\bsy, \lambda_1(\bsy))^\perp$, 
the $\lambda_1(\bsy)$-shifted bilinear form is uniformly coercive
in $\bsy$, i.e.,
%
\begin{align}\label{eq:Ashcoerc}
\Ash{\lambda_1(\bsy)}(\bsy; v, v) \,\geq\, 
\Cgap
\nrm{v}{V}^2 \quad \text{for all } v \in E(\bsy, \lambda_1(\bsy))^\perp \,,
\end{align}
where, for $\delta$ as in Proposition~\ref{prop:simple},
\begin{equation*}
\Cgap \,\coloneqq\, \frac{a_{\min}\delta}{\overline{\lambda_2}} \,.
\end{equation*}
\end{Lemma}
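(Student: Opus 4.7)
The plan is to expand $v$ in the $\calM$-orthonormal eigenbasis, exploit the $\calM$-orthogonality to the fundamental eigenspace to drop the $k=1$ term, and then turn the uniform spectral gap from Proposition~\ref{prop:simple} into a multiplicative lower bound relating $\lambda_k(\bsy)-\lambda_1(\bsy)$ to $\lambda_k(\bsy)$ itself. The coercivity \eqref{eq:A_coerc} then converts a bound in $\calA$ into the desired bound in the $V$-norm.

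Concretely, fix $\bsy \in U$ and let $(u_k(\bsy))_{k\in\N}$ be the sequence of eigenfunctions forming a $\calM$-orthonormal basis of $V$, as constructed in Section~\ref{sec:varevp}. For $v\in E(\bsy,\lambda_1(\bsy))^\perp$ the expansion reads $v=\sum_{k\ge 2} c_k\,u_k(\bsy)$ with $c_k=\calM(v,u_k(\bsy))$, because the simplicity of $\lambda_1(\bsy)$ (Krein--Rutman) guarantees that the only components removed by the orthogonality condition are those indexed by $k=1$. Using \eqref{eq:varevp} and $\calM$-orthonormality yields
\begin{equation*}
\calA(\bsy;v,v)=\sum_{k\ge 2}c_k^2\,\lambda_k(\bsy),\qquad \calM(v,v)=\sum_{k\ge 2}c_k^2,
\end{equation*}
so $\Ash{\lambda_1(\bsy)}(\bsy;v,v)=\sum_{k\ge 2}c_k^2\bigl(\lambda_k(\bsy)-\lambda_1(\bsy)\bigr)$.

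The crucial step is to show, for all $k\ge 2$,
\begin{equation*}
\lambda_k(\bsy)-\lambda_1(\bsy)\,\ge\,\frac{\delta}{\overline{\lambda_2}}\,\lambda_k(\bsy).
\end{equation*}
Indeed, since $\lambda_k(\bsy)\ge\lambda_2(\bsy)$ for $k\ge 2$, the function $t\mapsto 1-\lambda_1(\bsy)/t$ is increasing in $t>0$, so
\begin{equation*}
\frac{\lambda_k(\bsy)-\lambda_1(\bsy)}{\lambda_k(\bsy)}\,\ge\,\frac{\lambda_2(\bsy)-\lambda_1(\bsy)}{\lambda_2(\bsy)}\,\ge\,\frac{\delta}{\overline{\lambda_2}},
\end{equation*}
using Proposition~\ref{prop:simple} in the numerator and the uniform upper bound \eqref{eq:lambda_bnd} on $\lambda_2(\bsy)$ in the denominator.

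Plugging this back in and using the coercivity \eqref{eq:A_coerc} of $\calA(\bsy;\cdot,\cdot)$ gives
\begin{equation*}
\Ash{\lambda_1(\bsy)}(\bsy;v,v)\,\ge\,\frac{\delta}{\overline{\lambda_2}}\sum_{k\ge 2}c_k^2\,\lambda_k(\bsy)\,=\,\frac{\delta}{\overline{\lambda_2}}\,\calA(\bsy;v,v)\,\ge\,\frac{a_{\min}\,\delta}{\overline{\lambda_2}}\,\nrm{v}{V}^2\,=\,\Cgap\,\nrm{v}{V}^2,
\end{equation*}
which is the claim. The only delicate point is the transition from the additive gap $\lambda_2-\lambda_1\ge\delta$ (which alone yields only an $\calM$-norm bound) to a $V$-norm bound; this is handled precisely by the multiplicative comparison above, which converts the energy sum weighted by $\lambda_k-\lambda_1$ into a constant multiple of $\calA(\bsy;v,v)$. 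All constants are uniform in $\bsy$ since $\delta$ is furnished by Proposition~\ref{prop:simple} and $\overline{\lambda_2}$ by \eqref{eq:lambda_bnd}.
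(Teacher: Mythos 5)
Your proof is correct and follows essentially the same route as the paper's: expand $v$ in the $\calM$-orthonormal eigenbasis, note the $k=1$ component vanishes, diagonalise $\calA$ and $\calM$ on the expansion, and use $\bigl(\lambda_k-\lambda_1\bigr)/\lambda_k \ge \bigl(\lambda_2-\lambda_1\bigr)/\lambda_2 \ge \delta/\overline{\lambda_2}$ together with the coercivity \eqref{eq:A_coerc} to pass from $\calA(\bsy;v,v)$ to $\nrm{v}{V}^2$. The paper phrases the key comparison as $1-\lambda_1/\lambda_k \ge 1-\lambda_1/\lambda_2$ rather than your multiplicative form, but these are the same inequality.
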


\begin{proof}
Since the eigenfunctions $(u_k(\bsy))_{k \in \N}$ form a basis
in $V$ that is orthonormal with respect to the inner product $\calM$, 
for $v \in E(\bsy, \lambda_1(\bsy))^\perp$, letting $v_k(\bsy) \coloneqq
\calM( v, u_k(\bsy)) u_k(\cdot, \bsy)$ for $k = 1, 2, \ldots$, we can
write
\begin{align*}
v \,=\, \sum_{k = 2}^\infty v_k(\bsy) \,,
\end{align*}
where we have used $v_1(\bsy) = 0$ since $\calM( v, u_1(\bsy)) = 0$.
Henceforth, we will suppress the dependence of the eigenvalues and $v_k$ on $\bsy$.
For $v \in E(\bsy, \lambda_1)^\perp$ we have
\begin{align*}
\Ash{\lambda_1}(\bsy; v, v) \,&=\,
\Ash{\lambda_1}\left(\bsy; \sum_{k = 2}^\infty v_k, \sum_{\ell = 2}^\infty v_\ell \right)
\,=\,\sum_{k, \ell = 2}^\infty \left(\calA(\bsy;v_k, v_\ell) - \lambda_1 \calM( v_k, v_\ell )\right)\,.
\end{align*}
Since all $v_k$ are just scaled versions of $u_k$, they also satisfy
the variational equation \eqref{eq:varform}, so that $\calA(\bsy; v_k, v_\ell) =
\lambda_k \calM( v_k,v_\ell)$ and they are orthogonal with respect to $\calM(\cdot, \cdot)$,
implying that $\calA(\bsy; v_k, v_\ell) = 0$ for $k \neq \ell$. Thus we can reduce
the above double sum to
\begin{align*}
\calA^\mathrm{sh}_{\lambda_1}(\bsy;v, v)
 \,&=\, \sum_{k = 2}^\infty \left(\calA(\bsy;v_k, v_k) - \frac{\lambda_1}{\lambda_k} \calA(\bsy;v_k, v_k)  \right) \\
 &\geq\, \left(1 - \frac{\lambda_1}{\lambda_2}\right)\sum_{k = 2}^\infty \calA(\bsy;v_k, v_k) 
\,=\,  \left(1 - \frac{\lambda_1}{\lambda_2}\right)\sum_{k, \ell = 2}^\infty \calA(\bsy;v_k, v_\ell)
 \\
&=\, \left(1 - \frac{\lambda_1}{\lambda_2}\right) \calA(\bsy;v, v)
\,\geq\, a_{\min} \left(1 - \frac{\lambda_1}{\lambda_2}\right) \nrm{v}{V}^2\,.
\end{align*}
The final bound, which is independent of $\bsy$, follows from
Proposition~\ref{prop:simple} and \eqref{eq:lambda_bnd}.
\end{proof}

\begin{Remark}\normalfont
A similar estimate holds for the shifted bilinear form on $V_h\times V_h$, provided
$h$ is sufficiently small such that the FE eigenvalue gap is uniformly bounded from below.
Indeed, we can write
\[
  \lambda_{2,h} - \lambda_{1,h} \,=\, (\lambda_{2, h} - \lambda_1) - (\lambda_{1, h} - \lambda_{1})\,,
\]
and since the FE eigenvalues converge from above we can bound this from below by
\[
  \lambda_{2,h} - \lambda_{1,h} \,\geq\, \lambda_2 - \lambda_1 - \left|\lambda_{1} - \lambda_{1, h}\right|
  \,\geq\, \delta - Ch^{2}\,.
\]
with $C > 0$.
The second inequality follows from Proposition~\ref{prop:simple} and
Theorem~\ref{thm:fe_err}.
Thus, choosing $h$ such that $Ch^{2} <\delta$, or equivalently, taking $h < h_0$ with
\begin{align}
\label{eq:h0}
h_0 \, \coloneqq\,
\left(\frac{\delta}{C}\right)^{\frac{1}{2}}\,,
\end{align}
is a sufficient condition for $\lambda_{2,h} - \lambda_{1,h} > 0$, and
then Lemma~\ref{lem:coerc} can be rewritten for the FE eigenproblem.
\end{Remark}

We also require the following technical lemma to handle some
combinatorial factors that arise when bounding the derivatives.

\begin{Lemma}\label{lem:S_n}
Let $\epsilon \in (0, 1)$. For all $n \in \N$, the following bound holds:
\begin{align*}
S_n(\epsilon) \,\coloneqq\, \sum_{k = 1}^{n-1} \binom{n}{k}^{-\epsilon} \,\leq\, C_\epsilon
\,\coloneqq\, \frac{2^{1 - \epsilon}}{1 - 2^{-\epsilon}}
\left(\frac{e^2}{\sqrt{2\pi}} \right)^\epsilon\,.
\end{align*}
\end{Lemma}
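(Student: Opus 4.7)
The plan is to exploit the symmetry of the binomial coefficients and reduce the sum to a geometric series. Since $\binom{n}{k} = \binom{n}{n-k}$, pairing $k$ with $n-k$ gives
\[
S_n(\epsilon) \,\leq\, 2\sum_{k=1}^{\lfloor n/2 \rfloor} \binom{n}{k}^{-\epsilon},
\]
where a possible middle term for even $n$ is absorbed into the factor $2$. This already cuts the problem down to bounding the half-sum where the binomial coefficients are monotone.

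The crux is then a geometric lower bound on $\binom{n}{k}$ for $1 \leq k \leq n/2$. The cleanest route I see is induction via Pascal's identity: since $k \leq n/2$ implies $\binom{n-1}{k} \geq \binom{n-1}{k-1}$, we obtain $\binom{n}{k} = \binom{n-1}{k-1} + \binom{n-1}{k} \geq 2\binom{n-1}{k-1}$. Crucially, the ``first-half'' condition $k' \leq n'/2$ is preserved under descent to $(n-1, k-1), (n-2, k-2), \ldots$, because decreasing both $n$ and $k$ by $1$ leaves $2k-n$ unchanged (and it started non-positive since $k \leq n/2$). Iterating $k$ times therefore yields $\binom{n}{k} \geq 2^k \binom{n-k}{0} = 2^k$ for the full range $1 \leq k \leq n/2$.

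Combining the two steps, I would conclude
\[
S_n(\epsilon) \,\leq\, 2\sum_{k=1}^\infty 2^{-k\epsilon} \,=\, \frac{2^{1-\epsilon}}{1-2^{-\epsilon}} \,\leq\, \frac{2^{1-\epsilon}}{1-2^{-\epsilon}} \left(\frac{e^2}{\sqrt{2\pi}}\right)^\epsilon \,=\, C_\epsilon,
\]
where the last inequality absorbs the trivial factor $(e^2/\sqrt{2\pi})^\epsilon \geq 1$ (since $e^2 \approx 7.39 > \sqrt{2\pi} \approx 2.51$). The main obstacle is essentially the combinatorial bookkeeping in the inductive step: one must check that the descent doesn't exit the ``first-half'' regime, which is exactly what the invariance of $2k-n$ guarantees. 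An alternative route, which arrives at $C_\epsilon$ with the Stirling constant appearing ``naturally'' rather than by free enlargement, is to apply the two-sided Stirling formula $\sqrt{2\pi n}(n/e)^n \leq n! \leq e\sqrt{n}(n/e)^n$ directly, producing
\[
\binom{n}{k} \,\geq\, \frac{\sqrt{2\pi}}{e^2}\sqrt{\frac{n}{k(n-k)}}\left(\frac{n}{k}\right)^k\left(\frac{n}{n-k}\right)^{n-k},
\]
with the residual product handled via the entropy inequality $H_2(x) \geq x$ on $(0, 1/2]$ (easily checked by locating the unique critical point of $H_2(x)-x$ at $x=1/3$); this variant is slightly more technical but leads to the same constant.
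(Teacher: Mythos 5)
Your proof is correct, and it takes a genuinely different -- and more elementary -- route than the paper's. Both arguments begin with the same symmetry reduction to $2\sum_{k=1}^{\lfloor n/2\rfloor}\binom{n}{k}^{-\epsilon}$, but from there the paper invokes two-sided Stirling bounds (which is where the factor $(e^2/\sqrt{2\pi})^\epsilon$ in $C_\epsilon$ comes from) and then must prove, by a separate induction on $n$, that the auxiliary function $R_n(x)=\sqrt{nx}(1-x)^{n(1-x)+1/2}$ stays below $1$ on $[0,1/2]$ before the geometric series emerges. You instead establish the elementary inequality $\binom{n}{k}\ge 2^k$ for $1\le k\le n/2$ and pass directly to the geometric series, obtaining the sharper bound $2^{1-\epsilon}/(1-2^{-\epsilon})$ and absorbing the now-superfluous Stirling factor for free since $e^2/\sqrt{2\pi}>1$. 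Your route buys brevity and a better constant; the paper's larger constant is an artefact of its method, not a necessity. One small arithmetic slip: $2(k-1)-(n-1)=2k-n-1$, so $2k-n$ is \emph{not} invariant under the descent $(n,k)\mapsto(n-1,k-1)$ -- it decreases by $1$ at each step. This is harmless, since the first-half condition $2k\le n$ is then satisfied a fortiori all the way down (equivalently, one can bypass the induction entirely by writing $\binom{n}{k}=\prod_{j=0}^{k-1}\tfrac{n-j}{k-j}$ and noting each factor is at least $2$ when $n\ge 2k$), but the stated justification should be corrected.
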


\begin{proof}
By the symmetry of the binomial coefficient the sum can be bounded by
\begin{align*}
S_n(\epsilon) \,&\leq\, 2 \sum_{k = 1}^{\lfloor\frac{n}{2}\rfloor} \binom{n}{k}^{-\epsilon}
\,\leq\, 2\left(\frac{e^2}{\sqrt{2\pi}} \right)^\epsilon
\sum_{k = 1}^{\lfloor\frac{n}{2}\rfloor}
 \left(\frac{k^{k + \frac{1}{2}} (n -k)^{n - k + \frac{1}{2}}}{n^{n + \frac{1}{2}}}\right)^\epsilon\,,
\end{align*}
where we used the following bounds given by Stirling's formula
\begin{align*}
\sqrt{2\pi} n^{n + \frac{1}{2}} e^{-n}
\,\leq\, n! \,\leq\,
e n^{n + \frac{1}{2}} e^{-n}\,.
\end{align*}
Since $k \leq \frac{n}{2}$ we have the bound
$k^{k + \frac{1}{2}} \leq (\frac{n}{2})^k\sqrt{k}$, which gives
\begin{align*}
S_n(\epsilon) \,&\leq\, 2\left(\frac{e^2}{\sqrt{2\pi}} \right)^\epsilon
\sum_{k = 1}^{\lfloor\frac{n}{2}\rfloor}
\left(\frac{\left(\frac{n}{2}\right)^k\sqrt{k} (n -k)^{n - k + \frac{1}{2}}}
{n^{n + \frac{1}{2}}}\right)^\epsilon\\
&=\, 2\left(\frac{e^2}{\sqrt{2\pi}} \right)^\epsilon
\sum_{k = 1}^{\lfloor\frac{n}{2}\rfloor} \frac{1}{2^{\epsilon k}}
\bigg(\sqrt{k} \left(1 - \frac{k}{n}\right)^{n - k + \frac{1}{2}}
\bigg)^\epsilon\,.
\end{align*}

The next step is to show that for each term in the sum the factor occurring
inside the brackets is always bounded above by 1.
To proceed, for $n = 1, 2, 3, \ldots$ we define the functions 
$R_n: [0, 1/2] \rightarrow \R$ by
\begin{align*}
R_n(x) \,\coloneqq\,
\sqrt{n x}\left(1 - x\right)^{n(1 - x) + \frac{1}{2}}\,.
\end{align*}
We prove by induction that for $n = 1, 2, 3, \ldots$
\begin{align}
R_n(x) \,\leq\, 1
\quad \text{for all } x\in [0, \tfrac{1}{2}]\,.
\label{eq:Rn<1}
\end{align}
For $n = 1$
\begin{align*}
R_1(x) \,=\, \sqrt{x}(1 - x)^{1 - x + \frac{1}{2}}
\,\leq\,
\sqrt{\frac{1}{2}}(1 - x)^{1 - x + \frac{1}{2}} \,\leq\, 1.
\end{align*}
For $n \geq 1$ suppose $R_n(x) \leq 1$ and consider $R_{n + 1}$.
For $x$ in the interval $[1/(n + 1), 1/2]$
\begin{align*}
R_{n + 1}(x) \,&=\, \sqrt{(n + 1)x}(1 - x)^{(n + 1)(1 - x) + \frac{1}{2}}
\,=\, \sqrt{\frac{n + 1}{n}} \sqrt{n x} (1 - x)^{1 - x} (1 - x)^{n(1 - x) + \frac{1}{2}}\\
&=\, (1 - x)^{1 - x} \sqrt{\frac{n + 1}{n}} R_n(x)
\,\leq\, (1 - x)^{1 - x} \sqrt{\frac{n + 1}{n}}\,.
\end{align*}
To bound this from above we bound one $x$ below by $1/(n + 1)$ 
to give
\begin{align*}
R_{n + 1}(x) \,&\leq\, \left(1 - \frac{1}{n + 1}\right)^{1 - x}\sqrt{\frac{n + 1}{n}}
\,=\, \left(\frac{n}{n + 1}\right)^{\frac{1}{2} - x}
\,\leq\, 1\,.
\end{align*}
And for $x \in [0, 1/(n + 1)]$
\begin{align*}
R_{n + 1}(x) \,\leq\, \sqrt{(n + 1)\frac{1}{n + 1}} (1 - x)^{(n + 1)(1 - x) + \frac{1}{2}}
\,=\, (1 - x)^{(n + 1)(1 - x) + \frac{1}{2}}\,\leq\, 1\,.
\end{align*}
Thus, for all $n = 1, 2, 3, \ldots$ and $x \in [0, 1/2]$ we have
$R_n(x) \leq 1$.

Returning to the sum $S_n$, since
\begin{align*}
\frac{k}{n} \in [0, \tfrac{1}{2}]
\quad \text{for all }
k \leq \frac{n}{2}\,,
\quad \text{and} \quad
R_n\left( \tfrac{k}{n}\right) \,=\, \sqrt{k} \left(1 - \frac{k}{n}\right)^{n - k + \frac{1}{2}}
\end{align*}
we have,  by \eqref{eq:Rn<1},
\begin{align*}
S_n(\epsilon) \,&\leq\,
2\left(\frac{e^2}{\sqrt{2\pi}} \right)^\epsilon
\sum_{k = 1}^{\lfloor\frac{n}{2}\rfloor} \frac{R_n\left(\frac{k}{n}\right)^\epsilon}{(2^\epsilon)^k}
\,\leq\,
2\left(\frac{e^2}{\sqrt{2\pi}} \right)^\epsilon
\sum_{k = 1}^{\lfloor\frac{n}{2}\rfloor} \frac{1}{(2^\epsilon)^k}\\
&\leq\,
2\left(\frac{e^2}{\sqrt{2\pi}} \right)^\epsilon
\sum_{k = 1}^\infty \frac{1}{(2^\epsilon)^k}
\,=\,
\frac{2^{1 - \epsilon}}{1 - 2^{-\epsilon}}
\left(\frac{e^2}{\sqrt{2\pi}} \right)^\epsilon
\,\eqqcolon\, C_\epsilon\,,
\end{align*}
where we used the formula for the sum of a geometric series.
\end{proof}

Lemma~\ref{lem:dlambda} below gives the bounds on the derivatives of
$\lambda_1$ and $u_1$ required for our QMC error analysis. We prove the
bounds for higher order mixed derivatives, which will be written in
multi-index notation. Let $\bsnu = (\nu_j)_{j \in \N}$, with $\nu_j \in \N
\cup \{0\}$, be a multi-index with only finitely many non-zero entries and
define $|\bsnu| \coloneqq \sum_{j \geq 1} \nu_j$. 
We call such a multi-index \emph{admissible}, and
let $\indx$ denote the set of all admissible multi-indexes. We will use
$\pdy{\bsnu}$ to denote the mixed partial derivative where the element
$\nu_j$ is the order of the derivative with respect to $y_j$. Operations
between multi-indices are handled component wise. Thus, for $\bsm =
(m_j)_{j \in \N}, \bsnu = (\nu_j)_{j \in \N}$ we use the following
notation: $\bsnu! = \prod_{j \geq 1} \nu_j!$; $\bsnu - \bsm \coloneqq (\nu_j - m_j)_{j \in \N}$;
$\bsm \leq \bsnu$ if $m_j \leq \nu_j$ for all $j \in \N$; $\bsm < \bsnu$ if 
$\bsm \leq \bsnu$ and $\bsm \neq \bsnu$; 
$\binom{\bsnu}{\bsm} \coloneqq \prod_{j \in \N} \binom{\nu_j}{m_j}$;
and for $\bsalpha = (\alpha_j)_{j \in \N} \in \ell^\infty$ let $\bsalpha^\bsnu \coloneqq \prod_{j \in \N} \alpha_j^{\nu_j}$.
For $j \in \N$, the $j$th unit multi-index is denoted by $\bse_j$, that is, $\bse_j$ is 1 in the $j$th
position and 0 everywhere else.

Since the coefficients $a(\bsx,\bsy)$ and $b(\bsx,\bsy)$ in
\eqref{eq:a_general} are linear in the parameter~$\bsy$, their
derivatives are (suppressing the $\bsx$, $\bsy$ dependence below)
\begin{align}
\label{eq:da}
\pdy{\bsnu} a \,&=\, \begin{cases}
a & \text{if } \bsnu = \bs0\, ,\\
a_j & \text{if } \bsnu = \bse_j\, ,\\
0 & \text{otherwise,}
\end{cases}
\quad\text{and}\quad
\pdy{\bsnu} b \,=\, \begin{cases}
b & \text{if } \bsnu = \bs0\, ,\\
b_j & \text{if } \bsnu = \bse_j\, ,\\
0 & \text{otherwise.}
\end{cases}
\end{align}

\begin{Lemma}\label{lem:dlambda}
Let $\epsilon \in (0, 1)$, $\bsnu \in \indx$ be a multi-index, and
suppose that Assumption~A\ref{asm:coeff} holds. 
Then for all $\bsy \in U$ the corresponding derivative of the smallest eigenvalue of 
\eqref{eq:varevp} is bounded by
\begin{align}
\label{eq:dlambda}
|\pdy{\bsnu}\lambda_1(\bsy)| \,\leq\, \lambdabar\,
(|\bsnu|!)^{1 + \epsilon} \, \bsbeta^\bsnu \,,
\end{align}
and the norm of the derivative of the corresponding eigenfunction is
similarly bounded by
\begin{align}
\label{eq:du}
\nrm{\pdy{\bsnu}u_1(\bsy)}{V} \,\leq\, \ubar \,
(|\bsnu|!)^{1 + \epsilon} \, \bsbeta^\bsnu \,,
\end{align}
where $\lambdabar$ and $\ubar$ are defined in \eqref{eq:lambda_bnd}
and \eqref{eq:u_bnd}, respectively.
The sequence $\bsbeta = (\beta_j)_{j \in \N}$ is defined by
\begin{align}
\label{eq:beta}
\beta_j \,\coloneqq\, 
C_\bsbeta\,\max\left(\nrm{a_j}{L^\infty(D)},\nrm{b_j}{L^\infty(D)}\right) \,,
\end{align}
with $C_\bsbeta > 0$ independent of $\bsy$, given by 
\begin{align} \label{eq:Cbeta}
 C_\bsbeta \,\coloneqq\,
 \frac{1}{\Cgap}\frac{\amin^2\,\lambdabar}{\amax^2 \,\lambdaunder}
\left(\frac{3\lambdabar}{2\lambdaunder}C_\epsilon + 1\right)\,,
\end{align}
where $\Cgap$ is as in Lemma~\ref{lem:coerc}, and $C_\epsilon$ is as in Lemma~\ref{lem:S_n}.
\end{Lemma}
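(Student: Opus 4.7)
The natural strategy is a strong induction on $n = |\bsnu|$, proving the two bounds \eqref{eq:dlambda} and \eqref{eq:du} simultaneously. The base case $n=0$ is immediate from the a priori bounds $\lambda_1(\bsy) \leq \lambdabar$ in \eqref{eq:lambda_bnd} and $\nrm{u_1(\bsy)}{V} \leq \ubar$ in \eqref{eq:u_bnd}. For the inductive step, the plan is to differentiate the parametric variational equation $\calA(\bsy; u_1, v) = \lambda_1 \calM(u_1, v)$ using the Leibniz rule, then exploit that the coefficients are affine in $\bsy$ so that by \eqref{eq:da} only $\pdy{\bs0}\calA$ and $\pdy{\bse_j}\calA$ are nonzero. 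This yields the identity
\begin{equation*}
\Ash{\lambda_1(\bsy)}\!\bigl(\bsy; \pdy{\bsnu}u_1, v\bigr)
\,=\, -\sum_{j:\,\nu_j \geq 1} \nu_j\, \calA'_j\!\bigl(\pdy{\bsnu - \bse_j}u_1, v\bigr)
 + \sum_{\bs0 < \bsm \leq \bsnu} \binom{\bsnu}{\bsm} \pdy{\bsm}\lambda_1 \, \calM\!\bigl(\pdy{\bsnu - \bsm}u_1, v\bigr),
\end{equation*}
where $\calA'_j(w,v) := \int_D (a_j \nabla w \cdot \nabla v + b_j w v)\,\rd\bsx$, valid for all $v \in V$.

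To obtain the bound on $\pdy{\bsnu}\lambda_1$, I would test with $v = u_1$. By symmetry of $\calA$ and the eigenvalue equation, $\Ash{\lambda_1}(\bsy; w, u_1) = 0$ for every $w \in V$, so the left-hand side vanishes. Isolating the $\bsm = \bsnu$ contribution (and using $\nrm{u_1}{\calM}=1$) gives an explicit formula for $\pdy{\bsnu}\lambda_1$ in terms only of \emph{strictly lower-order} derivatives of $\lambda_1$ and $u_1$, which are controlled by the induction hypothesis together with $|\calA'_j(w,u_1)| \lesssim \max(\nrm{a_j}{L^\infty}, \nrm{b_j}{L^\infty}) \nrm{w}{V}\nrm{u_1}{V}$ and $|\calM(w,u_1)| \leq \amax \nrm{w}{L^2} \leq C\nrm{w}{V}$.

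For the bound on $\pdy{\bsnu}u_1$, the key is the $\calM$-orthogonal decomposition $\pdy{\bsnu}u_1 = \calM(\pdy{\bsnu}u_1,u_1)\,u_1 + w_\bsnu$ with $w_\bsnu \in E(\bsy,\lambda_1(\bsy))^\perp$. The scalar $\calM(\pdy{\bsnu}u_1,u_1)$ is controlled by differentiating the normalisation $\calM(u_1,u_1) = 1$, which yields
\begin{equation*}
\calM(\pdy{\bsnu}u_1, u_1) \,=\, -\tfrac{1}{2}\sum_{\bs0 < \bsm < \bsnu}\binom{\bsnu}{\bsm}\calM\!\bigl(\pdy{\bsm}u_1, \pdy{\bsnu-\bsm}u_1\bigr),
\end{equation*}
again in terms of strictly lower-order derivatives. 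For $w_\bsnu$, apply Lemma~\ref{lem:coerc} to get $\Cgap\nrm{w_\bsnu}{V}^2 \leq \Ash{\lambda_1}(\bsy; w_\bsnu, w_\bsnu) = \Ash{\lambda_1}(\bsy; \pdy{\bsnu}u_1, w_\bsnu)$ (since $\Ash{\lambda_1}(\bsy; u_1, \cdot) \equiv 0$), then substitute the Leibniz identity above. In the resulting sum, the $\bsm = \bsnu$ term drops out because $w_\bsnu \perp_\calM u_1$, so only lower-order contributions remain, and dividing by $\nrm{w_\bsnu}{V}$ gives the desired $V$-norm bound.

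The main technical hurdle will be the combinatorial bookkeeping: the convolution sums $\sum_{\bs0 < \bsm < \bsnu} \binom{\bsnu}{\bsm} (|\bsm|!)^{1+\epsilon} (|\bsnu-\bsm|!)^{1+\epsilon} \bsbeta^\bsnu$ produced by the induction hypothesis must collapse back into $(|\bsnu|!)^{1+\epsilon} \bsbeta^\bsnu$ without spawning an unbounded prefactor. Grouping $\bsm$ by $|\bsm| = j$ via the identity $\sum_{|\bsm|=j,\,\bsm \leq \bsnu}\binom{\bsnu}{\bsm} = \binom{|\bsnu|}{j}$ turns the sum into $(|\bsnu|!)^{1+\epsilon}\sum_{j=1}^{|\bsnu|-1}\binom{|\bsnu|}{j}^{-\epsilon}$, and the uniform estimate $S_n(\epsilon) \leq C_\epsilon$ from Lemma~\ref{lem:S_n} is precisely what closes the induction; the role of the loss $\epsilon > 0$ in the factorial exponent is exactly to make this geometric summation finite. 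The remaining task is arithmetic: verify that the concrete constant $C_\bsbeta$ defined in \eqref{eq:Cbeta}, built from $\Cgap$, $\lambdabar/\lambdaunder$, $\amin$, $\amax$ and $C_\epsilon$, is large enough so that the accumulated prefactors from both steps of the induction are absorbed into $\lambdabar \bsbeta^\bsnu$ and $\ubar \bsbeta^\bsnu$ respectively, uniformly in $\bsy$.
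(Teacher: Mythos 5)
Your outline reproduces the paper's proof essentially step for step: the same Leibniz differentiation of the variational identity (your rearranged identity with $\Ash{\lambda_1(\bsy)}(\bsy;\pdy{\bsnu}u_1,v)$ on the left is exactly the paper's \eqref{eq:dvarform}), the same choice $v=u_1$ to extract $\pdy{\bsnu}\lambda_1$, the same $\calM$-orthogonal splitting of $\pdy{\bsnu}u_1$ combined with the coercivity of Lemma~\ref{lem:coerc} on $E(\bsy,\lambda_1(\bsy))^\perp$ and the differentiated normalisation for the parallel component, and the same collapse of the convolution sum via $\sum_{|\bsm|=j,\,\bsm\le\bsnu}\binom{\bsnu}{\bsm}=\binom{|\bsnu|}{j}$ and Lemma~\ref{lem:S_n}.

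The one place where the plan, as written, would fail is the final ``arithmetic'': you cannot run the induction with the constants $\lambdabar$ and $\ubar$ themselves and absorb the prefactors by enlarging $C_\bsbeta$. The recursions contain terms that are \emph{quadratic} in the induction hypotheses --- e.g.\ the convolution $\sum_{\bs0\neq\bsm<\bsnu}\binom{\bsnu}{\bsm}\,|\pdy{\bsm}\lambda_1|\,\nrm{\pdy{\bsnu-\bsm}u_1}{V}$ contributes a prefactor proportional to the product of the $\lambda$-constant and the $u$-constant times $C_\epsilon$. With those constants frozen at $\lambdabar$ and $\ubar$, this prefactor is determined entirely by the problem data and does not shrink as $C_\bsbeta$ grows, so the inductive step need not close. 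The fix, which is what the paper does, is to prove the sharper intermediate bounds $|\pdy{\bsnu}\lambda_1|\le C_1(|\bsnu|!)^{1+\epsilon}\bsbeta^\bsnu$ and $\nrm{\pdy{\bsnu}u_1}{V}\le C_2(|\bsnu|!)^{1+\epsilon}\bsbeta^\bsnu$ for $\bsnu\neq\bs0$ with auxiliary constants $C_1,C_2\propto C_\bsbeta^{-1}$: then the quadratic terms scale like $C_\bsbeta^{-2}$ against a target of order $C_\bsbeta^{-1}$, the single-derivative terms carry an explicit $C_\bsbeta^{-1}$ from the definition of $\bsbeta$ in \eqref{eq:beta}, and the choice \eqref{eq:Cbeta} makes both bracketed prefactors at most $1$. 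The base case $|\bsnu|=1$ must also be verified with these tighter constants (it works because the convolution sums are then empty), and one checks $C_1\le\lambdabar$, $C_2\le\ubar$ at the end to recover \eqref{eq:dlambda} and \eqref{eq:du}. With this adjustment your argument is exactly the paper's.
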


\begin{proof}
For $\bsnu = \bs0$ the bounds
\eqref{eq:dlambda} and \eqref{eq:du} clearly hold by
\eqref{eq:lambda_bnd} and \eqref{eq:u_bnd}, respectively.

For $\bsnu \ne \bs0$, we will prove the two bounds by induction
on~$|\bsnu|$.
To this end, we first obtain recursive bounds by differentiating the
variational form \eqref{eq:varform} with respect to the stochastic
parameters $\bsy\in U$, see \eqref{eq:dl1_rec} and \eqref{eq:du1_rec},
which will then be used to prove \eqref{eq:dlambda} and \eqref{eq:du}
inductively.
From \cite{AS12}, we know that simple eigenpairs of
\eqref{eq:varevp} are analytic in $\bsy$, so the partial derivatives
$\pdy{\bsnu}\lambda_1$, $\pdy{\bsnu} u_1$ exist and we further have
$\pdy{\bsnu} u_1 \in V$. Hence, we can differentiate \eqref{eq:varform}
with $\lambda = \lambda_1$ and $u = u_1$ using the Leibniz general product
rule to obtain the following formula, which is true for all $v \in V$,
\begin{align}\label{eq:dvarform}
\nonumber\sum_{\bsm \leq \bsnu} \binom{\bsnu}{\bsm} \bigg(
&-\, \left(\pdy{\bsm}\lambda_1(\bsy)\right) \int_D  c(\bsx)\left(\pdy{\bsnu - \bsm} u_1(\bsx, \bsy)\right) v(\bsx) \,\rd \bsx\\
\nonumber&+\,\int_D \left(\pdy{\bsm}a(\bsx, \bsy)\right)
\nabla \left(\pdy{\bsnu - \bsm}u_1(\bsx, \bsy)\right) \cdot \nabla v(\bsx) \,\rd \bsx \\
&+\,\int_D \left(\pdy{\bsm}b(\bsx, \bsy)\right)\left(\pdy{\bsnu - \bsm}u_1\right)
v(\bsx)\,\rd \bsx\bigg)  \,=\, 0\, .
\end{align}

Henceforth, we consider $\bsy \in U$ to be fixed and will suppress the dependence
of $a(\bsx, \bsy)$, $b(\bsx, \bsy)$, $c(\bsx)$, $\lambda_1(\bsy)$, $u_1(\bsx, \bsy)$,
and their respective derivatives, on $\bsx$ and $\bsy$.

To obtain a bound on the derivatives of the eigenvalue, we take $v =
u_1$ in \eqref{eq:dvarform}. In this case, the $\bsm = \bs0$ term
vanishes since \eqref{eq:varform} is satisfied for $\lambda = \lambda_1$ and 
$u = u_1$ with $\pdy{\bsnu} u_1 \in V$ as a test function. Separating out the
$\pdy{\bsnu}\lambda_1$ term and using $\nrm{u_1}{\calM} = 1$ gives
\begin{align}
\label{eq:dlam_exact}
\pdy{\bsnu}\lambda_1 \,=\,
&- \sum_{\bs0 \neq \bsm < \bsnu} \binom{\bsnu}{\bsm} (\pdy{\bsm}\lambda_1)
\int_D c \left(\pdy{\bsnu - \bsm} u_1\right) u_1
\nonumber\\
&+ \sum_{j = 1}^\infty \nu_j \bigg(\int_D a_j \nabla \left( \pdy{\bsnu - \bse_j}u_1\right)\cdot \nabla u_1
\,+\,\int_D b_j \left(\pdy{\bsnu - \bse_j}u_1\right) u_1
 \bigg)\,,
\end{align}
since the terms involving higher-order derivatives of the
coefficients are 0 (see \eqref{eq:da}).
Taking the absolute value then applying the triangle and Cauchy--Schwarz  inequalities
gives the upper bound
\begin{align*}
&|\pdy{\bsnu}\lambda_1| \,\leq\,
\sum_{\bs0 \neq \bsm < \bsnu} \binom{\bsnu}{\bsm}
|\pdy{\bsm}\lambda_1|
\nrm{\pdy{\bsnu - \bsm}u_1}{\calM}\nrm{u_1}{\calM}  \\
&+
\sum_{j = 1}^\infty \nu_j \left(
\nrm{a_j}{L^\infty(D)}\nrm{\pdy{\bsnu - \bse_j}u_1}{V} \nrm{u_1}{V}
 + \nrm{b_j}{L^\infty(D)}
 \nrm{\pdy{\bsnu - \bse_j}u_1}{L^2(D)} \nrm{u_1}{L^2(D)}\right)
\,.
\end{align*}
Then, by the equivalence of the norms $\nrm{\cdot}{\calM}$ and
$\nrm{\cdot}{L^2(D)}$ in \eqref{eq:nrms_equiv}, the Poincar\'e
inequality \eqref{eq:poin}, the upper bound \eqref{eq:u_bnd} on
$\nrm{u_1}{V}$, and the normalisation of $u_1$, we have
\begin{align*}
|\pdy{\bsnu}\lambda_1| \,&\leq\,
\sqrt{\frac{\amax}{\evalueLap}} \sum_{\bs0 \neq \bsm < \bsnu} \binom{\bsnu}{\bsm}
|\pdy{\bsm}\lambda_1|
\nrm{\pdy{\bsnu - \bsm}u_1}{V} \\
&+ \ubar
\sum_{j = 1}^\infty \nu_j \left(
\nrm{a_j}{L^\infty(D)}
 + \tfrac{1}{\evalueLap} \nrm{b_j}{L^\infty(D)}\right)
 \nrm{\pdy{\bsnu - \bse_j}u_1}{V}
\,.
\end{align*}
Defining $\beta_j$ as in \eqref{eq:beta} but leaving $C_\bsbeta>0$ to
be specified later, we obtain
\begin{align}
\label{eq:dl1_rec}
|\pdy{\bsnu}\lambda_1| \,&\leq\,
\sqrt{\frac{\amax}{\evalueLap}} \sum_{\bs0 \neq \bsm < \bsnu} \binom{\bsnu}{\bsm}
|\pdy{\bsm}\lambda_1|
\nrm{\pdy{\bsnu - \bsm}u_1}{V} 
+ \ubar
\left(1 + \tfrac{1}{\evalueLap}\right)
\sum_{j = 1}^\infty \nu_j \frac{\beta_j}{C_\bsbeta}
 \nrm{\pdy{\bsnu - \bse_j}u_1}{V}\,,
\end{align}
which depends
only on the lower order derivatives of both $\lambda_1$ and $u_1$.

Substituting $v = \pdy{\bsnu}u_1$ into \eqref{eq:dvarform} to 
obtain a similar bound on the derivatives of the eigenfunction will not work, 
because in \eqref{eq:dvarform} the bilinear
form acting on $\pdy{\bsnu}u_1$ (exactly $\calA^{\mathrm{sh}}_{\lambda_1}$
from Lemma~\ref{lem:coerc}) is not coercive on the whole  domain $V \times
V$. The 
way around this is to expand $\pdy{\bsnu}u_1$ in the eigenbasis and then
utilise the estimate in Lemma~\ref{lem:coerc}. We write $\pdy{\bsnu}u_1$ as
\begin{align}
\label{eq:du1decomp}
\pdy{\bsnu}u_1 \,=\, \sum_{k \in \N} \calM( \pdy{\bsnu}u_1, u_k) u_k
\,=\, \calM( \pdy{\bsnu}u_1, u_1) u_1 + \tilde{v}\,,
\end{align}
so that $\tilde{v} \in E(\bsy, \lambda_1(\bsy))^\perp$ is the $\calM$-orthogonal
projection of $\pdy{\bsnu}u_1$ onto  $E(\bsy, \lambda_1(\bsy))^\perp$.
Applying the triangle inequality to this decomposition and then using
\eqref{eq:u_bnd} we can bound the norm by
\begin{align}
\label{eq:du1_normdecomp}
\nrm{\pdy{\bsnu}u_1}{V} \,&\leq\,
\ubar |\calM(\pdy{\bsnu}u_1, u_1) | + \nrm{\tilde{v}}{V}\, .
\end{align}

Hence, it remains to bound $\calM(\pdy{\bsnu} u_1, u_1)$ and
$\nrm{\tilde{v}}{V}$. For the former, since $\calM( u_1, u_1) = 1$ we have
\begin{align*}
0 \,=\, \pdy{\bsnu} \calM(u_1, u_1)
\,=\, \sum_{\bsm \leq \bsnu} \binom{\bsnu}{\bsm} \calM( \pdy{\bsm} u_1, \pdy{\bsnu - \bsm} u_1) \, .
\end{align*}
By separating out the $\bsm = \bs0$ and $\bsm = \bsnu$ terms, which are equal by symmetry,
we obtain
\begin{align}
\label{eq:<du,u>}
|\calM( \pdy{\bsnu} u_1, u_1)|
\,&=\,
\left|-\frac{1}{2} \sum_{\bs0 \neq \bsm < \bsnu} \binom{\bsnu}{\bsm}
\calM( \pdy{\bsm} u_1, \pdy{\bsnu - \bsm} u_1)\right| \\
&\leq\, \frac{\amax}{2\evalueLap}\sum_{\bs0 \neq \bsm < \bsnu} \binom{\bsnu}{\bsm} \nrm{\pdy{\bsm}u_1}{V} \nrm{\pdy{\bsnu - \bsm}u_1}{V},
\end{align}
where we used the Cauchy--Schwarz inequality, the norm equivalence \eqref{eq:nrms_equiv}
and then the Poincar\'e inequality \eqref{eq:poin} to obtain $V$-norms.

For the $V$-norm of $\tilde{v}$, we let $v = \tilde{v}$ in \eqref{eq:dvarform}
and separate out the $\bsm = \bs0$ term to give
\begin{align}
\label{eq:perpform}
\nonumber
\calA\left(\pdy{\bsnu}u_1, \tilde{v}\right)
- &\lambda_1\calM\left(\pdy{\bsnu}u_1, \tilde{v}\right)
\,=\, \sum_{\bs0 \neq \bsm < \bsnu} \binom{\bsnu}{\bsm} (\pdy{\bsm}\lambda_1) \int_D c\, (\pdy{\bsnu - \bsm} u_1) \tilde{v} \\
 &- \sum_{j = 1}^\infty \nu_j\bigg(\int_D a_j \nabla (\pdy{\bsnu - \bse_j}u_1) \cdot \nabla \tilde{v}
+ \int_D b_j (\pdy{\bsnu - \bse_j}u_1)\tilde{v} \bigg)
 \, ,
\end{align}
where the $\bsm = \bsnu$ term vanishes from the right-hand side since
$\tilde{v}$ is orthogonal to $u_1$. Decomposing $\pdy{\bsnu}u_1$ as in
\eqref{eq:du1decomp} and then expanding, the left-hand side of
\eqref{eq:perpform} becomes
\begin{align*}
\text{LHS of \eqref{eq:perpform}} \,&=\,
\calM( \pdy{\bsnu} u_1, u_1)
\big(\calA(u_1, \tilde{v}) - \lambda_1\calM( u_1, \tilde{v})\big)
+ \calA(\tilde{v}, \tilde{v}) - \lambda_1\calM(\tilde{v}, \tilde{v}) \\
&=\, \calA(\tilde{v}, \tilde{v}) - \lambda_1\calM(\tilde{v}, \tilde{v})
\,\geq\,C_{\rm gap} \nrm{\tilde{v}}{V}^2\,,
\end{align*}
where the first term on the first line is 0 by \eqref{eq:varform} with
$\tilde{v}$ as a test function. The lower bound follows by the coercivity
estimate \eqref{eq:Ashcoerc} in Lemma \ref{lem:coerc}, since $\tilde{v}
\in E(\bsy, \lambda_1(\bsy))^\perp$.

The right-hand side of \eqref{eq:perpform} can be bounded from above
as for \eqref{eq:dlam_exact} to obtain
\begin{align*}
C_{\rm gap} 
\nrm{\tilde{v}}{V}^2
&\leq \,   \frac{\amax}{\evalueLap}\sum_{\bs0 \neq \bsm < \bsnu} \binom{\bsnu}{\bsm}
|\pdy{\bsm} \lambda_1| \nrm{\pdy{\bsnu - \bsm} u_1}{V}\nrm{\tilde{v}}{V}\\
&+\, \sum_{j = 1}^\infty \nu_j
\left(\nrm{a_j}{L^\infty(D)} +  \tfrac{1}{\evalueLap}\nrm{b_j}{L^\infty(D)}\right)
\|\pdy{\bsnu - \bse_j}u_1\|_{V}\nrm{\tilde{v}}{V}
 \,.
\end{align*}
Dividing through by $C_{\rm gap}\nrm{\tilde{v}}{V}$ and
using the definition of $\beta_j$ in \eqref{eq:beta}, again leaving
$C_\bsbeta>0$ to be specified later, we obtain
\begin{align}
\label{eq:vtilde_Vnorm}
\nonumber \nrm{\tilde{v}}{V} \,\leq\, \frac{1}{C_{\rm gap}}
\Bigg(&\frac{\amax}{\evalueLap} \sum_{\bs0 \neq \bsm < \bsnu} \binom{\bsnu}{\bsm}
|\pdy{\bsm}\lambda_1| \nrm{\pdy{\bsnu - \bsm}u_1}{V} \\
&+\,
\left( 1 + \tfrac{1}{\evalueLap} \right)
\sum_{j = 1}^\infty \nu_j \frac{\beta_j}{C_\bsbeta} \nrm{\pdy{\bsnu - \bse_j}u_1}{V}\Bigg) \, .
\end{align}

Substituting the two bounds \eqref{eq:<du,u>} and
\eqref{eq:vtilde_Vnorm} into \eqref{eq:du1_normdecomp}, the norm of
the derivative of the eigenfunction is bounded above by
\begin{align}
\label{eq:du1_rec}
\nonumber\nrm{\pdy{\bsnu}u_1}{V} \,\leq\,
&
\ubar \frac{\amax}{2 \evalueLap}
\sum_{\bs0 \neq \bsm < \bsnu} \binom{\bsnu}{\bsm}
 \|\pdy{\bsm}u_1\|_V \|\pdy{\bsnu - \bsm}u_1\|_V  \\
\nonumber &+\,
\frac{\amax}{\evalueLap C_{\rm gap}}
\sum_{\bs0 \neq \bsm < \bsnu} \binom{\bsnu}{\bsm} |\pdy{\bsm}\lambda_1| \nrm{\pdy{\bsnu - \bsm}u_1}{V}\\
&+\, \frac{1}{C_{\rm gap}}\left(1 + \tfrac{1}{\evalueLap}\right)
\sum_{j = 1}^\infty \nu_j \frac{\beta_j}{C_\bsbeta} \nrm{\pdy{\bsnu - \bse_j}u_1}{V} \,.
\end{align}

We are now ready to prove the bounds \eqref{eq:dlambda} and
\eqref{eq:du} by induction. To avoid any blow-up in the inductive step we require
tighter constants than $\lambdabar$ and $\ubar$.
Thus we proceed to prove that, for $\bsnu \ne \bs0$,
\begin{align} \label{eq:dl1}
|\pdy{\bsnu}\lambda_1(\bsy)| \,&\leq\, C_1 \,
(|\bsnu|!)^{1 + \epsilon} \, \bsbeta^\bsnu \,,
\quad \text{and}\\
\label{eq:du1}
\nrm{\pdy{\bsnu}u_1(\bsy)}{V} \,&\leq\, C_2\,
(|\bsnu|!)^{1 + \epsilon} \, \bsbeta^\bsnu \,,
\end{align}
where
\begin{align*}
  C_1 &\,\coloneqq\,
   \frac{\lambdabar}{\amin}  
  \left(1 + \tfrac{1}{\evalueLap}\right)\frac{1}{C_\bsbeta}\,,
 \quad\mbox{and}\\
  C_2 &\,\coloneqq\,
   \frac{\ubar}{\Cgap}
 \left(1 + \tfrac{1}{\evalueLap}\right)\frac{1}{C_\bsbeta}\,,
\end{align*}
with $C_\bsbeta>0$ still to be specified later to ensure that $C_1\le
\lambdabar$ and $C_2\le \ubar$.

Since the bounds \eqref{eq:dl1_rec} and
\eqref{eq:du1_rec} are true for all $\bsnu\ne\bszero$, and these bounds do
not involve the $\bsnu=\bs0$ cases, we will use them to establish the base
step of the induction $|\bsnu| = 1$. Letting $\bsnu = \bse_i$ in
\eqref{eq:dl1_rec} and \eqref{eq:du1_rec},
and then using the bounds in \eqref{eq:lambda_bnd} and \eqref{eq:u_bnd} gives
\begin{align*}
 |\pdy{\bse_i}\lambda_1| \,&\leq\,
 \ubar^2
 \left(1 + \tfrac{1}{\evalueLap}\right)\frac{\beta_i}{C_\bsbeta}
  \,=\,\frac{\lambdabar}{\amin}
 \left(1 + \tfrac{1}{\evalueLap}\right)\frac{\beta_i}{C_\bsbeta}\,,
 \quad\mbox{and}\\
 \nrm{\pdy{\bse_i}u_1}{V} \,&\leq\,
\frac{\ubar}{\Cgap}
\left(1 + \tfrac{1}{\evalueLap}\right)\frac{\beta_i}{C_\bsbeta}\,,
\end{align*}
as required.

For the inductive step for the eigenvalue derivative bound, suppose
that $|\bsnu| \ge 2$ and that the bounds \eqref{eq:dl1} and
\eqref{eq:du1} hold for all multi-indices of order $< |\bsnu|$.
Substituting the induction assumptions \eqref{eq:dl1} and \eqref{eq:du1} into \eqref{eq:dl1_rec}
and then factoring out $C_1\bsbeta^\bsnu$ gives
\begin{align*}
|\pdy{\bsnu} \lambda_1| \,\leq\,&
\sqrt{\frac{\amax}{\evalueLap}} \sum_{\bs0 \neq \bsm < \bsnu} \binom{\bsnu}{\bsm}
C_1\, (|\bsm|!)^{1 + \epsilon}  \bsbeta^\bsm
\cdot C_2 \, (|\bsnu - \bsm|!)^{1 + \epsilon}
\, \bsbeta^{\bsnu - \bsm} \\
&\;+ \ubar
\left(1 + \tfrac{1}{\evalueLap}\right)
\sum_{j\in \N} \nu_j \frac{\beta_j}{C_\bsbeta}
\cdot C_2 \, [(|\bsnu| - 1)!]^{1 + \epsilon} \,
\bsbeta^{\bsnu - \bse_j} \\
\,\leq\,&
C_1 \bsbeta^\bsnu
\Bigg(
\sqrt{\frac{\amax}{\evalueLap}}C_2
\sum_{\bs0 \neq \bsm < \bsnu} \binom{\bsnu}{\bsm}
(|\bsm|!)^{1 + \epsilon} (|\bsnu - \bsm|!)^{1 + \epsilon}\\
&\;+ \ubar
\left( 1 + \tfrac{1}{\evalueLap}\right)\frac{C_2}{C_1C_\bsbeta}
 |\bsnu| [(|\bsnu| - 1)!]^{1 + \epsilon}
\Bigg)\, .
\end{align*}

Using the identity $\sum_{\bsm \leq \bsnu, |\bsm| = k}
\binom{\bsnu}{\bsm} = \binom{|\bsnu|}{k}$ along with Lemma~\ref{lem:S_n},
we can bound the sum as follows
\begin{align}
\label{eq:sumbnd}
\nonumber
&\sum_{\bs0 \neq \bsm < \bsnu} \binom{\bsnu}{\bsm}
(|\bsm|!)^{1 + \epsilon} (|\bsnu - \bsm|!)^{1 + \epsilon} \nonumber\\
\,=\,&
\sum_{k = 1}^{|\bsnu| - 1} \left[\sum_{\bsm \leq \bsnu, |\bsm| = k}
\binom{\bsnu}{\bsm} \right] (k!)^{1 + \epsilon} [(|\bsnu| - k)!]^{1 + \epsilon} \nonumber\\
\,=\,&
(|\bsnu|!)^{1 + \epsilon} \sum_{k = 1}^{|\bsnu| - 1}
\binom{|\bsnu|}{k}^{-\epsilon}
\,\leq\, C_\epsilon (|\bsnu|!)^{1 + \epsilon}\,.
\end{align}
Substituting this into the bound on $|\pdy{\bsnu} \lambda_1|$ yields
\begin{align*}
|\pdy{\bsnu}\lambda_1| \,\leq\,
&C_1 (|\bsnu|!)^{1 + \epsilon} \, \bsbeta^\bsnu
\bigg[
\sqrt{\frac{\amax}{\evalueLap}}C_2
C_\epsilon
+ \ubar
\left( 1 + \tfrac{1}{\evalueLap}\right)\frac{C_2}{C_1C_\bsbeta}
\bigg]
\,.
\end{align*}
Substituting in the values for $C_1$ and $C_2$, and then using \eqref{eq:lambda_bnd}
and \eqref{eq:u_bnd} the expression in
between the square brackets simplifies to
\begin{align} \label{eq:brackets1}
\frac{1}{C_\bsbeta \Cgap} \frac{\amin^2\,\lambdabar}{\amax^2\,\lambdaunder}
\Bigg( C_\epsilon \sqrt{\frac{\lambdabar}{\lambdaunder}} + 1\Bigg)
\end{align}
and we will later specify $C_\bsbeta$ to ensure that this expression is
bounded by $1$, thus giving the required result \eqref{eq:dl1}.

For the eigenfunction derivative bounds, substituting the
induction hypotheses \eqref{eq:dl1} and \eqref{eq:du1} into
\eqref{eq:du1_rec}
\begin{align*}
\nrm{\pdy{\bsnu}u_1}{V} \,\leq\,&
\frac{\amax}{2\evalueLap}\ubar
\sum_{\bs0 \neq \bsm < \bsnu} \binom{\bsnu}{\bsm}
C_2 (|\bsm|!)^{1 + \epsilon}  \bsbeta^\bsm
\cdot C_2(|\bsnu - \bsm|!)^{1 + \epsilon}  \bsbeta^{\bsnu - \bsm} \,
\\
&+ \, \frac{\amax}{\evalueLap C_{\rm gap}}\sum_{\bs0 \neq \bsm < \bsnu} \binom{\bsnu}{\bsm}
C_1 (|\bsm|!)^{1 + \epsilon}  \bsbeta^\bsm  \,
\cdot C_2\,  (|\bsnu - \bsm|!)^{1 + \epsilon}  \bsbeta^{\bsnu - \bsm}\\
 &+\, \frac{1}{ C_{\rm gap}}
 \left(1 + \tfrac{1}{\evalueLap}\right)\sum_{j = 1}^\infty \nu_j  \, \frac{\beta_j}{C_\bsbeta}  \,
C_2 [(|\bsnu| - 1)!]^{1 + \epsilon}  \bsbeta^{\bsnu - \bse_j}\,.
\end{align*}
Factoring out $C_2\bsbeta^\bsnu$ and using \eqref{eq:sumbnd}
this becomes
\begin{align*}
\nrm{\pdy{\bsnu}u_1}{V}
\,\leq\,&  C_2\,
(|\bsnu|!)^{1 + \epsilon} \bsbeta^\bsnu
\bigg[\frac{\amax}{2\evalueLap}\ubar C_2 C_\epsilon 
+\frac{\amax}{\evalueLap C_{\rm gap}} C_1 C_\epsilon
+ \frac{1}{ C_{\rm gap}}\left(1 + \tfrac{1}{\evalueLap}\right)
\frac{1}{C_\bsbeta}\bigg]\,,
\end{align*}

Substituting in $C_1$ and $C_2$, and then using again \eqref{eq:lambda_bnd} and \eqref{eq:u_bnd}
the expression in between the square brackets simplifies to
\begin{align} \label{eq:brackets2}
\frac{1}{C_\bsbeta \Cgap} \frac{\amin^2\,\lambdabar}{\amax^2 \,\lambdaunder}
\left(\frac{3\lambdabar}{2\lambdaunder}C_\epsilon + 1\right)\,.
\end{align}
We now define $C_\bsbeta$ as in \eqref{eq:Cbeta}, so that the expression in
\eqref{eq:brackets2} is exactly $1$, thus proving the required bound for
the eigenfunction \eqref{eq:du1}, and ensuring also that the expression in
\eqref{eq:brackets1} is bounded by $1$ as required. This completes the
induction proof for \eqref{eq:dl1} and \eqref{eq:du1} for all $\bsnu \ne
\bs0$.

With this definition of $C_\bsbeta$ it can be verified that $C_1 \leq
\lambdabar$ and $C_2 \leq \ubar$ as required. Hence we have also proved
\eqref{eq:dlambda} and \eqref{eq:du} for all $\bsnu \in \indx$.
\end{proof}

\begin{Remark}\label{rem:dlam_h}\normalfont
Since $V_h \subset V$, for $h$ sufficiently small similar results can be proved analogously for the FE approximations,
with the constants
replaced by their FE  counterparts $C_{\bsbeta, h},
\overline{\lambda_{1, h}}, \overline{u_{1, h}}$.
\end{Remark}

\section{Error analysis}
\label{sec:error_analysis}

Since we are only interested in the fundamental eigenpair, to aid in the
notation we drop the subscript~1 and define $(\lambda, u) \coloneqq
(\lambda_1, u_1)$. Henceforth, with a slight abuse of notation, we
will use combinations of the subscripts $s, h, N$ to denote, respectively,
truncating the stochastic dimension to $s$ variables, a FE approximation
with meshwidth $h$ and a lattice rule approximation with $N$ points. Also,
for the dimension truncation we will denote the truncated parameter vector
by $\bsy_s \coloneqq (y_1, y_2, \ldots, y_s)$.

Clearly, if the functions $(a_j)_{j \geq 1}, (b_j)_{j \geq 1}$ satisfy
Assumption~A\ref{asm:coeff}.\ref{itm:summable}
then the sequence $\bsbeta$ defined by \eqref{eq:beta} is summable with the same $p$. 
Also, we will henceforth assume that $\bsbeta$ is ordered such that 
$\beta_1 \geq \beta_2 \geq \cdots$.

\subsection{Dimension truncation error}
\label{sec:dim} 
Here we present bounds on the error of the truncated eigenvalue,
$\lambda_s(\bsy_s) \coloneqq \lambda(\bsy_s; \bs0)$,
and the truncated eigenfunction, $u_s(\bsy) \coloneqq u_s(\bsy_s; \bs0)$, 
for both a given $\bsy$ (the strong error) and the expected value (the weak error).
To prove these estimates we will make extensive use of Taylor series
expansions in the variables $(y_j)_{j > s}$ about $\bs0$ 
with integral remainders (as in \cite{Char12}). 
However, motivated by the splitting strategy in \cite{Gant18},
we use a higher order Taylor series expansion to obtain the same rate as
in \cite{Gant18}, which is an extra order of convergence for the weak error when compared with \cite{Char12,KSS12}.

\begin{theorem}
\label{thm:trunc}
Suppose that Assumption~A\ref{asm:coeff} holds with $p \in (0, 1)$. 
There exists constants $C_1, C_2, C_3, C_4 > 0$
such that if $s \in \N$ is sufficiently large,
then for all $\bsy \in U$ the strong truncation error of the minimal eigenpair is bounded by
\begin{align}
\label{eq:lam_s_strong}
|\lambda(\bsy) - \lambda_s(\bsy_s)|
\,&\leq\, C_1  s^{-1/p + 1}
\,,\\
\label{eq:u_s_strong}
\nrm{u(\bsy) - u_s(\bsy_s)}{V}
\,&\leq\, 
C_2 s^{-1/p + 1}
\,.
\end{align}
The weak truncation error is bounded by
\begin{align}
\label{eq:lam_s_weak}
\left|\bbE_\bsy\left[\lambda - \lambda_s\right]\right|
\,&\leq\, 
C_3 s^{-2/p + 1}
\,,
\end{align}
and for $\calG \in V^*$
\begin{align}
\label{eq:u_s_weak}
\left|\bbE_\bsy\left[\calG(u) - \calG(u_s)\right]\right|
\,&\leq\, 
C_4 s^{-2/p + 1}
\,.
\end{align}
Here, $C_1, C_2, C_3, C_4$ are independent of $\bsy$ and $s$.
\end{theorem}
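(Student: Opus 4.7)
The plan is to derive all four bounds from Taylor expansions of the fundamental eigenpair in the tail coordinates $(y_j)_{j>s}$ about $\bs0$, combined with the derivative estimates in Lemma~\ref{lem:dlambda} and the standard summability decay for decreasingly-ordered $\ell^p$ sequences.

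For the strong bounds \eqref{eq:lam_s_strong}--\eqref{eq:u_s_strong}, a first-order Taylor expansion with integral remainder along the tail direction will suffice. Writing $\bsy_{>s}\coloneqq(y_j)_{j>s}$ and using $\lambda_s(\bsy_s)=\lambda(\bsy_s;\bs0)$,
\[
\lambda(\bsy)-\lambda_s(\bsy_s)\;=\;\int_0^1\sum_{j>s}y_j\,(\pdy{\bse_j}\lambda)(\bsy_s;t\bsy_{>s})\,\rd t,
\]
and similarly for $u$ with $V$-valued integrand. The series is absolutely convergent because Lemma~\ref{lem:dlambda} applied with $\bsnu=\bse_j$ gives $|\pdy{\bse_j}\lambda|\le\lambdabar\,\beta_j$ and $\|\pdy{\bse_j}u\|_V\le\ubar\,\beta_j$, uniformly in $\bsy\in U$. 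Combined with $|y_j|\le\tfrac12$ this yields $|\lambda-\lambda_s|\le\tfrac12\lambdabar\sum_{j>s}\beta_j$ and the analogous estimate for $\|u-u_s\|_V$. Splitting $\beta_j=\beta_j^p\beta_j^{1-p}$ and using $\beta_j\le\beta_{s+1}$ for $j>s$ together with $\bsbeta\in\ell^p$ delivers the Stechkin-type decay $\sum_{j>s}\beta_j\le\beta_{s+1}^{1-p}\|\bsbeta\|_{\ell^p}^p\le C\,s^{-(1-p)/p}=C\,s^{-1/p+1}$, and hence \eqref{eq:lam_s_strong}--\eqref{eq:u_s_strong}.

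For the weak bounds \eqref{eq:lam_s_weak}--\eqref{eq:u_s_weak} the first-order expansion is not enough, so I would adopt the one-variable-at-a-time splitting of \cite{Gant18}. Define $\lambda_{[N]}(\bsy)\coloneqq\lambda(y_1,\ldots,y_N,0,0,\ldots)$ for $N\ge s$, noting that $\lambda_{[N]}\to\lambda$ in $L^\infty(U)$ by the strong bound just proved, and write the telescoping identity
\[
\lambda(\bsy)-\lambda_s(\bsy_s)\;=\;\sum_{j>s}\bigl(\lambda_{[j]}(\bsy)-\lambda_{[j-1]}(\bsy)\bigr).
\]
Each summand depends on $y_j$ only through a single slot and vanishes at $y_j=0$, so a \emph{second}-order scalar Taylor expansion in the single variable $y_j$ gives
\[
\lambda_{[j]}(\bsy)-\lambda_{[j-1]}(\bsy)\;=\;y_j\,(\pdy{\bse_j}\lambda)(\ldots,0,\ldots)\;+\;y_j^2\int_0^1(1-t)\,(\pdy{2\bse_j}\lambda)(\ldots,ty_j,\ldots)\,\rd t.
\]
The first term, viewed as a function of the remaining variables, does not depend on $y_j$, so independence of the $y_k$'s together with $\bbE[y_j]=0$ force its expectation to vanish. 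For the remainder, Lemma~\ref{lem:dlambda} with $\bsnu=2\bse_j$ supplies $|\pdy{2\bse_j}\lambda|\le\lambdabar(2!)^{1+\epsilon}\beta_j^2$, while $y_j^2\le\tfrac14$. Fubini (justified by the strong bound) and summation in $j$ then produce $|\bbE_\bsy[\lambda-\lambda_s]|\le C\sum_{j>s}\beta_j^2$, and the interpolation inequality
\[
\sum_{j>s}\beta_j^2\;\le\;\beta_{s+1}^{2-p}\sum_{j>s}\beta_j^p\;\le\;C\,s^{-(2-p)/p}\;=\;C\,s^{-2/p+1}
\]
completes the proof of \eqref{eq:lam_s_weak}. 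The estimate \eqref{eq:u_s_weak} follows by applying the continuous linear functional $\calG$ inside the telescoping identity and invoking the eigenfunction bound of Lemma~\ref{lem:dlambda}, with $\|\calG\|_{V^*}$ absorbed into the constant.

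The principal obstacle is the choice of splitting. A naive multivariate second-order Taylor expansion in all tail variables simultaneously produces off-diagonal remainders of the form $\bbE[y_jy_k\,\pdy{\bse_j+\bse_k}\lambda]$ with $j\ne k$; although these vanish by independence, disentangling them from the diagonal contributions while tracking the joint remainder is delicate and, more importantly, pushes the estimate through mixed second derivatives rather than pure ones. The univariate telescoping avoids the cross terms entirely: each increment is a scalar function of a single $y_j$ with the other variables held fixed, so the linear term is killed exactly by independence and only the diagonal $\beta_j^2$ remainders survive, which is precisely what is needed to cross the $s^{-1/p+1}$ barrier of the strong error and reach the sharper weak rate $s^{-2/p+1}$.
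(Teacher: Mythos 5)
Your strong-error argument is exactly the paper's: a Taylor expansion with integral remainder in the tail variables, the first-derivative case of Lemma~\ref{lem:dlambda}, and the Stechkin-type tail bound $\sum_{j>s}\beta_j\le C s^{-1/p+1}$ from \cite[Theorem 5.1]{KSS12}. For the weak error, however, you take a genuinely different and correct route. The paper expands $\lambda$ in \emph{all} tail variables simultaneously to order $k=\lceil 1/(1-p)\rceil$, kills the terms containing a first power of some $y_j$ by independence, bounds the surviving multi-indices via a product/geometric-series manipulation with the auxiliary sequence $\widetilde\beta_j\approx\beta_j^2$, and separately bounds the order-$(k+1)$ remainder by $(\sum_{j>s}\beta_j)^{k+1}\le C s^{(k+1)(-1/p+1)}\le Cs^{-2/p+1}$ — the whole point of the $p$-dependent choice of $k$ being to push the remainder below the target rate. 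Your univariate telescoping $\lambda-\lambda_s=\sum_{j>s}(\lambda_{[j]}-\lambda_{[j-1]})$ with a second-order scalar Taylor expansion in each $y_j$ reaches the same rate more cheaply: the linear term dies exactly under $\bbE[y_j]=0$ since its coefficient is independent of $y_j$, only the pure second derivatives $\pdy{2\bse_j}\lambda$ (not general mixed multi-indices up to order $k+1$) are needed from Lemma~\ref{lem:dlambda}, and the diagonal remainders sum to $C\sum_{j>s}\beta_j^2\le Cs^{-2/p+1}$ with no $p$-dependent truncation order. The interchanges of sum, integral and expectation that you invoke are justified by the uniform summable majorants $\tfrac12\lambdabar\beta_j$, as you note. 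The price of your approach is essentially nil here; the paper's heavier machinery would matter only if one wanted explicit control of the full Taylor polynomial rather than just the expectation.
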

\begin{proof}
Since $\lambda$ is analytic in $\bsy$, Taylor's
Theorem allows us to  expand $\lambda$ as a zeroth order Taylor series 
(see, e.g., \cite[pp. 12,13]{Hoer03}) 
in the variables $\bsy_{\{j > s\}} \coloneqq (y_j)_{j > s}$ about the point $\bs0$:
\begin{align*}
\lambda(\bsy) \,=\, \lambda(\bsy_s; \bs0) +
\sum_{j > s} \int_0^1 \bigg(\pd{}{\lambda}{y_j}\bigg)
(\bsy_s; t\bsy_{\{j > s\}}) y_j \,\rd t\,.
\end{align*}
Noting that by definition $\lambda(\bsy_s; \bs0) = \lambda_s(\bsy_s)$, using the triangle inequality, the fact that $|y_j| \leq \frac{1}{2}$ and the upper bound 
\eqref{eq:dlambda}, it follows that
\begin{equation}
\label{eq:lam_s_sum}
|\lambda(\bsy) - \lambda_s(\bsy_s)| \,\leq\,
 \frac{\lambdabar}{2} \sum_{j > s}\beta_j
\,.
\end{equation}
The eigenfunction is also analytic, and so similarly
\begin{align}
\label{eq:u_s_sum}
\nrm{u(\bsy) - u_s(\cdot, \bsy_s)}{V}
\,&=\, \left\|\sum_{j > s} \int_0^1 \bigg(\pd{}{u}{y_j}\bigg)(\bsy_s; t\bsy_{\{j > s\}})
y_j \,\rd t \right\|_{V}
\nonumber\\
&\leq\, \sum_{j > s} \frac{1}{2} \int_0^1
\left\|\bigg(\pd{}{u}{y_j}\bigg)(\bsy_s; t\bsy_{\{j > s\}})\right\|_V \,\rd t
\,\leq\, \frac{\ubar}{2}  \sum_{j > s}  \beta_j
\,,
\end{align}
where we have used the upper bound \eqref{eq:du}.

In \cite[Theorem 5.1]{KSS12} it was shown that under
Assumption~A\ref{asm:coeff}.\ref{itm:summable}
the tail of the sum of the $\beta_j$ is bounded above by
\begin{align}\label{eq:sumtail}
\sum_{j > s} \beta_j \,\leq\,
\underbrace{\min\left(\frac{p}{1 - p}, 1\right) \nrm{\bsbeta}{\ell^p}
}_{\coloneqq C_\mathrm{trunc} }
s^{-1/p + 1}\,,
\end{align}
which after substitution into \eqref{eq:lam_s_sum} and 
\eqref{eq:u_s_sum} yields the two results for the strong error, 
\eqref{eq:lam_s_strong} and \eqref{eq:u_s_strong}, respectively.

For the weak error \eqref{eq:lam_s_weak} we now use a $k$th order Taylor series expansion 
(see, e.g.,  \cite[pp. 12,13]{Hoer03}), 
then handle the Taylor sum and the remainder term separately.
First, we introduce some notation: let $k = \lceil 1/ (1 - p)\rceil$, and
note that $1 < k < \infty$.
Define $\indx_s \coloneqq \{ \bszero \neq \bsnu \in \indx : \nu_j = 0 $ for all $ j = 1, 2, \ldots, s\}$,  
and for $\ell \in \N$ let $\indx_{\ell, s} \coloneqq \{ \bsnu \in \indx_s : |\bsnu| = \ell\}$.

The $k$th order Taylor series expansion of $\lambda$ in the variables $\bsy_{\{j > s\}}$
is
\begin{align*}
\lambda(\bsy) \,=\, &\lambda(\bsy_s; \bs0)
+ \sum_{\ell = 1}^k\sum_{\bsnu \in \indx_{\ell, s}} \frac{\bsy^\bsnu}{\bsnu!} 
(\partial^\bsnu \lambda) (\bsy_s; \bs0)\\
&+ \sum_{\bsnu \in \indx_{k + 1, s}} \frac{k + 1}{\bsnu!}
\bsy^\bsnu\int_0^1 (1 - t)^k (\partial^\bsnu \lambda) (\bsy_s; t\bsy_{\{j > s\}})\, \rd t\,.
\end{align*}
Taking the expected value with respect to $\bsy$, by linearity we obtain
\begin{align}
\label{eq:weak_trunc_equal}
\nonumber
\bbE_\bsy[\lambda &- \lambda_s] \,=\,
\sum_{\ell = 1}^k\sum_{\bsnu \in \indx_{\ell, s}} 
\frac{1}{\bsnu!}  \bbE_\bsy\left[ \bsy^\bsnu
(\partial^\bsnu \lambda) (\bsy_s; \bs0)\right]
\\&
+ \sum_{\bsnu \in \indx_{k + 1, s}} \frac{k + 1}{\bsnu!}
\bbE_\bsy\left[\bsy^\bsnu\int_0^1 (1 - t)^k 
(\partial^\bsnu \lambda) (\bsy_s; t\bsy_{\{j > s\}})
\, \rd t\right].
\end{align}
Since each $y_j$ is independent, for $\bsnu \in \indx_{\ell, s}$,  we have
\begin{align*}
\bbE_\bsy\left[  \bsy^\bsnu (\partial^\bsnu\lambda) (\bsy_s; \bszero)\right]
\,=\, \bbE_\bsy \left[ (\partial^\bsnu\lambda) (\bsy_s; \bszero)\right] 
\prod_{j > s}\bbE_\bsy[y_j^{\nu_j}]\,,
\end{align*}
and because $y_j$ has mean 0, any term in the first sum in \eqref{eq:weak_trunc_equal} 
with at least one $\nu_j = 1$ is zero.
This means that for the first term in \eqref{eq:weak_trunc_equal} 
we only need to consider higher order derivatives ($\ell \geq 2$),
and we can restrict the inner sum to $\bsnu \in \calF_{\ell, s}$ such that $\nu_j \neq 1$, giving
\begin{align*}
\bbE_\bsy[\lambda &- \lambda_s] \,=\,
\sum_{\ell = 2}^k\sum_{\substack{\bsnu \in \indx_{\ell, s}\\ \nu_j \neq 1\, \forall j}} 
\frac{1}{\bsnu!}  \bbE_\bsy\left[ \bsy^\bsnu
(\partial^\bsnu \lambda) (\bsy_s; \bs0)\right]
\\&
+ \sum_{\bsnu \in \indx_{k + 1, s}} \frac{k + 1}{\bsnu!}
\bbE_\bsy\left[\bsy^\bsnu\int_0^1 (1 - t)^k 
(\partial^\bsnu \lambda)(\bsy_s; t\bsy_{\{j > s\}})
\, \rd t\right].
\end{align*}

Taking the absolute value then using the triangle inequality, 
monotonicity of the expectation and that $|y_j| \leq 1/2$,
we have the bound
\begin{align*}
|\bbE_\bsy[\lambda &- \lambda_s]| \,\leq\,
\sum_{\ell = 2}^k\sum_{\substack{\bsnu \in \indx_{\ell, s}\\ \nu_j \neq 1\, \forall j}}
\frac{1}{2^\ell\bsnu!}  \bbE_\bsy\left[ (\partial^\bsnu \lambda) (\bsy_s; \bs0)|\right]\\
&+ \sum_{\bsnu \in \indx_{k + 1, s}} \frac{k + 1}{2^{k + 1}\bsnu!}
\bbE_\bsy\left[\int_0^1 (1 - t)^k |(\partial^\bsnu \lambda)(\bsy_s; t\bsy_{\{j > s\}})|\, \rd t \right]\,.
\end{align*}

Bounding each derivative using \eqref{eq:dlambda}, which is independent of $\bsy$,
and then evaluating the remaining integral over $t$ exactly gives
\begin{align}
\label{eq:weak_err1}
|\bbE_\bsy[\lambda - \lambda_s]|
&\leq \sum_{\ell = 2}^k\sum_{\substack{\bsnu \in \indx_{\ell, s}\\ \nu_j \neq 1 \, \forall j}} 
\frac{\overline{\lambda}(\ell!)^{1 + \epsilon}}{2^\ell\bsnu!} \bsbeta^\bsnu
+ \sum_{\bsnu \in \indx_{k + 1, s}} \frac{\overline{\lambda} ((k + 1)!)^{1 + \epsilon}}{2^{k + 1}\bsnu!} \bsbeta^\bsnu
\nonumber\\
\,&\leq\, C_k \sum_{\ell = 2}^k \sum_{\substack{\bsnu \in \indx_{\ell, s}\\\nu_j \neq 1 \, \forall j}}
\bsbeta^\bsnu
+ C_{k + 1} \sum_{\bsnu \in \indx_{k + 1, s}} \bsbeta^\bsnu\,,
\end{align}
where
\begin{align*}
C_k \,=\, \overline{\lambda} \max_{\substack{\bsnu \in \indx\\ |\bsnu| \leq k}}\frac{(|\bsnu|!)^{1 + \epsilon}}{2^{|\bsnu|} \bsnu!}
\,<\, \infty
\end{align*}
is independent of $s$, but depends on $p$ through $k$, and we have again dropped 
the subscript 1 for the upper bound (see \eqref{eq:lambda_bnd}) on the minimal eigenvalue,
$\overline{\lambda} \coloneqq \lambdabar$.

For the first sum in \eqref{eq:weak_err1}, since the order $|\bsnu|$ satisfies 
$|\bsnu| \geq \nrm{\bsnu}{\ell^\infty}$ and every $\beta_j$ is positive, 
we can add extra terms to the sum to obtain the
bound
\[
\sum_{\ell = 2}^k \sum_{\substack{\bsnu \in \indx_{\ell, s}\\\nu_j \neq 1\, \forall j}}
\bsbeta^\bsnu
\,=\, \sum_{\substack{\bsnu \in \indx_s\\ |\bsnu| \leq k\\ \nu_j \neq 1 \, \forall j}}
\bsbeta^\bsnu
\,\leq\,
\sum_{\substack{\bsnu \in \indx_s\\ \nrm{\bsnu}{\ell^\infty} \leq k\\ \nu_j \neq 1\, \forall j}}
\bsbeta^\bsnu.
\]
Then, as in \cite{Gant18} we can write the sum on the right as the following product
\[
\sum_{\substack{\bsnu \in \indx_s\\ \nrm{\bsnu}{\ell^\infty} \leq k\\ \nu_j \neq 1 \, \forall j}}
\bsbeta^\bsnu
\,=\, - 1 +  \prod_{j > s} \bigg(1 + \sum_{\ell = 2}^k \beta_j^\ell\bigg)
\,=\, -1 + \prod_{j > s} \bigg(1 +\bigg( \frac{1 - \beta_j^{k - 1}}{1 - \beta_j}\bigg) \beta_j^2\bigg)\,,
\]
where in the last step we have used the formula for the sum of a geometric series.

In order to simplify the product above, we define the sequence $\widetilde{\bsbeta}$ by
\[
\widetilde{\beta}_j \,\coloneqq\, 
\begin{cases}
\beta_j^2\,, & \text{for } j = 1, 2, \ldots, s,\\[2mm]
\displaystyle\bigg(\frac{1 - \beta_j^{k - 1}}{1 - \beta_j}\bigg) \beta_j^2 \,, &
\text{for } j > s\,,
\end{cases}
\]
which, because the sequence $\bsbeta$ is assumed to be decreasing, 
is well defined for $s$ sufficiently large such that $\beta_s \leq 1/2$.
Further, since $\widetilde{\beta}_j \leq \beta_j^2/ (1 - \beta_s) \leq 2 \beta_j^2$ for all $j \in \N$,
it follows that $\widetilde{\bsbeta} \in \ell^{p/2}$ and $\nrm{\widetilde{\bsbeta}}{\ell^{p/2}}$
can be bounded from above independently of $s$.

Then, using the inequalities $\ln(1 + x) \leq x$ and $-1 + e^x \leq xe^x$, 
we can bound the first sum in \eqref{eq:weak_err1} by
\begin{align}
\label{eq:weak_sum1}
\sum_{\ell = 2}^k \sum_{\substack{\bsnu \in \indx_{\ell, s}\\\nu_j \neq 1 \, \forall j}}
\bsbeta^\bsnu
\,&\leq\,
 -1 + \prod_{j > s} (1 + \widetilde{\beta}_j)
\,=\, -1 + \exp \Bigg( \sum_{j > s} \ln (1 + \widetilde{\beta}_j)\Bigg)
\,\leq\, \exp\Bigg(\sum_{j > s} \widetilde{\beta}_j\Bigg) \sum_{j > s} \widetilde{\beta}_j
\nonumber\\
&\leq\, \exp\big(\nrm{\widetilde{\bsbeta}}{\ell^1}\big) 
\min\left(\frac{p}{2 - p}, 1 \right) \nrm{\widetilde{\bsbeta}}{\ell^{p/2}} \, s^{-2/p + 1}\,,
\end{align}
where we have also used \eqref{eq:sumtail} for the sequence $\widetilde{\bsbeta} \in \ell^{p/2}$.

For the second sum in \eqref{eq:weak_err1}, since 
$\binom{k + 1}{\bsnu} = \frac{(k+1)!}{\bsnu!} \geq 1$, 
then using \eqref{eq:sumtail} and the definition of $k$ we have 
\begin{align}
\label{eq:weak_sum2}
\sum_{\bsnu \in \indx_{k + 1, s}} \bsbeta^\bsnu
\,&\leq\, \sum_{ \bsnu \in \indx_{k + 1, s}} \binom{k + 1}{\bsnu} \bsbeta^{\bsnu}
\,=\, \Bigg(\sum_{j > s} \beta_j\Bigg)^{k + 1}
\,\leq\, C_\mathrm{trunc}^{k + 1} s^{(k + 1)(-1/p + 1)}
\nonumber\\
\,&\leq\, C_\mathrm{trunc}^{k + 1} s^{(1/(1 - p) + 1) \cdot (-1/p + 1)}
\,=\, C_\mathrm{trunc}^{k + 1} s^{-2/p + 1}\,.
\end{align}

The result \eqref{eq:lam_s_weak} for the weak error of the eigenvalue
is then obtained by substituting \eqref{eq:weak_sum1} and \eqref{eq:weak_sum2}
into \eqref{eq:weak_err1}.

As for the proof of the strong error \eqref{eq:u_s_strong}, we can also expand $u$
as a $k$th order Taylor series, and then use the same argument to prove the
bound \eqref{eq:u_s_weak} for the weak error of the eigenfunction.
\end{proof}

\subsection{QMC error}
Given the bounds in Lemma~\ref{lem:dlambda} on the mixed derivatives of the minimal
eigenpair we now obtain an upper bound of the root-mean-square error of the QMC approximation of
the truncated problem.

\begin{theorem}
\label{thm:qmc_err}
Let $N \in \N$ be prime, $\calG \in V^*$ and suppose that 
Assumption~A\ref{asm:coeff} holds.
Then the root-mean-square errors of the CBC-generated
randomly shifted lattice rule approximations
of $\bbE_\bsy\left[\lambda_s\right]$ and $\bbE_\bsy\left[\calG(u_s)\right]$
are bounded by
\begin{align}
\label{eq:qmcerr_lam}
\sqrt{\bbE_\bsDelta\left[ \left| \bbE_\bsy\left[\lambda_s\right] - Q_{N, s}\lambda_s\right|^2\right]}
\,&\leq\, C_{1, \alpha} N^{-\alpha}\,,
\quad\text{and}\\
\label{eq:qmcerr_u}
\sqrt{\bbE_\bsDelta\left[\left|\bbE_\bsy\left[\calG(u_s)\right] -
 Q_{N, s} \calG(u_s)\right|^{2}\right]}
\,&\leq\, C_{2, \alpha} N^{-\alpha}\,,
\end{align}
where
\begin{align}
 \alpha \,=\,
 \begin{cases}
 1 - \delta, \mbox{ for arbitrary $\delta\in (0,\frac{1}{2})$}, & \text{if } p \in (0, \frac{2}{3}]\,,\\
\frac{1}{p} - \frac{1}{2} & \text{if } p \in (\frac{2}{3}, 1)\,,
\end{cases}
\end{align}
and the constants $C_{1, \alpha}$ and $C_{2, \alpha}$ are independent
of $s$.
\end{theorem}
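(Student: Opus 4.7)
The plan is to apply the CBC-based randomly shifted lattice rule error bound \eqref{eq:msebound} to the truncated integrand $\lambda_s$ (and separately to $\calG(u_s)$), and to choose POD weights $\gamma_\setu$ that optimally balance the two factors appearing in that bound. The derivative estimates of Lemma~\ref{lem:dlambda} provide the required weighted Sobolev norm bound, after which the main task is to verify that the minimised bound is finite and independent of the truncation dimension~$s$.

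Since the norm \eqref{eq:nrm_W} of $\calW_{s,\bsgamma}$ involves only mixed \emph{first}-order partial derivatives, for each $\setu\subseteq\{1{:}s\}$ the relevant multi-index $\bsnu$ has $\nu_j=1$ for $j\in\setu$ and $\nu_j=0$ otherwise, giving $|\bsnu|=|\setu|$ and $\bsbeta^{\bsnu}=\prod_{j\in\setu}\beta_j$. Thus Lemma~\ref{lem:dlambda} yields
\begin{equation*}
\nrm{\lambda_s}{s,\bsgamma}^{2} \,\leq\, \lambdabar^{\,2}\sum_{\setu\subseteq\{1:s\}}\frac{\bigl[(|\setu|!)^{1+\epsilon}\bigr]^{2}\prod_{j\in\setu}\beta_j^{2}}{\gamma_\setu}\,.
\end{equation*}
Inserting this and \eqref{eq:msebound}, and using $\varphi(N)=N-1\geq N/2$ (since $N$ is prime), I would follow the standard Lagrange argument of \cite[Section~5]{KSS12} to minimise the product of the two sums in the bound over $\gamma_\setu>0$. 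The optimal POD choice is
\begin{equation*}
\gamma_\setu^{\star} \,=\, \left((|\setu|!)^{1+\epsilon}\prod_{j\in\setu}\beta_j\sqrt{\tfrac{(2\pi^2)^{\eta}}{2\zeta(2\eta)}}\,\right)^{\!2/(1+\eta)}\,,
\end{equation*}
and the bound collapses to $C\,\lambdabar\,N^{-1/(2\eta)}\,\bigl[S_\eta(s)\bigr]^{(1+\eta)/(2\eta)}$, where
\begin{equation*}
S_\eta(s) \,\coloneqq\,\sum_{\emptyset\neq\setu\subseteq\{1:s\}}\!\left(\frac{2\zeta(2\eta)}{(2\pi^{2})^{\eta}}\right)^{\!|\setu|/(1+\eta)}\!\bigl[(|\setu|!)^{1+\epsilon}\bigr]^{2\eta/(1+\eta)}\!\prod_{j\in\setu}\!\beta_j^{2\eta/(1+\eta)}\,.
\end{equation*}

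The main obstacle is to bound $S_\eta(s)$ uniformly in~$s$. Grouping subsets by their cardinality $\ell=|\setu|$ and using the elementary estimate $\sum_{|\setu|=\ell}\prod_{j\in\setu}x_j\le\tfrac{1}{\ell!}\bigl(\sum_{j\geq1}x_j\bigr)^{\ell}$ for non-negative $x_j$ gives
\begin{equation*}
S_\eta(s) \,\leq\, \sum_{\ell=1}^{\infty}(\ell!)^{2\eta(1+\epsilon)/(1+\eta)\,-\,1}\,c_{\eta}^{\,\ell}\,,\qquad c_{\eta}\,\coloneqq\,\left(\frac{2\zeta(2\eta)}{(2\pi^{2})^{\eta}}\right)^{\!1/(1+\eta)}\!\!\sum_{j\geq 1}\beta_j^{2\eta/(1+\eta)}\,.
\end{equation*}
Two conditions must be met simultaneously: (a) $c_\eta<\infty$, which by Assumption~A\ref{asm:coeff}.\ref{itm:summable} requires $2\eta/(1+\eta)\geq p$, equivalently $\eta\geq p/(2-p)$; and (b) the outer series in $\ell$ converges, which, since $\ell!$ dominates any geometric factor, requires the factorial exponent to be non-positive, i.e.\ $\eta(1+2\epsilon)\leq 1$.

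Finally, I would optimise $\eta\in(1/2,1]$ so as to make the rate $1/(2\eta)$ as large as possible subject to (a) and (b). When $p\in(2/3,1)$ the binding constraint is $\eta=p/(2-p)>1/2$, giving $1/(2\eta)=1/p-1/2$, and (b) is satisfied for any $\epsilon<(1-p)/p$. When $p\in(0,2/3]$ the binding constraint is $\eta>1/2$, and taking $\eta=1/(2(1-\delta))$ for arbitrary $\delta\in(0,1/2)$ yields rate $1-\delta$, with (b) satisfied for any $\epsilon<\delta$; recall that $\epsilon>0$ is free in Lemma~\ref{lem:dlambda}. In both cases $S_\eta(s)$ is bounded by a convergent series independent of~$s$, giving \eqref{eq:qmcerr_lam}. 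The proof of \eqref{eq:qmcerr_u} is identical after replacing $\lambdabar$ by $\nrm{\calG}{V^{*}}\ubar$ and using \eqref{eq:du} in place of \eqref{eq:dlambda}, together with the linearity bound $|\pdy{\bsnu}\calG(u_s)|\leq\nrm{\calG}{V^{*}}\nrm{\pdy{\bsnu} u_s}{V}$.
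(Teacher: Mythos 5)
Your proposal follows essentially the same route as the paper's proof: bound $\nrm{\lambda_s}{s,\bsgamma}$ via Lemma~\ref{lem:dlambda}, insert it into \eqref{eq:msebound}, take the \cite[Lemma~6.2]{KSS12}-optimal POD weights (your $\gamma_\setu^\star$ coincides with the paper's \eqref{eq:gamma_opt}), bound the resulting sum uniformly in $s$ by grouping over $|\setu|=\ell$, and select $\eta$ (and then $\epsilon$) according to $p$ exactly as in the paper. One small correction: the outer series needs the factorial exponent to be \emph{strictly} negative (not merely non-positive), and for $p\in(0,\tfrac23]$ with $\eta=1/(2(1-\delta))$ the admissible range is $\epsilon<\tfrac12-\delta$ rather than $\epsilon<\delta$; since $\epsilon>0$ is a free parameter this does not affect the conclusion, but the stated threshold is wrong for $\delta\ge\tfrac14$.
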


\begin{proof}
Since the estimates from Lemma \ref{lem:dlambda} are independent of $\bsy$
they can be used to bound the norm (squared) of  $\lambda_s$ in $\calW_{s,
\bsgamma}$. By \eqref{eq:dlambda} we obtain
\begin{align*}
 \nrm{\lambda_s}{s, \bsgamma}^2
&\,\leq\, \overline{\lambda}^2
\sum_{\setu \subseteq\{1:s\}} \frac{\Lambda_\setu^2}{\gamma_\setu}\,,\quad
 \Lambda_\setu \,\coloneqq\, (|\setu|!)^{1 + \epsilon} \prod_{j \in \setu} \beta_j \,,
\end{align*}
with weights $\bsgamma$ and $\epsilon\in(0,1)$ as yet unspecified.
Then, using \eqref{eq:msebound} the mean-square error of the lattice rule
approximation is bounded above by
\begin{align}
\label{eq:mse2}
\bbE_\bsDelta\left[\left|\bbE_\bsy\left[\lambda_s\right] - Q_{N, s}\lambda_s\right|^2\right] \,\leq\,
C_{s, \bsgamma, \eta}\, \varphi(N)^{-\frac{1}{\eta}}\,,
\end{align}
where
\begin{align*}
C_{s, \bsgamma, \eta} \,\coloneqq\,
\overline{\lambda}^2
\left(\sum_{\emptyset \neq \setu \subseteq \{1:s\}} \gamma_\setu^\eta\,
\rho(\eta)^{|\setu|}\right)^\frac{1}{\eta}
\left(\sum_{\setu \subseteq \{1:s\}} \frac{\Lambda_\setu^2}{\gamma_\setu} \right)
\end{align*}

We now choose the weight parameters such that $C_{s, \bsgamma, \eta}$ can
be bounded independently of $s$. From \cite[Lemma 6.2]{KSS12} the choice of
weights that minimise $C_{s, \bsgamma, \eta}$ are
\begin{align}
\label{eq:gamma_opt}
\gamma_\setu(\eta) \,=\, \left(\frac{\Lambda_\setu^2}{\rho(\eta)^{|\setu|}}\right)^\frac{1}{1 + \eta}\,,
\end{align}
which are of POD (product and order-dependent) form. With these
weights  it follows that $C_{s, \bsgamma,\eta} \le \overline{\lambda}^2\,S_{s,
\eta}^{(1 + \eta)/\eta}$, where
\begin{align*}
 S_{s, \eta} \,\coloneqq\, \sum_{\setu \subseteq \{1:s\}} \left(\Lambda_\setu^{2\eta} \rho(\eta)^
{|\setu|}\right)^\frac{1}{1 + \eta}\,,
\end{align*}
so we must show that the sum $S_{s, \eta}$ can be bounded independently
of $s$. Let
\begin{align*}
q \,\coloneqq\, \frac{2\eta(1 + \epsilon)}{1 + \eta}
\quad \text{and} \quad
\alpha_j \,\coloneqq\, \left(\rho(\eta) (\beta_j)^{2\eta}\right)^\frac{1}{1 + \eta} \quad \text{for all } j \in \N\,,
\end{align*}
so that
\begin{align*}
S_{s, \eta} \,=\, \sum_{\ell = 0}^s (\ell!)^q
\sum_{\satop{\setu \subseteq \{1:s\}}{|\setu| = \ell}} \prod_{j \in \setu} \alpha_j
\,\le\, \sum_{\ell = 0}^s (\ell !)^{q - 1}
 \bigg(\sum_{j = 1}^s \alpha_j\bigg)^\ell
\,<\, \infty\,,
\end{align*}
which holds by the ratio test provided that $q<1$ and
$\sum_{j=1}^\infty \alpha_j < \infty$. Under
Assumption~A\ref{asm:coeff}.\ref{itm:summable}, we therefore require that
\begin{align*}
 \frac{2\eta(1 + \epsilon)}{1 + \eta} < 1\,\iff \, \epsilon < \frac{1-\eta}{2\eta}
  \quad\mbox{and}\quad
\frac{2\eta}{1 + \eta} \le p \,\iff \, \eta \geq \frac{p}{2 - p}\,.
\end{align*}
To balance these conditions with the requirement that $\eta \in
(\frac{1}{2}, 1]$, we choose a different $\eta$ depending on the decay
rate $p$ and then choose $\epsilon \coloneqq (1-\eta)/(4\eta)$. Note that
$\eta=1$, equivalently $p = 1$, has to be excluded to ensure that $\epsilon>0$.

For $p \in (0, \frac{2}{3}]$, we have $\frac{p}{2 - p} \leq
\frac{1}{2}$ so there is no further restriction on $\eta$ and we take
$\eta \coloneqq \frac{1}{2(1 - \delta)}$ for arbitrary $\delta\in
(0,\frac{1}{2})$. However, for $p \in (\frac{2}{3},1)$ the value of
$\eta$ is restricted and we take it as small as possible, namely,
$\eta \coloneqq \frac{p}{2 - p}$. Substituting these choices of $\eta$
into \eqref{eq:mse2} and taking $N$ to be prime (for simplicity) yields
the result \eqref{eq:qmcerr_lam}.

The error bound \eqref{eq:qmcerr_u} follows in the same way, after
observing that the norm of $\calG(u_s)$ can be bounded using
\eqref{eq:du}
\begin{align*}
\nrm{\calG(u_s)}{s,\bsgamma}^2
\,&\leq\,
\sum_{\setu \subseteq \{1:s\}} \frac{1}{\gamma_\setu}
\int_{[0, 1]^{|\setu|}} \bigg(\int_{[0, 1]^{s - |\setu|}}
\nrm{\calG}{V^*} \bigg\|\pd{|\setu|}{u_s}{\bsy_\setu}(\cdot, \bsy_s)\bigg\|_V
\rd\bsy_{-\setu}\bigg)^2 \rd\bsy_{\setu}\\
&\leq\,
\nrm{\calG}{V^*}^2\overline{u}^2
\sum_{\setu \subseteq \{1:s\}} \frac{\Lambda_\setu^2}{\gamma_\setu}\,.
\end{align*}
This completes the proof.
\end{proof}

\begin{Remark}
\label{rem:qmcerr_lam_h}
\normalfont
The main ingredients in this proof are the bounds on the derivatives
of the eigenvalue, which are needed to show that 
$\lambda_s \in \calW_{s,\bsgamma}$. 
As was stated in Remark~\ref{rem:dlam_h}, these bounds also
hold for the derivatives of $\lambda_{s,h}$.
Thus, for $h$ sufficiently small (see \eqref{eq:h0}) the same error bound holds for
the QMC error of the FE error approximation $\lambda_{s, h}$, but
with the constants possibly depending on $h$.
\end{Remark}

\subsection{Total error}
Using the triangle inequality to give, 
the mean-square error of the combined truncation-FE-QMC approximation
of the expected value of $\lambda_1$ can be
bounded above by
\begin{align}
\label{eq:errdecomp}\nonumber
&\bbE_\bsDelta\left[\left|\bbE_\bsy[\lambda] - Q_{N, s}\lambda_{s, h}\right|^2\right]
\,\leq\,
C\Big(\bbE_\bsDelta\left[\left|\bbE_\bsy[\lambda - \lambda_s]\right|^2\right]\\
&+ \bbE_\bsDelta\left[\left|\bbE_\bsy[\lambda_s] - Q_{N, s}\lambda_{s}\right|^2\right]
+ \bbE_\bsDelta\left[\left|Q_{N, s}(\lambda_s -
\lambda_{s, h})\right|^2\right]\Big)\,,
\end{align}
for $C > 0$.
Here we have conveniently split the total error into
three separate errors: one each for the truncation, QMC and FE errors, respectively.
Note that there are different ways of splitting the total error, but we have chosen
the above technique because now the second term is the QMC error 
for the actual eigenvalue and not the FE approximation $\lambda_{s, h}$.
This is important because it means that we do not need specific bounds 
on the parametric regularity of the FE eigenvalue.

The terms in the upper bound on the mean-square error in \eqref{eq:errdecomp} can
be bounded using \eqref{eq:lam_s_weak}, \eqref{eq:qmcerr_lam} and 
\eqref{eq:lambda_fe_err}, respectively, leading to the following theorem. 
A similar splitting argument, using \eqref{eq:u_s_strong}, \eqref{eq:u_fe_err} and 
\eqref{eq:qmcerr_u} instead, gives a bound on the error of the
approximation for functionals of the corresponding eigenfunction.

\begin{theorem}
\label{thm:total_error} Let Assumption~A\ref{asm:coeff} hold, $h > 0$ be sufficiently small, $ s \in \N$, 
$N\in \N$ be prime and let $\bsz \in \N^s$ be a generating vector constructed using the CBC
algorithm with weights given by \eqref{eq:gamma_opt}.
Then 
the root-mean-square error, with respect to the random shift $\bsDelta
\in [0, 1]^s$, of our truncation-FE-QMC approximation of the mean
  of the
minimal eigenvalue $\lambda$ is bounded by
\begin{align}
\label{eq:err_lam}
\sqrt{\bbE_\bsDelta\left[\left|\bbE_\bsy[\lambda] - Q_{N, s}\lambda_{s, h}\right|^2\right]}
\,\leq\,C_1\Big(h^2 + s^{-2/p + 1} + N^{-\alpha}\Big)\,.
\end{align}
For any functional $\calG \in H^{-1 + t}(D)$ applied to the corresponding
  eigenfunction $u$, with $t \in [0, 1]$, the 
truncation-FE-QMC approximation of its mean 
is bounded by
\begin{align}
\label{eq:err_u}
\sqrt{\bbE_\bsDelta\left[\left|\bbE_\bsy[\calG(u)] - Q_{N, s}\calG(u_{s, h})\right|^2\right]}
\,\leq\,C_2\Big(h^{1 + t} + s^{-2/p + 1} + N^{-\alpha}\Big)\,.
\end{align}
Where
\begin{align*}
\alpha \,=\, \begin{cases}
1 - \delta, \mbox{ for arbitrary $\delta\in (0,\frac{1}{2})$,} &\text{if } p \in (0, \tfrac{2}{3}]\,,\\
\tfrac{1}{p} - \tfrac{1}{2} &\text{if } p \in (\tfrac{2}{3}, 1)\,,
\end{cases}
\end{align*}
and the constants $C_1, C_2 > 0$ are independent of $s$, $h$ and $N$.
\end{theorem}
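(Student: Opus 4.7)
The plan is to carry out the three-term splitting already displayed in \eqref{eq:errdecomp} and to bound each of the resulting pieces using one of the preceding theorems. First I would add and subtract $\bbE_\bsy[\lambda_s]$ and $Q_{N,s}\lambda_s$ inside $\bbE_\bsy[\lambda]-Q_{N,s}\lambda_{s,h}$, square, apply $(a+b+c)^2\le 3(a^2+b^2+c^2)$, and then take $\bbE_\bsDelta$; this produces \eqref{eq:errdecomp}, separating a truncation, a QMC, and a finite element contribution.

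The first term is deterministic in $\bsDelta$ and therefore reduces to $|\bbE_\bsy[\lambda-\lambda_s]|^2$, which is controlled by the weak truncation bound \eqref{eq:lam_s_weak} of Theorem~\ref{thm:trunc} and yields the rate $s^{-2/p+1}$. The second term is by construction the mean-square QMC quadrature error for $\lambda_s$, so Theorem~\ref{thm:qmc_err} delivers $N^{-\alpha}$ with the announced $\alpha$; the constant there is $s$-independent because the POD weights \eqref{eq:gamma_opt} are dictated by the $s$-independent derivative bounds of Lemma~\ref{lem:dlambda}. For the third (finite element) term I would exploit that $Q_{N,s}$ is a convex combination of point evaluations, so that
\[
|Q_{N,s}(\lambda_s - \lambda_{s,h})| \,\le\, \esssup_{\bsy\in U}|\lambda_s(\bsy)-\lambda_{s,h}(\bsy)| \,\le\, C_1\,h^{2},
\]
by the $\bsy$-uniform estimate \eqref{eq:lambda_fe_err} of Theorem~\ref{thm:fe_err}; taking $\bbE_\bsDelta$ contributes nothing further. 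Combining the three bounds, taking square roots and using $\sqrt{a^2+b^2+c^2}\le a+b+c$ yields \eqref{eq:err_lam}. The argument for \eqref{eq:err_u} is entirely parallel, substituting the weak functional truncation bound \eqref{eq:u_s_weak}, the QMC bound \eqref{eq:qmcerr_u}, and the $\bsy$-uniform FE bound \eqref{eq:Gu_fe_err}.

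The main point requiring care is the choice of splitting itself, rather than any deep analytic obstacle. The decomposition in \eqref{eq:errdecomp} is deliberately arranged so that the QMC estimate is applied to $\lambda_s$ rather than to $\lambda_{s,h}$, because the derivative bounds of Lemma~\ref{lem:dlambda}, and hence the QMC constants in Theorem~\ref{thm:qmc_err}, were proved genuinely $h$-independent only at the continuous level; the finite element discretisation is then absorbed separately through the $\bsy$-uniform FE bound, avoiding any need to track $h$-dependence through the POD weights. The only other bookkeeping concerns verifying that each constant invoked from Theorems~\ref{thm:fe_err}, \ref{thm:trunc} and~\ref{thm:qmc_err} is independent of $s$, $h$ and $N$, which is precisely what those theorems were designed to provide.
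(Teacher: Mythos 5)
Your proposal is correct and follows essentially the same route as the paper: the three-term splitting of \eqref{eq:errdecomp}, with the weak truncation bound, the QMC bound applied to $\lambda_s$ (not $\lambda_{s,h}$), and the $\bsy$-uniform FE bound absorbed through the equal-weight quadrature, including the same rationale for why the QMC estimate must act on the undiscretised truncated eigenvalue. For the eigenfunction case your choice of \eqref{eq:u_s_weak} and \eqref{eq:Gu_fe_err} is in fact the correct one for obtaining the stated rates $s^{-2/p+1}$ and $h^{1+t}$.
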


Comparing this with the corresponding result for the elliptic source problem
\cite[Theorem~8.1]{KSS12}, observe that we obtain the exact same
rate of convergence in $N$ for all $p \in (0, 1)$. The only exception is
that our results do not hold when $p = 1$, whereas
\cite[Theorem~8.1]{KSS12} presents a result when $p = 1$. However, for that case
they do require an additional assumption (see \cite[equation (6.5)]{KSS12}).
For the truncation error, we obtain the same convergence rate as in \cite{Gant18},
which improved the rate in \cite[Theorem~8.1]{KSS12} by one order.
To compare the two FE convergence rates recall that the number of degrees of
freedom in the FE grid is $M_h = \bigO(h^{-d})$.
Letting $\calG \in H^{-1 + t}(D)$, note that the correct comparison is the case when the 
source term belongs to $L^2(D)$, so that in \cite[Theorem~8.1]{KSS12} $\tau = 1 + t$.
In this case the FE error convergence rate from \cite[Theorem~8.1]{KSS12} for 
the linear functional $\calG$ of the solution to the source problem is $M_h^{-\tau/d} = \bigO(h^{1 + t})$,
which is exactly the rate in \eqref{eq:err_u} for the eigenfunction.

\section{Numerical results}
\label{sec:numerical}
Now we present numerical results on the performance of our truncation-FE-QMC
algorithm in approximating the expected value of the smallest eigenvalue of 
two different eigenvalue problems of the form \eqref{eq:evp}.
In both cases, the stochastic coefficients $a$ and $b$ are composed of scaled 
trigonometric functions, which can be seen as artificial
Karhunen--Lo\`{e}ve (KL) expansions. We will focus on the question of whether
the error matches the theoretical estimate from Theorem~\ref{thm:total_error},
and so we will study different scalings of the basis functions $a_j, b_j$ in the coefficients,
which will correspond to different values of the decay parameter $p$.
All computations were performed on the Katana cluster at UNSW Sydney.

In order to estimate the quadrature error,
 we conduct a small number $R$ of 
different approximations based on independently and identically 
distributed random shifts.
We label the approximation generated by the $r$th random shift $\bsDelta^{(r)}$
as $Q^{(r)}_{N, s}\lambda_{s, h}$, and the final approximation is taken to be the average 
over the $R$ independent approximations:
\begin{equation*}
\widehat{Q}_{R, N, s}\lambda_{s, h} \,\coloneqq\,
\frac{1}{R} \sum_{r = 1}^R Q^{(r)}_{N, s} \lambda_{s, h}\,.
\end{equation*}
In this way, we obtain an unbiased estimate of the integral (of $\lambda_{s, h}$) 
and the sample standard deviation over the different shifts gives an estimate of the 
quadrature component of the RMS error
\begin{align}
\label{eq:rmse_est}
\sqrt{\bbE_{\bsDelta} \left[\left|
\bbE_\bsy[\lambda_{s, h}] - \widehat{Q}_{N, s}\lambda_{s, h}\right|^2\right]}
\,\approx\, \sqrt{\frac{1}{R(R - 1)} \sum_{r = 1}^R \left(
Q^{(r)}_{N, s}\lambda_{s, h} - \widehat{Q}_{R, N, s}\lambda_{s, h}\right)^2}\,.
\end{align}
Note that averaging over $R$ shifts increases the total number of function 
evaluations by a factor of $R$, and hence, one can expect the error to
correspondingly decrease by a factor of $1/\sqrt{R}$ (in line with the Monte Carlo rate).

The smallest eigenvalue of each FE system is approximated using 
the \texttt{eigs} function in Matlab, which in turn runs a Krylov--Schur
algorithm using the ARPACK library.
We set the tolerance for the accuracy of this eigensolver to be $10^{-14}$,
so as to ensure that the numerical errors incurred in computing the
FE eigenvalues are negligible compared to the overall approximation error.

In practice, we cannot compute the optimal function space weights $\gamma_\setu$
according to the formula \eqref{eq:gamma_opt} from the
proof of Theorem~\ref{thm:qmc_err}, because \eqref{eq:gamma_opt} 
depends on the sequence $\bsbeta$, and $C_\bsbeta$ \eqref{eq:Cbeta} 
contains factors that cannot be computed explicitly. Instead we choose the
weights so that the product components decay at the same rate as in
the optimal formula \eqref{eq:gamma_opt}.
As such, in our numerical experiments for $\setu \subset \N$ we set the 
function space weight $\gamma_\setu$ to be
\begin{equation}
\label{eq:gamma_num}
\gamma_\setu \,=\, |\setu|! \prod_{j \in \setu} \left(
\max\left\{\nrm{a_j}{L^\infty(D)}, \nrm{b_j}{L^\infty(D)}\right\}\right)^
\eta\,,
\end{equation}
where $\eta = 4/3$ if $p \in (0, 2/3]$ and $\eta = 2 - p$ if $p \in (2/3, 1)$.
Note that $\eta$ in \eqref{eq:gamma_num} is not the same as in
\eqref{eq:gamma_opt}.

\subsection{Problem 1}
In our first simple example, we consider an eigenvalue problem \eqref{eq:evp}
on the domain $D = (0, 1)^2$ where the only non-trivial coefficient
is $a(\bsy)$ in the second-order term. 
Explicitly, the coefficient $a(\bsy)$ is given as in
\eqref{eq:a_general} (with $a_j$ defined below) but $b(\bsy) \equiv 0$ and $c \equiv 1$.
For some decay $q \geq 4/3$, the basis functions for the coefficient $a(\bsy)$ 
are defined to be
\begin{equation}
\label{eq:a_eg1}
a_0 \equiv 1\,,
\quad
a_j(\bsx) \,=\, \frac{1}{1 + (j\pi )^q} \sin(j\pi x_1)\sin((j + 1)\pi x_2)\,,
\quad \text{for } \bsx = (x_1, x_2) \in (0, 1)^2\,.
\end{equation}
Clearly, for all $j \in \N$ we have that 
\[
\nrm{a_j}{L^\infty(D)} \,=\, \frac{1}{1 + (j\pi )^q} \,<\, \frac{1}{(j\pi)^q}\,,
\]
and hence $\sum_{j = 1}^\infty \nrm{a_j}{L^\infty(D)} < \zeta(q)/\pi^q$,
where again $\zeta$ is the Riemann zeta function.
It follows that the coefficient is bounded above and below as required with
\begin{equation*}
\amin \,=\, 1 - \frac{\zeta(q)}{\pi^q}\,
\quad \text{and} \quad
\amax \,=\, 1 + \frac{\zeta(q)}{\pi^q}\,.
\end{equation*}
Similarly, the parameter $q$ determines the rate of decay of the norms of the basis functions,
and in turn we can take $p$ in Assumption~A\ref{asm:coeff}.\ref{itm:summable}
to satisfy $p \in (1/q, 1)$. 
We also ran the case when the coefficient $b$ is nonzero
and of a similar form to \eqref{eq:a_eg1}.
The results are almost exactly the same as this example, and so have not been included.

In our numerical experiments for this problem we consider $q = 4/3, 2, 3$,
and vary the approximation parameters as follows.
The truncation dimensions tested are given by $s = 2, 4, 8, \ldots, 256$; we 
use uniform triangular FE meshes with $h = 1/4, 1/8, 1/16, \ldots, 1/1024$; and
the number of quadrature points is given by $N =  $ 31, 61, 127, 251, 503, 997, 1999, 4001, 8009, 16001.

Regarding the different QMC convergence rates to expect,
for $q = 4/3$ we have that the sequence $\bsbeta$ is $p$-summable for
$p > 3/4$, whereas for the faster decays
of $q = 2, 3$ we have that $\bsbeta$ is summable with exponent 
$p > 1/2$ and $p > 1/3$, respectively. 
Based on these restrictions on $p$,
for each $N$ we construct a generating vector by the CBC algorithm 
using weights given by \eqref{eq:gamma_num}
with $\eta = 2 - 1/q = 5/4$ for $q = 4/3$, and $\eta = 4/3$ for $q = 2, 3$.

First, we study the truncation error by varying $s$, while keeping
$h = 1/512$ and $N = 8009$ fixed (using a
single fixed shift for all values of $s$). The results are
given in Figure~\ref{fig:trunc}. The errors are estimated by 
comparing each result with a fine solution with truncation dimension
512.
Theorem~\ref{thm:total_error} predicts that the truncation errors converge like
$s^{-5/3}$, $s^{-3}$ and $s^{-5}$ for $q = 4/3, 2, 3$, respectively.
As a guide, these expected rates are given by the dashed lines in Figure~\ref{fig:trunc}.
Observe that for all cases the estimated truncation errors closely follow
the expected convergence.

In Figure~\ref{fig:FE_err} we present results for the FE error convergence
for $q = 4/3$ ($p \approx 3/4$).
Again, to isolate the FE component we vary $h$, but fix $s = 128$ and $N = 8009$ 
(using a single fixed shift for all values of $h$). 
Then we estimate the errors by comparing with a fine
solution that uses a meshwidth of $1/2048$.
Theorem~\ref{thm:total_error} predicts that  the FE error converges
like $h^2$, which is clearly observed. The other cases, $q = 2, 3$, both exhibit
very similar errors and so have not been included.
 \begin{figure}[!h]
\begin{minipage}{0.49\textwidth}
\centering
 \includegraphics[width=\textwidth]{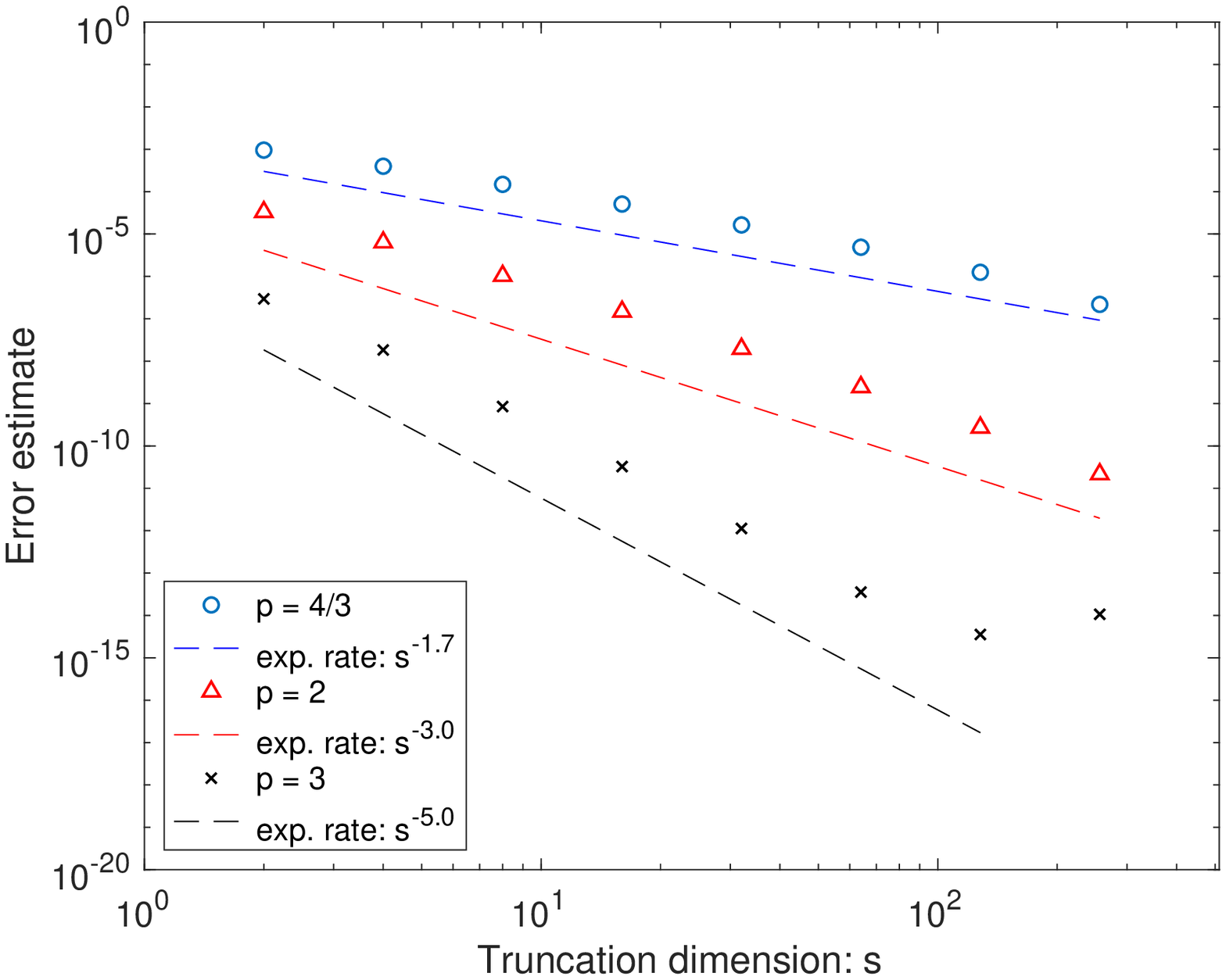}
 \caption{Dimension truncation error estimate for $q = 4/3, 2, 3$ ($p\approx 3/4, 1/2, 1/3$).}
 \label{fig:trunc}
  \end{minipage}
 \hfill
 \begin{minipage}{0.49\textwidth}
 \centering
 \includegraphics[width=\textwidth]{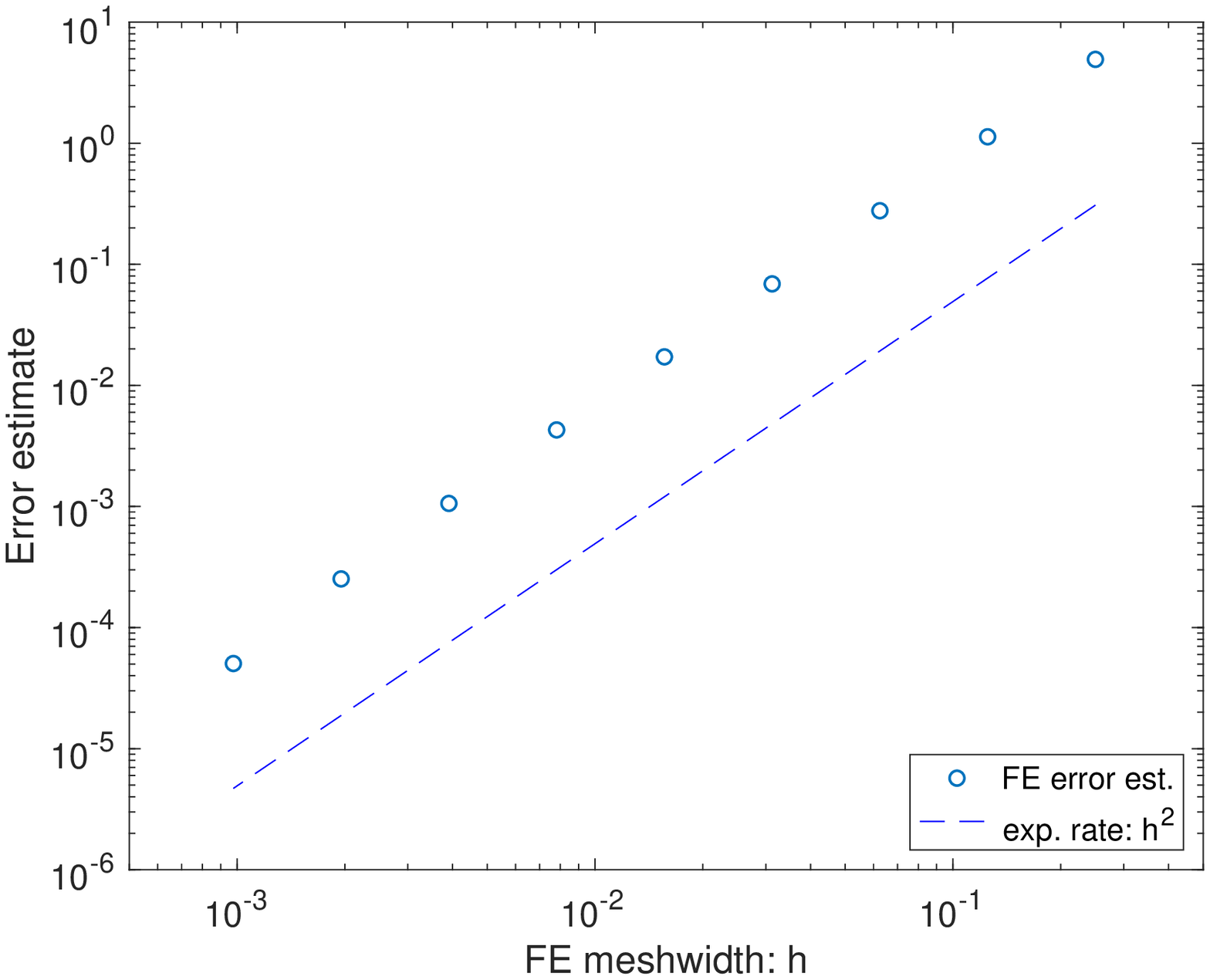}
 \caption{FE error estimate (right) for $q = 4/3, 2, 3$ ($p\approx 3/4, 1/2, 1/3$).}
 \label{fig:FE_err}
 \end{minipage}
 \end{figure}

To study the quadrature component of the error,
we fix $s = 256$, $h = 1/256$ and use $R = 8$ random shifts to estimate the RMS error
\eqref{eq:rmse_est}.
Figure~\ref{fig:QMC-MC_conv} plots the estimated RMS quadrature error
of our QMC approximation along with a Monte Carlo (MC) approximation
for comparison, for  $q = 4/3$ (left) and  $q = 2$ (right).
To fairly compare with the MC results, for each $N$
we present the estimated RMS error of a single randomly shifted lattice rule.
That is, we scale the RMS error estimate in \eqref{eq:rmse_est} 
by $\sqrt{R}$ so as to remove the extra $1/\sqrt{R}$ factor gained by random shifting.
In Figure~\ref{fig:QMC-MC_conv} the circle data points (crosses for MC) represent
the estimated RMS errors, the dashed lines portray the expected convergence rates
and the solid lines are a least-squares fit.
As a guide, we have plotted the FE error one can expect for 
$h = 2^{-8}, 2^{-9}, 2^{-10}$ as three red dotted lines (we use the results used to generate 
Figure~\ref{fig:FE_err}), note that the vertical heights of the lines
decrease as $h$ decreases.

From Theorem~\ref{thm:total_error}, for $q = 4/3$ (because $p> 2/3$)
we are in the regime where the convergence is limited and so expect a
rate of around $N^{-5/6}$.
However, for $q = 2$ the rate is not restricted and we expect QMC convergence
close to $N^{-1}$.
Observe that the least-squares fits match very closely to the expected rates.
Also, as is expected for a problem of this smoothness QMC
significantly outperforms the MC approximations, which decay 
at the anticipated rate of $1/\sqrt{N}$.
Comparing with the expected FE errors (the red dotted lines), we can see 
that for this example the dominant component of the error is the FE error,
this is to be expected and has been observed in source problems also.
Despite this, we have continued with QMC approximations that have errors below the 
FE error to demonstrate that the asymptotic convergence rates 
predicted by Theorem~\ref{thm:total_error} are achieved in practice.
For the truncation error, taking $s = 256$ results in errors of approximately $10^{-7}$ and
$10^{-10}$ for $q = 4/3$ and 2, respectively, which are at least an order of magnitude less than
the smallest quadrature errors in each case.
 
 \begin{figure}[!h]
 \centering
 \includegraphics[width=.48\textwidth]{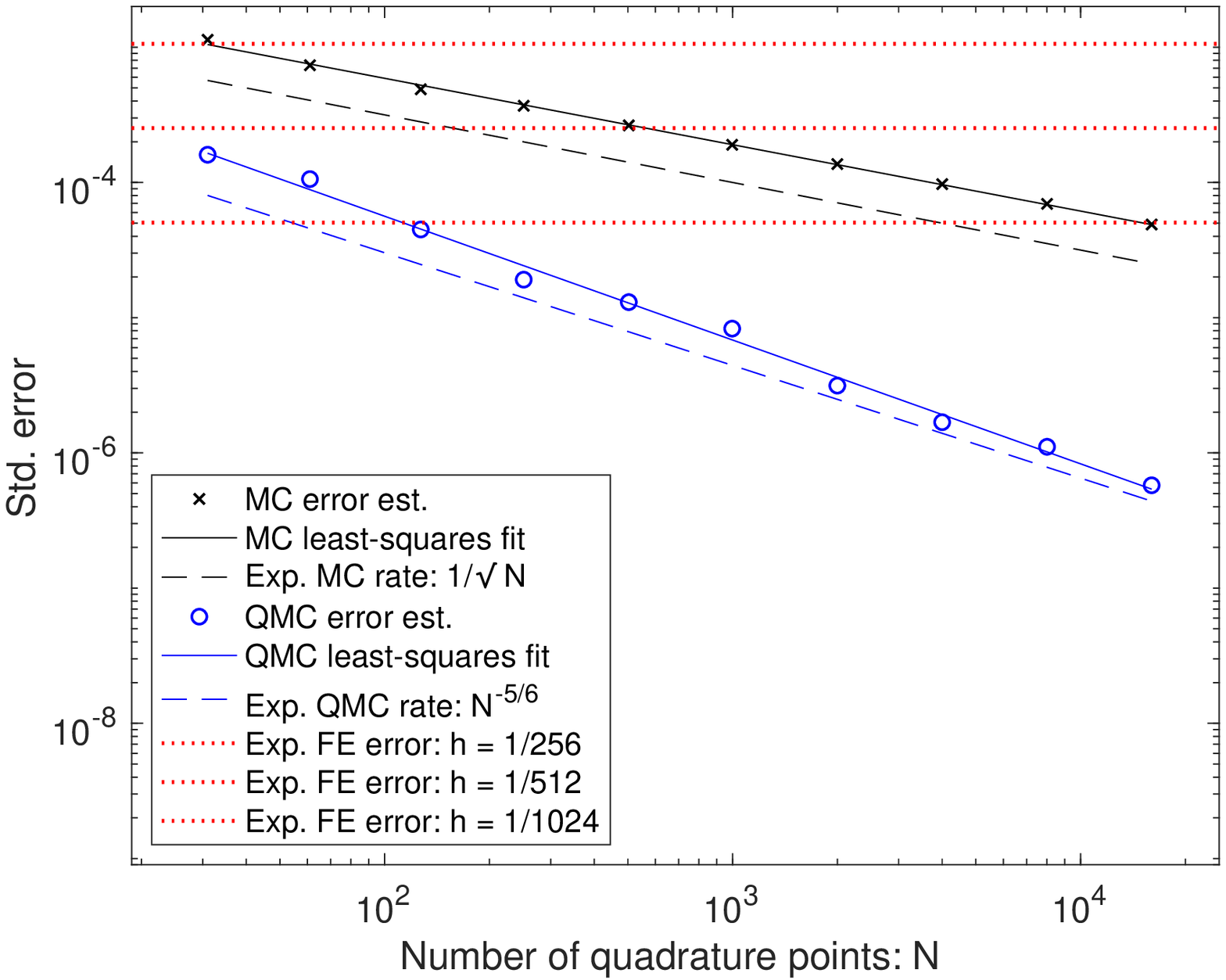}
 \hfill
 \includegraphics[width=.48\textwidth]{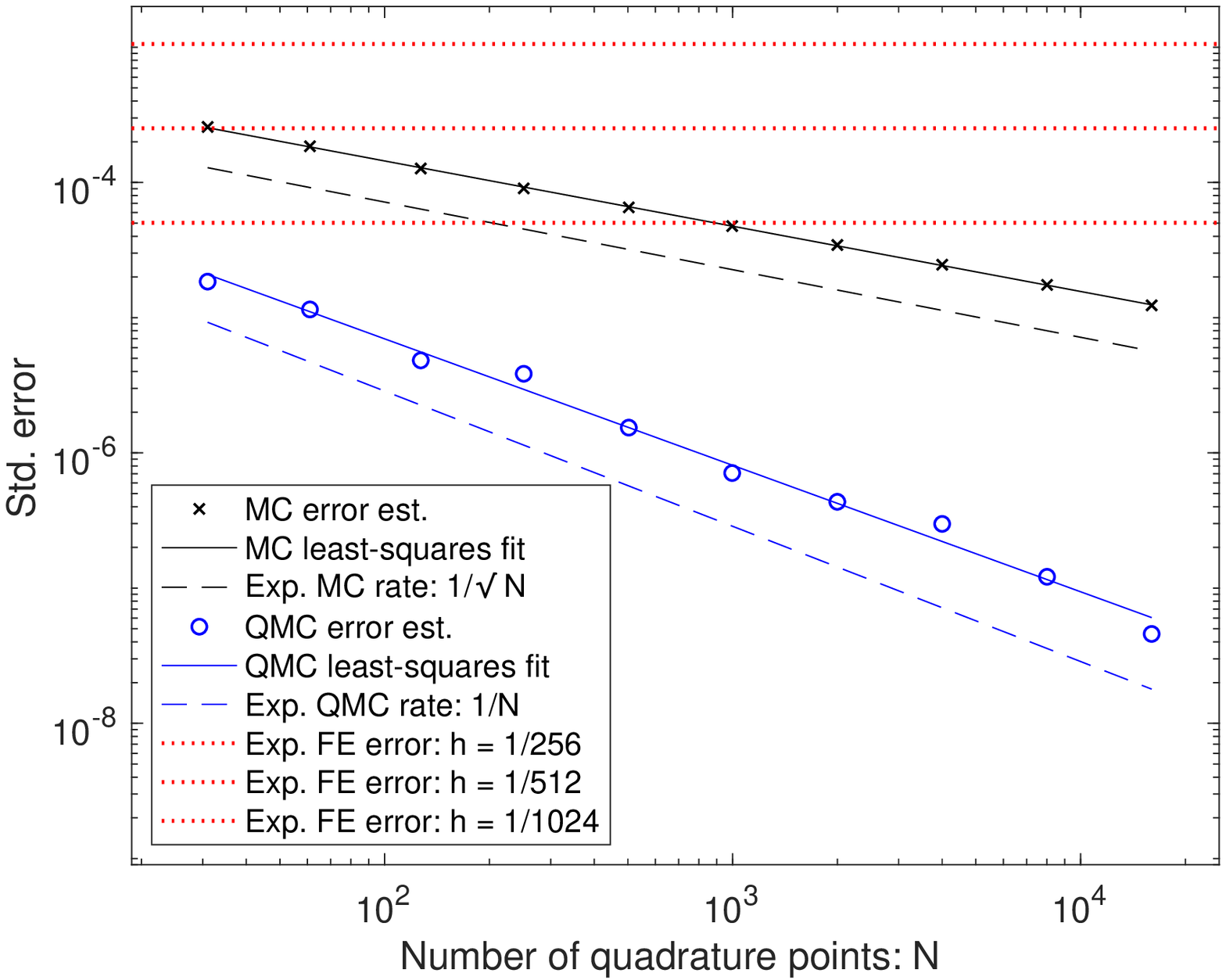}
 \caption{QMC and MC convergence for $q = 4/3$, $p\approx 3/4$ (left) and $q = 2$, $p \approx 1/2$ (right).}
 \label{fig:QMC-MC_conv}
 \end{figure}

For completeness, the computed values of 
the RMS error estimates for the shift-averaged estimate $\widehat{Q}_{R, N, s}\lambda_{s, h}$
and the convergence rates for $q = 4/3, 2$ and 3 
are given below in Table~\ref{tab:evp1_qmc}. We use the notation
``e'' to denote the base 10 exponent.
The least-squares computed rates are $-0.878$, $-0.992$ 
and $-1.01$ for $q = 4/3, 2, 3$, respectively, which are very close to the expected rates
of $-5/6\approx -0.83$, $-1$, and $-1$ from the theory. \\

\begin{table}[!h]
\centering
\caption{QMC RMS error estimates for $q = 4/3, 2, 3$.}
\label{tab:evp1_qmc}
\begin{tabular}{rr||c|c|c}
& & \multicolumn{3}{c}{RMS error estimate of $\widehat{Q}_{R, N, s}\lambda_{s, h}$}\\
$N$ & $R\times N$ & $q = 4/3$ ($p \approx 3/4$) & $q = 2$ ($p \approx 1/2$) & $q = 3$ ($p \approx 1/3$)
\\ 
\hline
251 & 2008 & 6.8\,e$-6$ & 1.4\,e$-6$ & 4.8\,e$-8$
\\
503 & 4024 & 4.6\,e$-6$ & 5.4\,e$-7$ & 1.8\,e$-8$
\\
997 & 7976 & 2.9\,e$-6$ & 2.5\,e$-7$ & 7.3\,e$-9$
\\
1999 & 15992 & 1.1\,e$-6$ & 1.5\,e$-7$ & 5.8\,e$-9$
\\
4001 & 32008 & 6.0\,e$-7$ & 1.1\,e$-7$ & 2.5\,e$-9$
\\
8009 & 64072 & 3.9\,e$-7$ & 4.3\,e$-8$ & 1.1\,e$-9$
\\
16001 & 128008 & 2.0\,e$-7$ & 1.6\,e$-8$ & 6.4\,e$-10$
\\
\hline 
\multicolumn{2}{r||}{Estimated rate} & 
$-0.878$ & $-0.992$ & $-1.01$
\end{tabular}
\end{table}

\subsection{Problem 2}
For our second example we consider more complicated coefficients, chosen
to represent a problem where the physical domain is composed of different materials,
e.g., in a simple model of a nuclear reactor the domain is composed of fuel rods surrounded by 
coolant (a gas/water mixture).
In particular, we allow the right hand side coefficient $c$ to be stochastic, and zero on
parts of the domain.
Although this problem does not satisfy Assumption~A\ref{asm:coeff}, 
we have included it to illustrate that our method also works well for a larger class of eigenvalue problems than what we were able to analyse theoretically.

In order to focus on the behaviour of the QMC error, in this numerical 
experiment we fix the truncation dimension at $s = 100$ and
FE meshwidth at $h = 1/256$, and we vary
the number of QMC points $N$.
Also, we set the number of random shifts to be $R = 8$.

Again the domain is $D = (0, 1)^2$ and we define $\bsx \coloneqq (x_1, x_2)$, but now the coefficients 
have different basis expansions for two different components of the domain 
depicted in Figure~\ref{fig:islands},  where $D$ is separated
into the union of four islands (in grey and labelled $\Df$) and the area around the islands 
$D \setminus \Df$ (in white). 
We define $\Df$  by
\[
\Df \,\coloneqq\, [\tfrac{1}{8}, \tfrac{3}{8}]^2 
\cup [\tfrac{5}{8}, \tfrac{7}{8}]^2
\cup [\tfrac{1}{8}, \tfrac{3}{8}] \times [\tfrac{5}{8}, \tfrac{7}{8}]
\cup [\tfrac{5}{8}, \tfrac{7}{8}] \times [\tfrac{1}{8}, \tfrac{3}{8}]\,.
\]
Since we use a uniform triangular  FE mesh with $h = 1/256$
the FE triangulation aligns with the boundaries of the components $\Df$.
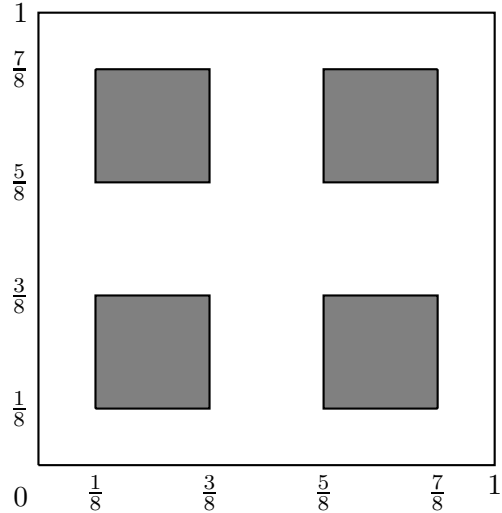
\begin{figure}[!h]
\centering
\begin{tikzpicture}[scale=6]
        \draw [thick] (0, 0) -- (1, 0) -- (1, 1) -- (0, 1) -- (0, 0);
        \fill[gray] (0.125, 0.125) rectangle (0.375, 0.375);
        \fill[gray] (0.625, 0.125) rectangle (0.875, 0.375);
        \fill[gray] (0.125, 0.625) rectangle (0.375, 0.875);
        \fill[gray] (0.625, 0.625) rectangle (0.875, 0.875);
        \draw [thick] (0.125, 0.125) -- (0.375, 0.125) -- (0.375, 0.375) -- (0.125, 0.375) -- (0.125, 0.125);
        \draw [thick] (1-0.125, 0.125) -- (1-0.375, 0.125) -- (1-0.375, 0.375) -- (1-0.125, 0.375) -- (1-0.125, 0.125);
        \draw [thick] (0.125, 1 - 0.125) -- (0.375, 1 - 0.125) -- (0.375, 1 - 0.375) -- (0.125, 1 - 0.375) -- (0.125, 1 - 0.125);
              \draw [thick] (1-0.125, 1 - 0.125) -- (1-0.375, 1 - 0.125) -- (1-0.375, 1 - 0.375) -- (1-0.125, 1 - 0.375) -- (1-0.125, 1 - 0.125);
        \draw (0, 1) node [left] {1};
        \draw(1, 0) node [below] {1};
        \draw(0, -0.07)  node [left] {0};
        \draw(0.125, 0) node [below] {$\tfrac{1}{8}$};
        \draw(0, 0.125) node [left] {$\tfrac{1}{8}$};
        \draw(0.375, 0) node [below] {$\tfrac{3}{8}$};
        \draw(0, 0.375) node [left] {$\tfrac{3}{8}$};
        \draw(1 - 0.125, 0) node [below] {$\tfrac{7}{8}$};
        \draw(0, 1 - 0.125) node [left] {$\tfrac{7}{8}$};
        \draw(1 - 0.375, 0) node [below] {$\tfrac{5}{8}$};
        \draw(0, 1 - 0.375) node [left] {$\tfrac{5}{8}$};        
        
\end{tikzpicture}
\caption{Domain $D$ with four islands forming $\Df$ (in grey).}
\label{fig:islands}
\end{figure}

Again, we would like the coefficients to have the form of an artificial KL expansion,
but we would also like the flexibility to be able to specify different decays
and scalings for each coefficient on the different components.
To achieve this, for $k \in \N$, we define the following sequences of trigonometric  functions
\begin{align*}
w_{k}(q; \bsx) \,&=\, 
\begin{cases}
\displaystyle\frac{1}{1 + (k\pi )^{q}} \sin \big(8k \pi x_1\big) \sin\big(8(k + 1)\pi x_2\big)
& \text{for } \bsx \in \Df\,,\\
0 & \text{for } \bsx \in D \setminus \Df\,,\text{ and}
\end{cases}\\
w_{k}'(q; \bsx) \,&=\, \begin{cases}
0
& \text{for } \bsx \in \Df\,,\\
\displaystyle \frac{1}{1 + (k\pi )^{q}} \sin \big(8k\pi x_1\big) \sin\big(8(k + 1)\pi x_2\big) 
& \text{for } \bsx \in D \setminus \Df\,,
\end{cases}
\end{align*}
which have wavelengths chosen such that their zeros align with the boundaries of $\Df$.
Now we use these functions to define the basis functions of the coefficients by
\begin{alignat*}{2}
a_0(\bsx) \,&=\, 
\begin{cases}
\sigdiff \,\coloneqq\,  0.01 & \text{for } \bsx \in \Df\,,\\[1mm]
\sigdiff' \,\coloneqq\, 0.011 &  \text{for } \bsx \in D \setminus\Df\,,
\end{cases}
\quad
&&a_j(\bsx) \,=\, 
\begin{cases}
\sigdiff \,w_{(j + 1)/2}(q_a; \bsx) & \text{for $j$ odd}\,,\\[1mm]
\sigdiff' \,w'_{j/2}(q_a'; \bsx) & \text{for $j$ even},\\
\end{cases}
\\[1mm]
b_0(\bsx) \,&=\, 
\begin{cases}
\sigabs \,\coloneqq\,  2 & \text{for } \bsx \in \Df\,,\\[1mm]
\sigabs' \,\coloneqq\, 0.3 &  \text{for } \bsx \in D \setminus\Df\,,
\end{cases}
\quad
&&b_j(\bsx) \,=\, 
\begin{cases}
\sigabs \,w_{(j + 1)/2}(q_b; \bsx) & \text{for $j$ odd}\,,\\[1mm]
\sigabs' \,w'_{j/2}(q_b'; \bsx) & \text{for $j$ even},\\
\end{cases}
\end{alignat*}
where the parameters $q_a, q_a', q_b, q_b' \geq 4/3$ give the different
decays of the coefficients on the different areas of the domain.

The coefficients $a,\ b,$ and $c$ represent, respectively, diffusion, absorption and fission in the reactor. 
Motivated by reactors where fission only occurs in fuel rods, 
we assume that $c$ vanishes outside $\Df$.
Furthermore, we also let $c$ be stochastic and take the same form as $a$ and $b$.
We define
\begin{align*}
c_0(\bsy) \,=\, 
\begin{cases}
\sigfiss \,\coloneqq\, 2.5 & \text{for } \bsx \in \Df\,,\\[1mm]
0 & \text{for } \bsx \in D \setminus \Df\,,
\end{cases}
\qquad
c_j(\bsy) \,=\, 
\begin{cases}
\sigfiss\, w_{(j + 1)/2)}(2; \bsx) & \text{for } j \text{ odd},\\[1mm]
0 & \text{for } j \text{ even,}
\end{cases}
\end{align*}
and then
\begin{align*}
c(\bsx, \bsy) \,=\, 
\begin{cases}
\displaystyle \sigfiss + \sum_{j = 1}^\infty y_{(j + 1)/2}\, c_{j}(\bsx) 
& \text{for } \bsx \in \Df,\\
0 & \text{for } \bsx \in D \setminus \Df\,.
\end{cases}
\end{align*}

The factors $\sigdiff,\ \sigdiff',\ \sigabs,\ \sigabs',$ and $\sigfiss$ are chosen
so that the mean of $a$, $b$ and $c$ on the different components of the domain correspond
to physically relevant values for the respective cross-sections for a nuclear reactor (see \cite{VdB15}).

Also, notice that the coefficients $a$, $b$ and $c$ will be correlated.
The practical motivation for this is that
the randomness at a point is generated by uncertainty
in the material properties at that point, and thus will affect each coefficient in the same manner.
However, for $\bsx \in \Df$ and $\bsx' \in D \setminus \Df$ the values of the coefficients
$a(\bsx, \bsy)$, $b(\bsx, \bsy)$, $c(\bsx, \bsy)$ are not correlated with any of $a(\bsx', \bsy)$, $b(\bsx', \bsy)$,
$c(\bsx', \bsy)$.

We have that
\begin{align*}
\nrm{a_j}{L^\infty(D)} \,&=\,
\begin{cases}
\displaystyle\frac{\sigdiff}{1 + (\frac{j + 1}{2}\pi)^{q_a}} & \text{if $j$ is odd,}\\[4mm]
\displaystyle\frac{\sigdiff'}{1 + (\frac{j}{2}\pi)^{q_a'}} & \text{if $j$ is even,}
\end{cases}
\quad \text{and}\\
\nrm{b_j}{L^\infty(D)} \,&=\,
\begin{cases}
\displaystyle\frac{\sigabs}{1 + (\frac{j + 1}{2}\pi)^{q_b}} & \text{if $j$ is odd,}\\[4mm]
\displaystyle\frac{\sigabs'}{1 + (\frac{j}{2}\pi)^{q_b'}} & \text{if $j$ is even.}
\end{cases}
\end{align*}
Letting $q \coloneqq \min(q_a, q_a', q_b, q_b')$, a quick calculation
gives that
\[
\max\left(\nrm{a_j}{L^\infty(D)},\ \nrm{b_j}{L^\infty(D)}\right)
\,<\, \frac{\max(\sigdiff,\ \sigdiff',\ \sigabs,\ \sigabs')}{(\pi j/2)^q}
\,=\, \frac{2}{(\pi j/2)^q}\,.
\]
Hence, since $q \geq 4/3$ the coefficients are bounded from above and below independently of $\bsy$,
and we can take 
\[
\amin =  0.01\left(1 - \zeta(q) \left(\frac{2}{\pi}\right)^q\right)
\quad \text{and} \quad
\amax = 2.5\left(1 + \zeta(q) \left(\frac{2}{\pi}\right)^q\right)\,.
\]
Again, Assumption~A\ref{asm:coeff}.\ref{itm:summable} holds for $p \in (1/q, 1)$, and
so for this example the convergence rate will be determined by the minimum
of the four rates $q_a, q_a', q_b, q_b'$.

For this example, we also provide results for a linear functional 
applied to $u_1(\bsy)$. We define $\calG \in V^*$ to be the linear functional that computes
the average neutron flux over one of the islands $\Dff \coloneqq [\tfrac{1}{8}, \tfrac{3}{8}]^2$, 
which is given by
\[
\calG(u_1(\bsy)) \,\coloneqq\, \frac{1}{|\Dff|} \int_{\Dff} u_1(\bsx, \bsy) \,\rd \bsx\,.
\]

The results for this example for different combinations of values 
$q_a,\ q_a',\ q_b,\ q_b' \in \{4/3, 2\}$ are presented in Table~\ref{tab:eg2_1}.
Each column corresponds to a different choice of decays, and presents results
for both the eigenvalue and $\calG$ applied to the eigenfunction.
We present RMS error estimates for increasing $N$, followed by 
the estimated convergence rate.
Recall that $R= 8$ is the number of random shifts used.

\begin{table}[!h]
\centering
\caption{Quadrature results for the approximation of $\bbE_\bsy[\lambda_1]$ 
and $\bbE_\bsy[\calG(u_1)]$ for different decays.}
\label{tab:eg2_1}
\begin{tabular}{rr||c|c||c|c||c|c}
 & & \multicolumn{6}{|c}{RMS error estimate}\\
\multicolumn{2}{r||}{$q_a,\ q_a',\ q_b,\ q_b'$} &
\multicolumn{2}{|c||}{4/3, 4/3, 4/3, 4/3} & \multicolumn{2}{|c||}{2, 4/3, 2, 4/3} & \multicolumn{2}{|c}{2, 2, 2, 2}
\\
\hline
\hline
$N$ & $R \times N$ & 
$\lambda_1$ & $\calG(u_1)$ & $\lambda_1$ & $\calG(u_1)$ & $\lambda_1$ & $\calG(u_1)$
\\
\hline
251 & 2008 & 
2.4\,e$-7$ & 5.1\,e$-6$ &
6.7\,e$-7$ & 5.3\,e$-6$ &
6.4\,e$-7$ & 5.7\,e$-6$
\\
503 & 4024 & 
9.7\,e$-8$ & 2.8\,e$-6$ &
3.6\,e$-7$ & 3.0\,e$-6$ &
4.5\,e$-7$ & 3.0\,e$-6$
\\
997 & 7976  &
9.1\,e$-8$ & 1.0\,e$-6$ &
2.9\,e$-7$ & 8.4\,e$-7$& 
2.7\,e$-7$ & 8.4\,e$-7$
\\
1999 & 15992  &
3.9\,e$-8$ & 6.9\,e$-7$ &
1.4\,e$-7$ & 6.9\,e$-7$ &
1.3\,e$-7$ & 7.1\,e$-7$
\\
4001 & 32008 &
2.0\,e$-8$ & 3.8\,e$-7$ &
4.2\,e$-8$ & 3.6\,e$-7$ & 
3.8\,e$-8$ & 3.6\,e$-7$
\\
8009 & 64072 &
1.4\,e$-8$ & 1.9\,e$-7$ &
5.1\,e$-8$ & 1.8\,e$-7$ & 
4.8\,e$-8$ & 1.8\,e$-7$
\\
16001 & 128008 &
1.1\,e$-8$ & 7.4\,e$-8$ &
1.7\,e$-8$ & 7.1\,e$-8$ & 
9.0\,e$-9$ & 7.5\,e$-8$
\\
\hline
\hline
\multicolumn{2}{r||}{Estimated rate} &
$-0.748$ & $-0.983$ &
$-0.871$ & $-0.997$ & 
$-0.993$ & $-1.003$
\end{tabular}
\end{table}

Even though this example does not satisfy the conditions of
our theory, we still obtain good
convergence rates for the QMC error. 
For the first two columns of Table~\ref{tab:eg2_1}, at least one decay has the value $4/3$, hence,
we expect summability with $p \approx 3/4$ and a convergence rate of $-5/6 \approx -0.83$.
When $q_a = q_a' = q_b = q_b' = 4/3$ (first column in Table~\ref{tab:eg2_1}), the eigenvalue error 
decays slower than $N^{-5/6}$ but the eigenfunction error still decays like $N^{-1}$.
Whereas in column 2, for the eigenvalue approximation,
we observe a QMC convergence rate that is slower than $N^{-1}$, but which is
still faster than the expected rate of $-5/6 \approx -0.83$.
Surprisingly, for the eigenfunction results we observe a QMC convergence rate 
that is almost $N^{-1}$ for all of our test cases, regardless of the decays.
When $q_a  = q_a' = q_b = q_b' = 2$ (last column in Table~\ref{tab:eg2_1}) 
we would expect summability with $p\approx 1/2$
and convergence arbitrarily close to $N^{-1}$, which is observed in the errors for both the 
eigenvalue and eigenfunction approximations.
For all of the other cases of $q_a,\ q_a',\ q_b,\ q_b' \in \{4/3, 2\}$ not presented, 
some but not all of the decays equal $4/3$ and 
we observe very similar results to column 2 of Table~\ref{tab:eg2_1}. In particular, the
convergence rates are almost identical. 

\section{Conclusion}
\label{sec:con}
We have presented a truncation-FE-QMC algorithm for approximating the expectation
of the smallest eigenvalue, and linear functionals of the corresponding eigenfunction,
of a stochastic eigenvalue problem. Along with the method, we have presented a full
analysis of the three components of the error and the final error bound has
the same decay rates as the corresponding elliptic source problem for all 
$p \in (0, 1)$. Throughout the analysis we also proved two key results. First,
we proved that the eigenvalue gap is bounded away from 0 uniformly in $\bsy$.
Second, we derived bounds on the derivatives of the smallest eigenvalue and the corresponding
eigenfunction.

Our first numerical example presents results that match almost exactly with what is predicted
by our theoretical analysis.
The second example goes beyond our theoretical setting and illustrates that the method is more
robust than our theory suggests.

Regarding future work, one possible avenue is to use higher order QMC rules
(which converge at a rate faster than $N^{-1}$) to approximate the expectation of the
minimal eigenvalue.
The use of higher order QMC rules requires higher order smoothness of the integrand,
but we have already proven bounds on the higher order derivatives of $\lambda_1$ (and $u_1$)
in Lemma~\ref{lem:dlambda}.
As such, we expect that the theory for using higher order QMC rules for
this eigenvalue problem should work quite easily.
Of course, to balance the faster quadrature convergence with the
discretisation error one should use a more accurate FE method,
which would be more challenging if the domain $D$ was not either convex or had
a smooth boundary.
Additionally, one could consider embedding our truncation-FE-QMC algorithm in a
multilevel framework, which we expect would significantly reduce the cost but would
also require further theoretical analysis.

\medskip
\noindent {\bf Acknowledgement.} \  We thank John Toland (University of Bath) for a very useful discussion which helped us formulate the proof in \S \ref{subsec:gap}.  
We gratefully acknowledge the financial support from the Australian Research Council (FT130100655, DP150101770, DP180101356), the Taiwanese National Center for Theoretical Sciences (NCTS) -- Mathematics Division, the Statistical and Applied Mathematical Sciences Institute (SAMSI) 2017 Year-long Program on Quasi-Monte Carlo and High-Dimensional Sampling Methods for Applied Mathematics, and the Mathematical Research Institute (MATRIX) 2018 program on the Frontiers of High Dimensional Computation.

\newpage
\begin{appendix}
\normalsize

\section{Proof of Theorem \ref{thm:fe_err}}

This proof follows the same structure as \cite{SF73} to bound the FE error, 
but in addition we have to track
the dependence of all constants on $\bsy$. We have opted for the
classical min-max argument as opposed to the Babu\v{s}ka--Osborn theory 
\cite{BO87,BO89,BO91}, because it is more elementary and allows us to determine 
the explicit influence of the constants.
We begin with some preliminary definitions.

For $\bsy \in U$ and $h > 0$, define the orthogonal projection 
$P_h : V \rightarrow V_h$ of $u \in V$ with respect to the inner
product $\calA(\bsy; \cdot, \cdot)$ on $V$ by
\[
\calA(\bsy; u - P_h u, v_h) \,=\, 0\,,
\quad \text{for all } v_h \in V_h\,.
\]
Although $P_h$ depends on $\bsy$ through the
bilinear form $\calA(\bsy; \cdot, \cdot)$, we suppress this $\bsy$
dependence in the notation. 
Let us denote the energy norm by $\nrm{\cdot}{\calA(\bsy)} = \sqrt{\calA(\bsy; \cdot,
\cdot)}$. Then it is easy to verify that due to the 
$\calA$-orthogonality of $P_h$ we have
\[
\nrm{u - P_h u}{\calA(\bsy)} \,=\, \inf_{v_h \in V_h} \nrm{u - v_h}{\calA(\bsy)}\,.
\]
Due to \eqref{eq:A_coerc} and \eqref{eq:A_bounded} the energy norm
 is equivalent to the $V$-norm:
\begin{align}
\label{eq:A-V-norm_equiv}
\sqrt{\amin}\nrm{v}{V} \,\leq\, \nrm{v}{\calA(\bsy)} \,\leq\,
\sqrt{\frac{\amin\lambdabar}{\chi_1}}
\nrm{v}{V}\,,
\quad \text{for all } v \in V.
\end{align}

Analogously to the min-max principle \eqref{eq:minmax},
when the $k$-dimensional subspaces $S_k$ are restricted to $V_h$ we have
the min-max representation for the FE eigenvalues
\begin{align}
\label{eq:minmax_fe}
\lambda_{k, h}(\bsy) \,=\, \min_{\substack{S_k \subset V_h\\ \dim(S_k) = k}}
\max_{0 \neq u_h \in S_k} \frac{\calA(\bsy; u_h, u_h)}{\calM(u_h, u_h)}\,.
\end{align}

The strategy of the proof of Theorem~\ref{thm:fe_err} is to first bound the difference between $u(\bsy)$ and its 
projection $P_h u(\bsy)$, which is fairly straightforward and follows from the
FE results for elliptic source problems. The difficulty lies in the fact that
the projections $P_h u(\bsy)$ are not the same as
the FE eigenfunctions $u_h(\bsy)$.
However, they are close. The next stage of the proof is to bound the
eigenvalue and eigenfunction errors \eqref{eq:lambda_fe_err}, \eqref{eq:u_fe_err}
in terms of the projection error.
For the eigenvalue error a key ingredient is the classical min-max principle \eqref{eq:minmax_fe}.
Combining the FE  error bounds with the projection error bounds
yields the required results.

\begin{Lemma}
\label{lem:proj_err}
Let $\bsy \in U$. The projection of $u_1(\bsy) \in E(\bsy,
\lambda_1(\bsy)) \subset V$ into $V_h$ satisfies
\begin{align}
\label{eq:proj_err_V}
\nrm{u_1(\bsy) - P_h u_1(\bsy)}{V} \,&\leq\, C h\,,
\end{align}
where $C > 0$ is independent of $\bsy$.
\end{Lemma}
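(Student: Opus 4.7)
The plan is to combine the Galerkin best-approximation property of $P_h$ with the $H^2$ regularity of the eigenfunction established in Proposition~\ref{prop:H^2}, while taking care that every constant encountered is independent of $\bsy$. The whole argument is standard once the $\bsy$-dependence of the constants is tracked, so the main challenge is bookkeeping rather than novelty.

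First I would exploit the $\calA(\bsy;\cdot,\cdot)$-orthogonality of $P_h$, which gives the C\'ea-type identity
\[
\nrm{u_1(\bsy) - P_h u_1(\bsy)}{\calA(\bsy)} \,=\, \inf_{v_h \in V_h} \nrm{u_1(\bsy) - v_h}{\calA(\bsy)}.
\]
Then I would pass from the energy norm to the $V$-norm using the equivalence in \eqref{eq:A-V-norm_equiv}, whose constants $\sqrt{\amin}$ and $\sqrt{\amin\lambdabar/\chi_1}$ are explicitly independent of $\bsy$. This yields
\[
\nrm{u_1(\bsy) - P_h u_1(\bsy)}{V} \,\leq\, \sqrt{\frac{\lambdabar}{\chi_1}} \inf_{v_h \in V_h} \nrm{u_1(\bsy) - v_h}{V}.
\]

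Next I would invoke the finite element approximation property \eqref{eq:fe_approx} with $t = 1$, which controls the best-approximation error in $V$ by $Ch \nrm{u_1(\bsy)}{Z}$. The $H^2$ regularity of the eigenfunction, together with the bound $\nrm{u_1(\bsy)}{Z} \le C\lambda_1(\bsy)$ from Proposition~\ref{prop:H^2}, then reduces the problem to bounding $\lambda_1(\bsy)$ uniformly in $\bsy$. This last step is immediate from the upper bound $\lambda_1(\bsy) \leq \lambdabar$ proved in \eqref{eq:lambda_bnd}.

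Chaining these estimates together produces
\[
\nrm{u_1(\bsy) - P_h u_1(\bsy)}{V} \,\leq\, \sqrt{\frac{\lambdabar}{\chi_1}} \cdot Ch \cdot \nrm{u_1(\bsy)}{Z} \,\leq\, C' h\, \lambdabar,
\]
with $C'$ depending only on the domain and the structural constants $\amin, \amax, \chi_1$. Since every factor on the right is $\bsy$-independent, this gives the claimed bound. There is no real obstacle here; the only subtlety is ensuring that the constant from the $Z$-regularity bound in Proposition~\ref{prop:H^2} does not hide a $\bsy$-dependent factor, but inspection of that proof shows it depends only on $\amin$, $\amax$ and the lower bound $\lambdaunder$ of $\lambda_1(\bsy)$, all of which are uniform in $\bsy$.
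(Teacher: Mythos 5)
Your proof is correct, and it reaches the bound by a somewhat different (and more self-contained) route than the paper's. The paper instead observes that $P_h u_1(\bsy)$ is precisely the finite element solution of the source problem $\calA(\bsy;\cdot,v)=\langle f(\bsy),v\rangle$ with the \emph{fixed} right-hand side $f(\bsy)=\lambda_1(\bsy)\,c\,u_1(\bsy)\in L^2(D)$, and then cites Theorem~2.4 of \cite{DKLeGNS14} (FE convergence for affine parametric operator equations) to obtain $\nrm{u_1(\bsy)-P_h u_1(\bsy)}{V}\le C'\nrm{f(\bsy)}{L^2(D)}\,h$, concluding with $\nrm{f(\bsy)}{L^2(D)}\le \amax^{1/2}\lambdabar$. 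You instead chain the C\'ea identity, the norm equivalence \eqref{eq:A-V-norm_equiv}, the approximation property \eqref{eq:fe_approx} with $t=1$, and the eigenfunction regularity bound of Proposition~\ref{prop:H^2}. The two arguments rest on the same underlying elliptic regularity (the cited theorem hides a duality/regularity step that is equivalent to what Proposition~\ref{prop:H^2} provides directly), so neither is stronger; the gain of your version is that it uses only results already established in the paper and makes the $\bsy$-uniformity of every constant explicit, whereas the paper outsources that step to the external reference. Your closing remark about checking that the constant in Proposition~\ref{prop:H^2} depends only on $\amin$, $\amax$ and $\lambdaunder$ is exactly the verification needed.
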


\begin{proof}
The projection $P_h u_1(\bsy)$ can be equivalently
viewed as the FE approximation of an elliptic source problem.
Indeed, the variational eigenproblem \eqref{eq:varevp} 
for the eigenpair $(\lambda_1(\bsy), u_1(\bsy))$ can be written as
\[
\calA(\bsy; u_1(\bsy), v) \,=\,
\langle f(\bsy), v\rangle
\quad \text{for all } v \in V\,,
\]
where $f(\bsy) =\lambda_1(\bsy)c\cdot u_1(\bsy)$
is now assumed fixed. The FE approximation problem 
of this seeks
$\tilde{u}_h(\bsy) \in V_h$ such that
\[
\calA(\bsy; \tilde{u}_h(\bsy), v_h) \,=\,
\left\langle f(\bsy), v_h
\right\rangle
\quad \text{for all } v_h \in V_h\,,
\]
for which due to $\calA$-orthogonality the solution is exactly
the projection of the eigenfunction: $\tilde{u}_{h}(\bsy) = P_h u_1(\bsy)$.
This allows us to bound the projection error using the results from
elliptic source problems.
In particular, our differential operator fits the setting of 
affine parametric operator equations from \cite{DKLeGNS14}.
Since $u_1(\bsy) \in Z$, it follows that $f(\bsy) \in L^2(D)$ for all $\bsy$.
The spaces $V_h$ satisfy the approximation property \eqref{eq:fe_approx}, thus
by Theorem~2.4 in \cite{DKLeGNS14} we have
\begin{equation}
\label{eq:proj_err_V_1}
\nrm{ u_1(\bsy) - P_h u_1(\bsy)}{V}
\,\leq\,
C'\nrm{f(\bsy)}{L^2(D)}h\,,
\end{equation}
with constant $C'$ independent of $\bsy$ and $h$.

To bound $\nrm{f(\bsy)}{L^2(D)}$, 
we use the upper bound in \eqref{eq:lambda_bnd},
the bound \eqref{eq:coeff_bounded} on $c$
and then the fact that $\nrm{u_1(\bsy)}{\calM} = 1$ to give
\[
\nrm{f(\bsy)}{L^2(D)}\,\leq\, 
\lambda_1(\bsy) \nrm{c}{L^\infty(D)}^{1/2}\nrm{u_1(\bsy)}{\calM}
\,\leq\,
\amax^{1/2} \lambdabar
\]
Substituting this into \eqref{eq:proj_err_V_1} we have our desired result
with a constant $C$ independent of $\bsy$ and $h$.
\end{proof}

We now estimate the eigenvalue error (Lemma~\ref{lem:eval_proj_err})
and the eigenfunction error (Lemma~\ref{lem:evec_proj_err}) in terms of the
projection error that was estimated in Lemma~\ref{lem:proj_err}.
Lemma~\ref{lem:eval_sep} relates to the gap between
the FE eigenvalues and $\lambda_1(\bsy)$, and is used in the 
proof of Lemma~\ref{lem:evec_proj_err}.
\begin{Lemma}
\label{lem:eval_proj_err}
Let $\bsy \in U$ and let $h > 0$ be sufficiently small independently of $\bsy$.
Then
\begin{align}
\label{eq:eval_proj_err}
\left|\lambda_1(\bsy) - \lambda_{1, h}(\bsy)\right|
\,\leq\, C\nrm{u_1(\bsy) - P_h u_1(\bsy)}{V}^2\,,
\end{align}
where $C> 0$ is independent of $\bsy$.
\end{Lemma}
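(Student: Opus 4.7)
The plan is to proceed in the classical min--max style. First I would observe that, since $V_h \subset V$, the restricted min--max representation \eqref{eq:minmax_fe} is bounded below by the unrestricted one \eqref{eq:minmax}, so $\lambda_{1,h}(\bsy) \geq \lambda_1(\bsy)$ for every $\bsy \in U$. Consequently $|\lambda_1(\bsy) - \lambda_{1,h}(\bsy)| = \lambda_{1,h}(\bsy) - \lambda_1(\bsy)$, and it suffices to produce an upper bound. For this, I would take the one-dimensional trial subspace $\mathrm{span}\{P_h u_1(\bsy)\} \subset V_h$ in \eqref{eq:minmax_fe}, yielding
\begin{equation*}
\lambda_{1,h}(\bsy) \,\leq\, \frac{\calA(\bsy; P_h u_1(\bsy), P_h u_1(\bsy))}{\calM(P_h u_1(\bsy), P_h u_1(\bsy))}\,,
\end{equation*}
provided the denominator is nonzero, which will follow from the next step.

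The key algebraic step is the following identity, which I would prove by direct expansion using the $\calA$-orthogonality of $P_h$ (so $\calA(\bsy; u_1 - P_h u_1, P_h u_1) = 0$) together with the eigenvalue relation $\calA(\bsy; u_1, v) = \lambda_1 \calM(u_1, v)$ for all $v \in V$ and the normalisation $\calM(u_1, u_1) = 1$:
\begin{equation*}
\calA(\bsy; P_h u_1, P_h u_1) - \lambda_1(\bsy)\,\calM(P_h u_1, P_h u_1)
\,=\, \nrm{u_1 - P_h u_1}{\calA(\bsy)}^2 - \lambda_1(\bsy)\,\nrm{u_1 - P_h u_1}{\calM}^2\,.
\end{equation*}
Dropping the nonpositive $\calM$-term on the right and combining with the Rayleigh quotient bound gives
\begin{equation*}
\lambda_{1,h}(\bsy) - \lambda_1(\bsy) \,\leq\, \frac{\nrm{u_1 - P_h u_1}{\calA(\bsy)}^2}{\calM(P_h u_1, P_h u_1)}\,.
\end{equation*}

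Next I would handle the denominator. Since $\calM(u_1, u_1) = 1$, expanding $\calM(P_h u_1, P_h u_1) = 1 - \calM(u_1 + P_h u_1, u_1 - P_h u_1)$ and estimating the perturbation via Cauchy--Schwarz together with the norm equivalence \eqref{eq:nrms_equiv} and Poincar\'e \eqref{eq:poin} produces
\begin{equation*}
\left|\calM(P_h u_1, P_h u_1) - 1\right| \,\leq\, C\,\nrm{u_1 - P_h u_1}{V}\bigl(\nrm{u_1}{V} + \nrm{P_h u_1}{V}\bigr)\,,
\end{equation*}
and by Lemma~\ref{lem:proj_err} the right-hand side is $\mathcal{O}(h)$ uniformly in $\bsy$ (the factor $\nrm{P_h u_1}{V}$ is uniformly bounded thanks to \eqref{eq:u_bnd} and the $\calA$-boundedness of $P_h$). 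Thus for all $h$ below some $\bsy$-independent threshold we have $\calM(P_h u_1, P_h u_1) \geq \tfrac{1}{2}$. Finally, the norm equivalence \eqref{eq:A-V-norm_equiv} converts the energy norm on the right back into the $V$-norm, yielding \eqref{eq:eval_proj_err} with a constant independent of $\bsy$.

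The only real subtlety is carefully tracking that every constant (the norm-equivalence constants \eqref{eq:A-V-norm_equiv}, the projection error constant from Lemma~\ref{lem:proj_err}, the eigenvalue upper bound $\lambdabar$ from \eqref{eq:lambda_bnd}, and the threshold on $h$) is independent of $\bsy$, which is the running theme of the whole analysis and the reason the classical BO theory is not invoked verbatim. Everything needed is already uniform in $\bsy$, so no new obstacle arises beyond bookkeeping.
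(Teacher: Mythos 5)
Your proposal is correct and follows the same route as the paper: the min--max principle \eqref{eq:minmax_fe} with the one-dimensional trial space $\mathrm{span}\{P_h u_1(\bsy)\}$, combined with Lemma~\ref{lem:proj_err} and the uniform-in-$\bsy$ norm equivalences. The one genuine (if minor) difference is that you work with the exact identity $\calA(\bsy;P_h u_1,P_h u_1)-\lambda_1\calM(P_h u_1,P_h u_1)=\nrm{u_1-P_hu_1}{\calA(\bsy)}^2-\lambda_1\nrm{u_1-P_hu_1}{\calM}^2$, which delivers $\lambda_{1,h}-\lambda_1\le \nrm{u_1-P_hu_1}{\calA(\bsy)}^2/\calM(P_hu_1,P_hu_1)$ directly; the paper instead bounds numerator and denominator separately and therefore needs, as an extra final step, a $\bsy$- and $h$-uniform upper bound on $\lambda_{1,h}(\bsy)$ (obtained via convergence of the discrete Laplacian eigenvalue), a step your arrangement cleanly avoids.
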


\begin{proof}
To prove the result we apply the min-max principle to $\lambda_{1, h}(\bsy)$
and choose the particular subspace $S_{1, h}(\bsy) \coloneqq \mathrm{span} (P_h u_1(\bsy))$,
which is a one-dimensional subspace of $V$ provided that $P_h u_1(\bsy) \neq 0$.
To prove that $\text{dim}\big( S_{1, h}(\bsy) \big) = 1$ suppose for
contradiction that $P_h u_1(\bsy) = 0$, then by \eqref{eq:RQ}
\[
1 
\,=\, \frac{1}{\lambda_1(\bsy)} \nrm{u_1(\bsy)}{\calA(\bsy)}^2 \,=\, \frac{1}{\lambda_1(\bsy)}
\nrm{u_1(\bsy) - P_h u_1(\bsy)}{\calA(\bsy)}^2 \,.
\]
Then using the equivalence of norms \eqref{eq:A-V-norm_equiv}, together
with the lower bound in \eqref{eq:lambda_bnd} and Lemma
\ref{lem:proj_err} we get
\begin{equation}\label{eq:appendix_contra}
1 \,\le
\underbrace{
\frac{\amin\lambdabar}{\chi_1\lambdaunder}
}_{\textstyle
  =:C'}
\nrm{u_1(\bsy) - P_h u_1(\bsy)}{V}^2 \,\le\, C' C^2 h^2\,,
\end{equation}
where $C>0$ is the constant from Lemma \ref{lem:proj_err} and both $C$
and $C'$ are independent of $\bsy$ and $h$. So for $h < (\sqrt{C'}\,C)^{-1}$, this leads to a contradiction and $\text{dim}\big( S_{1, h}(\bsy) \big) = 1$.
Therefore, we can choose $S_{1, h}(\bsy)$ in \eqref{eq:minmax_fe} to give the inequality
\begin{align}
\label{eq:minmax_bound}
\lambda_{1, h}(\bsy) 
\,\leq\, 
\max_{0 \neq v_h\in S_{1, h}(\bsy)} \frac{\calA(\bsy; v_h, v_h)}{\calM(v_h, v_h)}
\,=\, 
\frac{\calA(\bsy; P_h u_1(\bsy), P_h u_1(\bsy))}{\calM(P_h u_1(\bsy), P_h u_1(\bsy))}\,.
\end{align}


Using the fact that the norm of the projection is bounded by 1, the numerator is bounded by
\begin{equation}
\label{eq:numer_bnd}
\calA(\bsy; P_h u_1(\bsy), P_h u_1(\bsy))
\,\leq\,
\calA(\bsy; u_1(\bsy), u_1(\bsy)) \,=\, 
\lambda_1(\bsy)\,,
\end{equation}
where for the equality in the last step we have used \eqref{eq:RQ}.

Expanding the denominator in \eqref{eq:minmax_bound} gives
\begin{align*}
\calM(P_h u_1(\bsy), P_h u_1(\bsy))
\,=\, \nrm{u_1(\bsy)}{\calM}^2 - 2\calM\big(u_1(\bsy), u_1(\bsy) - P_h u_1(\bsy)\big) 
\, + \,
\nrm{u_1(\bsy) - P_h u_1(\bsy)}{\calM}^2\,.
\end{align*}
The first term on the right is 1 by the normalisation of $u_1(\bsy)$ and
the last term is positive, so we can bound $\calM(P_h u_1(\bsy), P_h u_1(\bsy))$
from below by
\begin{equation*}
\calM(P_h u_1(\bsy), P_h u_1(\bsy)) 
\,\geq\, 1 - 2\calM(u_1(\bsy), u_1(\bsy) - P_h u_1(\bsy))\,.
\end{equation*}
Then, using the fact that $u_1(\bsy)$ is an eigenfunction
satisfying \eqref{eq:varevp} and using also the $\calA$-orthogonality of the projection $P_h$
we have
\begin{align}
\label{eq:denom_bnd}
\calM(P_h u_1(\bsy), P_h u_1(\bsy)) 
\,&\geq\, 1 - \frac{2}{\lambda_1(\bsy)} 
\nrm{u_1(\bsy) - P_h u_1(\bsy)}{\calA(\bsy)}^2
\nonumber\\
&\geq\, 1 - 2C' \nrm{u_1(\bsy) - P_h u_1(\bsy)}{V}^{2}\,,
\end{align}
with $C'>0$ as in \eqref{eq:appendix_contra}, using again the lower
  bound in \eqref{eq:lambda_bnd} and the equivalence of norms 
\eqref{eq:A-V-norm_equiv}.
For $h$ sufficiently small independently of $\bsy$, the right hand side of
\eqref{eq:denom_bnd} is positive and we can substitute it together with 
\eqref{eq:numer_bnd} into \eqref{eq:minmax_bound}.
Rearranging the resulting inequality gives
\[
\left|\lambda_1(\bsy) - \lambda_{1, h}(\bsy)\right|
\,\leq\,
2C' \lambda_{1, h}(\bsy)  \nrm{u_1(\bsy) - P_h u_1(\bsy)}{V}^{2}\,.
\]

Now all that remains is to show that $\lambda_{1, h}(\bsy)$
can be bounded from above independently of $\bsy$ and $h$.
Analogously to \eqref{eq:lambda_bnd}, using the FE min-max
representation \eqref{eq:minmax_fe} we have 
\[
\lambda_{1, h}(\bsy) \,\leq\, \frac{\amax}{\amin}(\chi_{1, h} + 1)\,,
\]
where $\chi_{1, h}$ corresponds to the smallest eigenvalue of the
negative Laplacian on $D$, discretised in the FE space $V_h$ with
boundary condition \eqref{eq:DBC}. It is well-known, see
e.g. \cite[Theorem 10.4]{Boffi10}, 
that in the current setting we have $|\chi_{1, h} - \chi_1| \le C'' h^2$ with $C'' > 0$ independent of $h$. 
Thus, for $h$ sufficiently small and independent of $\bsy$, there exists a 
constant such that $\lambda_{1, h}(\bsy)$ can be bounded independent of 
$\bsy$ and $h$ as required.
\end{proof}

\begin{Lemma}
\label{lem:eval_sep}
Let $\bsy \in U$ and $h > 0$. 
Then for all $k = 2, 3, \ldots, M_h = \dim (V_h)$
\begin{align}
\label{eq:eval_sep}
\frac{\lambda_1(\bsy)}{\lambda_{k, h}(\bsy) - \lambda_1(\bsy)}
\,\leq\,  \rho
\,\coloneqq\, \frac{\lambdabar}{\delta} \,,
\end{align}
with $\delta$ as in Proposition~\ref{prop:simple}
and $\lambdabar$ as in \eqref{eq:lambda_bnd}.
\end{Lemma}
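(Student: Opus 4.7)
The plan is to combine three ingredients already established in the paper: the upper bound on $\lambda_1(\bsy)$ from \eqref{eq:lambda_bnd}, the monotonicity of eigenvalues in $k$ together with the fact that FE eigenvalues converge from above (i.e.\ $\lambda_{k,h}(\bsy) \geq \lambda_k(\bsy)$ for all $h>0$), and the uniform spectral gap $\lambda_2(\bsy) - \lambda_1(\bsy) \geq \delta$ from Proposition~\ref{prop:simple}.

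First I would bound the denominator from below. Since $k \geq 2$ and the exact eigenvalues are in non-decreasing order, $\lambda_k(\bsy) \geq \lambda_2(\bsy)$. Since the FE eigenvalues majorise the continuous ones, $\lambda_{k,h}(\bsy) \geq \lambda_k(\bsy) \geq \lambda_2(\bsy)$. Subtracting $\lambda_1(\bsy)$ and applying Proposition~\ref{prop:simple} gives
\[
\lambda_{k,h}(\bsy) - \lambda_1(\bsy) \,\geq\, \lambda_2(\bsy) - \lambda_1(\bsy) \,\geq\, \delta \,>\, 0\,,
\]
uniformly in $\bsy \in U$ and in $h$.

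Next I would bound the numerator from above using \eqref{eq:lambda_bnd}, which yields $\lambda_1(\bsy) \leq \lambdabar$ uniformly in $\bsy$. Combining the two bounds gives
\[
\frac{\lambda_1(\bsy)}{\lambda_{k,h}(\bsy) - \lambda_1(\bsy)} \,\leq\, \frac{\lambdabar}{\delta} \,=\, \rho\,,
\]
which is the required estimate. There is no real obstacle here; the lemma is essentially a bookkeeping consequence of the uniform spectral gap and the above-ness of the Galerkin eigenvalues, assembled so that the constant $\rho$ is independent of both $\bsy$ and $h$ and therefore usable in the subsequent eigenfunction error argument.
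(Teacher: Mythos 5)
Your proof is correct and follows essentially the same route as the paper: lower-bound the denominator via $\lambda_{k,h}(\bsy)\ge\lambda_{2,h}(\bsy)\ge\lambda_2(\bsy)$ together with the uniform gap from Proposition~\ref{prop:simple}, and upper-bound the numerator by $\lambdabar$ from \eqref{eq:lambda_bnd}. No issues.
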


\begin{proof}

Since the FE eigenvalues converge to the true eigenvalues
from above, it follows from Proposition~\ref{prop:simple} and the upper bound 
on $\lambda_1(\bsy)$ in \eqref{eq:lambda_bnd} that
\[
\lambda_{2, h}(\bsy) - \lambda_1(\bsy)  \,\geq\,
\lambda_{2}(\bsy) - \lambda_1(\bsy)  \,\geq\,
\delta
\,\geq\, \delta\frac{\lambda_1(\bsy)}{\lambdabar}\,,
\]
which completes the proof because
the left hand side of \eqref{eq:eval_sep}
attains its maximum when $k = 2$.
\end{proof}

\begin{Lemma}
\label{lem:evec_proj_err}
Let $\bsy \in U$ and $h > 0$.
Then
\begin{align}
\label{eq:evec_proj_err}
\nrm{u_1(\bsy) - u_{1, h}(\bsy)}{\calM}
\leq C\nrm{u_1(\bsy) - P_h u_1(\bsy)}{\calM}\,,
\end{align}
where $C$ is independent of $\bsy$ and $h$.
\end{Lemma}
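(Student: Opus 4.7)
The proof will proceed by expanding the projection $P_h u_1(\bsy)$ in the $\calM$-orthonormal basis $(u_{k,h}(\bsy))_{k=1}^{M_h}$ of the discrete eigenspace and exploiting the $\calA(\bsy;\cdot,\cdot)$-orthogonality of $P_h$. Throughout we suppress the $\bsy$-dependence. First I would fix the sign of $u_{1,h}$ so that $\calM(u_1, u_{1,h}) \geq 0$, which is permissible because the discrete eigenvectors are defined only up to a sign. Writing $P_h u_1 = \sum_{k=1}^{M_h} \alpha_k u_{k,h}$ with $\alpha_k := \calM(P_h u_1, u_{k,h})$, the $\calM$-orthonormality gives $\|P_h u_1 - u_{1,h}\|_{\calM}^2 = (\alpha_1 - 1)^2 + \sum_{k=2}^{M_h}\alpha_k^2$, so the triangle inequality reduces the proof to bounding each of these two pieces by a constant multiple of $\|u_1 - P_h u_1\|_{\calM}$ (plus, possibly, a higher-order term in $\|u_1 - u_{1,h}\|_{\calM}$ that can be absorbed).

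The key identity comes from two computations of $\calA(\bsy; P_h u_1, u_{k,h})$. On one hand, $\calA$-orthogonality of $P_h$ plus the eigenvalue equation for $u_1$ give $\calA(\bsy; P_h u_1, u_{k,h}) = \calA(\bsy; u_1, u_{k,h}) = \lambda_1 \calM(u_1, u_{k,h})$. On the other hand, the expansion of $P_h u_1$ together with the $\calA$/$\calM$-orthogonality of the $u_{k,h}$ yields $\calA(\bsy; P_h u_1, u_{k,h}) = \alpha_k \lambda_{k,h}$. Combining these gives $\alpha_k = (\lambda_1/\lambda_{k,h}) \calM(u_1, u_{k,h})$, from which a short manipulation produces
\[
\alpha_k \,=\, \frac{\lambda_1}{\lambda_{k,h} - \lambda_1}\,\calM(u_1 - P_h u_1, u_{k,h}), \qquad k \geq 2.
\]
Applying Lemma~\ref{lem:eval_sep} to bound the prefactor by $\rho$, and then Bessel's inequality for the $\calM$-orthonormal system $(u_{k,h})_{k\ge 2}$, I obtain $\sum_{k\geq 2} \alpha_k^2 \leq \rho^2 \|u_1 - P_h u_1\|_{\calM}^2$.

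For the coefficient $\alpha_1$, I would use $\alpha_1 - 1 = \calM(P_h u_1 - u_1, u_{1,h}) + (\calM(u_1, u_{1,h}) - 1)$, giving $|\alpha_1 - 1| \leq \|u_1 - P_h u_1\|_{\calM} + |1 - \calM(u_1, u_{1,h})|$. The sign choice for $u_{1,h}$ and the normalisations $\|u_1\|_{\calM} = \|u_{1,h}\|_{\calM} = 1$ imply $\|u_1 - u_{1,h}\|_{\calM}^2 = 2(1 - \calM(u_1, u_{1,h}))$ and hence $\calM(u_1, u_{1,h}) \geq 0$, so that $\|u_1 - u_{1,h}\|_{\calM} \leq \sqrt{2}$ and $1 - \calM(u_1, u_{1,h}) = \tfrac{1}{2}\|u_1 - u_{1,h}\|_{\calM}^2 \leq (1/\sqrt{2})\|u_1 - u_{1,h}\|_{\calM}$.

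Finally, assembling the estimates through $\|u_1 - u_{1,h}\|_{\calM} \leq \|u_1 - P_h u_1\|_{\calM} + \|P_h u_1 - u_{1,h}\|_{\calM}$ yields a self-referential inequality of the form $\|u_1 - u_{1,h}\|_{\calM} \leq (2+\rho)\|u_1 - P_h u_1\|_{\calM} + (1/\sqrt{2})\|u_1 - u_{1,h}\|_{\calM}$, which on absorbing the last term into the left gives the desired bound with $C = (2+\rho)/(1 - 1/\sqrt{2})$, independent of $\bsy$ and $h$ in view of Lemma~\ref{lem:eval_sep}. The main obstacle is this absorption step: it relies crucially on the \emph{a priori} bound $\|u_1 - u_{1,h}\|_{\calM} \leq \sqrt{2}$, which in turn depends on correctly fixing the sign of the discrete eigenfunction (a subtlety that would break an unsigned application of the triangle inequality).
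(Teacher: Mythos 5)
Your proposal is correct and follows essentially the same route as the paper: the same expansion of $P_h u_1$ in the $\calM$-orthonormal discrete eigenbasis, the same key identity $\alpha_k = \lambda_1(\lambda_{k,h}-\lambda_1)^{-1}\,\calM(u_1 - P_h u_1, u_{k,h})$ for $k\ge 2$ (which the paper cites from Strang--Fix and you rederive from the two evaluations of $\calA(\bsy;P_h u_1,u_{k,h})$), Lemma~\ref{lem:eval_sep} for the prefactor, and Bessel's inequality. The only divergence is the endgame for the leading coefficient: the paper bounds $|1-\alpha_{1,h}|$ directly by $\|u_1-\alpha_{1,h}u_{1,h}\|_{\calM}$ via the reverse triangle inequality, which avoids your self-referential inequality and the absorption step (giving the slightly cleaner constant $2(1+\rho)$), whereas your absorption argument is also valid because the sign convention guarantees the a priori bound $\|u_1-u_{1,h}\|_{\calM}\le\sqrt{2}$.
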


\begin{proof}
The FE eigenfunctions form an orthonormal basis for $V_h$
with respect to $\calM(\cdot, \cdot)$, and so the projection of 
$u_1(\bsy)$ can be written as
\[
P_h u_1(\bsy) \,=\, \sum_{k = 1}^{M_h} 
\alpha_{k, h}(\bsy)u_{k, h}(\bsy)\,,
\]
where $\alpha_{k, h}(\bsy) \coloneqq \calM\left(P_h u_1(\bsy), u_{k, h}(\bsy)\right)$.

The key coefficient in this expansion is $\alpha_{1, h}(\bsy)$. 
If we assume that $\alpha_{1, h}(\bsy) \geq 0$ (which we can always
ensure by controlling the sign of $u_{1, h}(\bsy)$), then the size of $\alpha_{1, h}(\bsy)$
gives a measure of how close $P_h u_1(\bsy)$ is to $u_{1, h}(\bsy)$.
As a first step towards \eqref{eq:evec_proj_err}, consider the difference
\begin{align}
\label{eq:evec-alpha}
\nrm{u_1(\bsy) - \alpha_{1, h}(\bsy)u_{1, h}(\bsy)}{\calM}
\, \leq\, 
\nrm{u_1(\bsy) - P_h u_1(\bsy)}{\calM}
\, + \, \nrm{P_h u_1(\bsy) - \alpha_{1, h}(\bsy)u_{1, h}(\bsy)}{\calM}\,.
\end{align}

The first term is exactly our target upper bound in \eqref{eq:evec_proj_err}. 
The square of the second term can be written as
\[
\nrm{P_h  u_1(\bsy) - \alpha_{1, h}(\bsy)u_{1, h}(\bsy)}{\calM}^2
\,=\, \sum_{k = 2}^{M_h} \alpha_{k, h}(\bsy)^2\,.
\]
By \cite[Lemma 6.4]{SF73} (or as is easily verified),
we can replace $\alpha_{k, h}(\bsy)$ by
\[
\alpha_{k, h}(\bsy) \,=\, \frac{\lambda_1(\bsy)}{\lambda_{k, h}(\bsy) - \lambda_1(\bsy)}
\calM\left(u_1(\bsy) - P_h u_1(\bsy), u_{k, h}(\bsy)\right)\,.
\]
Hence, using Lemma~\ref{lem:eval_sep} and letting $Q_h$ denote the
  $\calM$-orthogonal projection onto $V_h$, we can bound
\begin{align*}
\nrm{P_h  u_1(\bsy) - \alpha_{1, h}(\bsy)u_{1, h}(\bsy)}{\calM}^2
\,&\leq\, 
\rho^2\sum_{k = 2}^{M_h}
\calM\left(u_1(\bsy) - P_h u_1(\bsy), u_{k, h}(\bsy)\right)^2\\
&= \, 
\rho^2\sum_{k = 2}^{M_h}
\calM\Big(Q_h\big(u_1(\bsy) - P_h u_1(\bsy)\big), u_{k, h}(\bsy)\Big)^2\\[1ex]
&\leq\,
\rho^2\nrm{Q_h\big(u_1(\bsy) - P_h u_1(\bsy)\big)}{\calM}^2 \ \ 
\leq \ \  \rho^2 \nrm{u_1(\bsy) - P_h u_1(\bsy)}{\calM}^2\,.
\end{align*}
Thus, our intermediate bound \eqref{eq:evec-alpha}
can be written as
\begin{equation}
\label{eq:evec-alpha-2}
\nrm{u_1(\bsy) - \alpha_{1, h}(\bsy)u_{1, h}(\bsy)}{\calM}
\,\leq\, 
(1 + \rho)\nrm{u_1(\bsy) - P_h u_1(\bsy)}{\calM}\,.
\end{equation}

The final step to prove \eqref{eq:evec_proj_err} is to show that 
$\alpha_{1, h}(\bsy)$ is close to 1.
To that end, using the reverse triangle inequality and the
  fact that both $u_1(\bsy)$ and $u_{1, h}(\bsy)$ are normalised we
have the following measure of how close $\alpha_{1, h}(\bsy)$ is to 1:
\begin{equation}
\label{eq:alpha_1_bound}
\nrm{u_1(\bsy) - \alpha_{1, h}(\bsy) u_{1, h}(\bsy)}{\calM}
\,\geq\,
\left|\nrm{u_1(\bsy)}{\calM} - \alpha_{1, h}(\bsy)\nrm{u_{1, h}(\bsy)}{\calM}\right|
\,=\,
|1 - \alpha_{1, h}(\bsy)|\,.
\end{equation}
Finally, combining \eqref{eq:evec-alpha-2} and \eqref{eq:alpha_1_bound},
it follows by the triangle inequality that
\begin{align*}
\nrm{u_1(\bsy) - u_{1, h}(\bsy)}{\calM}
&\,\leq\,
\nrm{u_1(\bsy) - \alpha_{1, h}(\bsy) u_{1,h}(\bsy)}{\calM}
+ |1 - \alpha_{1, h}(\bsy)|\nrm{u_{1, h}(\bsy)}{\calM}
\\
&\leq\,
2(1 + \rho)\nrm{u_1(\bsy) - P_h u_1(\bsy)}{\calM}\,.
\end{align*}
\end{proof}

We now have all of the ingredients needed to prove our FE error bounds.

\begin{namedproof}{Theorem \ref{thm:fe_err}.}
The eigenvalue error bound \eqref{eq:lambda_fe_err} follows
directly from Lemmas~\ref{lem:proj_err} and \ref{lem:eval_proj_err}.
All of the constants
involved are independent of $\bsy$ and $h$, so the final constant is also.

For the bound on the eigenfunction error, we use \cite[Lemma 3.1]{BO89}
to write
\[
\nrm{u_1(\bsy) - u_{1, h}(\bsy)}{\calA(\bsy)}^2
\,=\, \lambda_{1, h}(\bsy) - \lambda_1(\bsy)
+ \lambda_1(\bsy)\nrm{u_1(\bsy) - u_{1, h}(\bsy)}{\calM}^2\,.
\]
Then, using the lower bound in the norm equivalence
\eqref{eq:A-V-norm_equiv}, as well as the upper bound on
$\lambda_1(\bsy)$ in \eqref{eq:lambda_bnd} and the fact that
$\lambda_{1,h} \ge \lambda_1$ this leads to
\begin{align}
\label{eq:evec_error_Vnorm}
\nrm{u_1(\bsy) - u_{1, h}(\bsy)}{V}^2
\, \leq\, \frac{1}{\amin} |\lambda_{1, h}(\bsy) - \lambda_1(\bsy)|
\, +\, 
\frac{\lambdabar}{\amin}
\nrm{u_1(\bsy) - u_{1, h}(\bsy)}{\calM}^2\,.
\end{align}
Now, combining Lemma~\ref{lem:evec_proj_err} with the upper bound
  in the norm equivalence \eqref{eq:nrms_equiv} and Poincar\'e's
  inequality \eqref{eq:poin} we can estimate
\begin{align*}
\nrm{u_1(\bsy) - u_{1, h}(\bsy)}{\calM}
\,\leq\, C \nrm{u_1(\bsy) - P_h u_1(\bsy)}{\calM}
\, \leq\, C \left(\frac{\amax}{\chi_1}\right)^{1/2} \nrm{u_1(\bsy) - P_h
  u_1(\bsy)}{V} \,,
\end{align*}
where $C>0$ is the constant from
  Lemma~\ref{lem:evec_proj_err}. Using this bound in
  \eqref{eq:evec_error_Vnorm}  together with Lemmas \ref{lem:proj_err} and
  \ref{lem:eval_proj_err} we get
\[
\nrm{u_1(\bsy) - u_{1, h}(\bsy)}{V}
\,\leq\, C' h \,,
\]
for some constant $C'>0$ depending only on $\amax$, $\amin$ and
$\chi_1$, as well as the constants in Lemmas \ref{lem:proj_err}, 
\ref{lem:eval_proj_err} and \ref{lem:evec_proj_err}, which are all
independent of $\bsy$ and $h$. This completes the proof of \eqref{eq:u_fe_err}.

Having established the error in the $V$-norm, we use the classical
Aubin-Nitsche duality argument to prove the final error bound \eqref{eq:Gu_fe_err}. 
Let $\calG \in H^{-1 + t}(D)$ and consider the dual problem:
Find $v_\calG(\bsy) \in V$ such that 
\begin{equation}
\label{eq:dual}
\calA(\bsy; w, v_\calG(\bsy)) \,=\, \calG(w)
\quad \text{for all } w \in V\,.
\end{equation}
Due to the symmetry of $\calA(\bsy; \cdot, \cdot)$, the standard theory for 
elliptic boundary value problems guarantees the existence of a unique
solution $v_\calG(\bsy)  \in V$ such that $\nrm{v_\calG(\bsy)}{V} \leq
C_1 \nrm{\calG}{V^*}$. In fact, it can also be shown that 
$v_\calG(\bsy) \in Z^t := V \cap H^{1 + t}(D)$ with 
$\nrm{v_\calG(\bsy)}{Z^t} \leq C_2$. The fact that $C_1,C_2 >0$ are
independent of $h$ is classical; the independence of $\bsy$ has been shown in
\cite{KSS12}. Thus, using
the norm equivalences in
\eqref{eq:A-V-norm_equiv} and the best-approximation property of $P_h
v_\calG(\bsy)$ in the energy norm we get
\begin{equation}
\label{eq:fe_dual_err}
\nrm{v_\calG(\bsy) - P_h v_{\calG}(\bsy)}{V}
\,\leq\, 
\sqrt{\frac{\lambdabar}{\chi_1}}
\inf_{w_h \in V_h}\nrm{v_\calG(\bsy) - w_h}{V} \,\leq\,
C_3 h^t\,,
\end{equation}
with $C_3>0$ again independent of $\bsy$ and $h$, where in the last
inequality we have used the approximation property
\eqref{eq:fe_approx} and the bound on the $Z^t$-norm.

Letting $w = u_1(\bsy) - u_{1, h}(\bsy)$ in \eqref{eq:dual},
by the definition of $P_h$ and the boundedness of the
 bilinear form \eqref{eq:A_bounded}
 there finally exists a constant $C_4 > 0$ independent of $\bsy$
   and $h$ such that
\begin{align*}
\left|\calG(u_1(\bsy)) - \calG(u_{1, h}(\bsy))\right|
\,&=\, \left|\calG(u_1(\bsy) - u_{1, h}(\bsy))\right|
\,=\, \left|\calA(\bsy; u_1(\bsy)) - u_{1, h}(\bsy), v_\calG(\bsy))\right|\\
&=\, \left|\calA(\bsy; u_1(\bsy) - u_{1, h}(\bsy), v_\calG(\bsy) - P_h v_{\calG}(\bsy) )\right|\\
&\leq\, \amax(1 + \chi_1) \nrm{u_1(\bsy) - u_{1, h}(\bsy)}{V}
\nrm{v_\calG(\bsy) - P_h v_{\calG}(\bsy) }{V}
\,\leq\, C_4 h^{1 + t}\,.
\end{align*}
In the last step, we have used the upper bound \eqref{eq:u_fe_err} on the 
FE error for the eigenfunction together with
  the dual error bound in \eqref{eq:fe_dual_err}.
\end{namedproof}
\end{appendix}

\bibliographystyle{plain}

\begin{thebibliography}{10}


\bibitem{AS12} R.~Andreev and Ch.~Schwab.
\newblock {Sparse tensor approximation of parametric eigenvalue problems}.
\newblock In I.~G.~Graham et~al. (Ed.), {\em Numerical Analysis of Multiscale
  Problems, Lecture Notes in Computational Science and Engineering}, pp.
  203--241. Springer-Verlag, Berlin Heidelberg, Germany, 2012.

\bibitem{AC11} B.~Andrews and J. Clutterbuck.
Proof of the fundamental gap conjecture.
\emph{J. Amer. Math. Soc.}
24:899--916, 2011.

\bibitem{AI10} M.~N.~Avramova and K.~N.~Ivanov.
\newblock {Verification, validation and uncertainty quantification in multi-physics modeling for nuclear reactor design and safety analysis}.
\newblock {\em Prog. Nucl. Energy}, 52(7):601–-614, 2010.

\bibitem{AEHW12} D.~A.~F.~Ayres, M.~D.~Eaton, A.~W.~Hagues and M.~M.~R.~Williams.
\newblock {Uncertainty quantification in neutron transport with generalized polynomial chaos using the method of characteristics}.
\newblock {\em Ann. Nucl. Energy}, 45:14-–28, 2012.

\bibitem{BO87} I.~Babu\v{s}ka and J.~Osborn.
\newblock {Estimates for the errors in eigenvalue and eigenvector approximation by Galerkin
methods, with particular attention to the case of multiple eigenvalues}.
\newblock {\em SIAM J. Numer. Anal.}, 24:1249--1276, 1987.

\bibitem{BO89} I.~Babu\v{s}ka and J.~Osborn.
\newblock {Finite element-Galerkin approximation of eigenvalues and eigenvectors
of selfadjoint problems}.
\newblock {\em Math. Comp.}, 52:275--297, 1989.

\bibitem{BO91} I.~Babu\v{s}ka and J.~Osborn.
\newblock {Eigenvalue problems}.
\newblock In P.~G. Ciarlet and J.~L. Lions (Ed.), {\em Handbook of Numerical
  Analysis, Volume 2: Finite Element Methods (Part 1)}, pp. 641--787.
  Elsevier Science, Amsterdam, The Netherlands, 1991.

\bibitem{Boffi10} D.~Boffi.
Finite element approximation of eigenvalue problems.
\emph{Acta Numerica}, 19:1--120, 2010.

\bibitem{Brezis11} H.~Brezis,
\textit{Functional Analysis, Sobolev Spaces and Partial Differential Equations}.
Universitext, Springer, New York, 2011.


\bibitem{Char12} J. Charrier.
Strong and weak error estimates for elliptic partial differential equations 
with random coefficients.
\emph{SIAM J. Numer. Anal.}, 50:216--246, 2012.

\bibitem{DKS13} J.~Dick, F.~Y.~Kuo, and I.~H.~Sloan.
\newblock {High-dimensional integration: The quasi-Monte Carlo way}.
\newblock {\em Acta Numerica}, 22:133--288, 2013.

\bibitem{DKLeGNS14} J.~Dick, F.~Y.~Kuo, Q.~T.~Le Gia, D. Nuyens and Ch. Schwab.
Higher order QMC Petrov-Galerkin discretisation for affine parametric operator
equations with random field inputs.
\emph{SIAM J. Numer. Anal.}, 52:2676--2702, 2014.

\bibitem{D99} D.~C.~Dobson.
\newblock {An efficient method for band structure calculations in 2D photonic crystals.}
\newblock  {\em J. Comput. Phys.}, 149(2):363--376, 1999.

\bibitem{DH76} J.~J.~Duderstadt and L.~J.~Hamilton. 
\newblock {\em Nuclear Reactor Analysis}.
\newblock John Wiley \& Sons, Inc., 1976.

\bibitem{E12} V.~Ehrlacher.
\newblock {\em Some Mathematical Models in Quantum Chemistry and Uncertainty
Quantification.} 
\newblock  PhD Thesis, CERMICS, Universit\'e Paris-Est, 2012.

\bibitem{FMPV16} I.~Fumagalli, A.~Manzoni, N.~Parolini and M.~Verani.
\newblock {Reduced basis approximation and a posteriori error estimates
for parametrized elliptic eigenvalue problems.}
\newblock {\em ESAIM: M2AN}, 50:1857--1885, 2016.


\bibitem{Gant18} R. Gantner.
\newblock{Dimension truncation in QMC for affine-parametric operator equations.}
\newblock{In A. B. Owen and P. W. Glynn (Ed.), {\em Monte Carlo and Quasi-Monte Carlo Methods 2016}}, pp. 249--264.
Springer-Verlag, Berlin-Heidelberg, 2018.

\bibitem{GGR05} D.~Ghosh, R.~G.~Ghanem and J.~Red-Horse.  
\newblock {Analysis of eigenvalues and modal interaction of stochastic
  systems.} 
\newblock {\em AIAA Journal}, 43(10):2196-2201, 2005.

\bibitem{GG12} S.~Giani and I.~G.~Graham.
\newblock {Adaptive finite element methods for computing band gaps in
photonic crystals.}

\bibitem{Hen06} A.~Henrot.
\emph{Extremum Problems for Eigenvalues of Elliptic Operators}.
Birkh\"auser Verlag, Basel, Switzerland, 2006.

\bibitem{Hoer03} L.~H\"ormander.
\newblock{\em The Analysis of Linear Partial Differential Operators I}.
\newblock Springer-Verlag, Berlin Heidelberg, Germany, 2003.

\bibitem{HWD17} T. Horger, B. Wohlmuth and T. Dickopf.
\newblock {Simultaneous reduced basis approximation of parameterized 
elliptic eigenvalue problems}
\newblock {\em ESAIM: M2AN}, 51:443--465, 2017

\bibitem{JC13} E.~Jamelota and P.~Ciarlet Jr.
\newblock {Fast non-overlapping Schwarz domain decomposition methods for solving the neutron diffusion equation}
\newblock {\em J. Comput. Phys.}, 241:445–-463, 2013.

\bibitem{Kato84} T.~Kato.
\newblock{\em {Perturbation Theory for Linear Operators}}.
\newblock Springer-Verlag, Berlin Heidelberg, Germany, 1984.

\bibitem{K01} P.~Kuchment.
\newblock {\em The Mathematics of Photonic Crystals}.
\newblock SIAM, Frontiers of Applied Mathematics, 22:207--272, 2001.

\bibitem{KSS12} F.~Y.~Kuo, Ch.~Schwab, and I.~H. Sloan.
\newblock {Quasi-Monte Carlo finite element methods for a class of elliptic
  partial differential equations with random coefficients}.
\newblock {\em SIAM J. Numer. Anal.}, 50(6):3351 -- 3374, 2012.


\bibitem{MMOPR00} L.~Machiels, Y.~Maday, I.~B.~Oliveira, A.~T.~Patera, D.~V.~Rovas.
\newblock {Output bounds for reduced-basis approximations of symmetric positive 
definite eigenvalue problems.}
\newblock {\em C. R. Acad. Sci. Paris, S\'er. I}, 331:153--158, 2000.

\bibitem{NS12} R.~Norton and R.~Scheichl.
\newblock {Planewave expansion methods for photonic crystal fibres}. 
\newblock {\em Appl. Numer. Math.}, 63:88--104, 2012.

\bibitem{NC06} D.~Nuyens and R.~Cools.
\newblock {Fast algorithms for component-by-component construction of rank-1
  lattice rules in shift-invariant reproducing kernel Hilbert spaces}.
\newblock {\em Math. Comp.}, 75(254):903--920, 2006.

\bibitem{NC06np} D.~Nuyens and R.~Cools.
\newblock {Fast component-by-component construction of rank-1 lattice rules
  with a non-prime number of points}.
\newblock {\em J. Complexity}, 22:4--28, 2006.

\bibitem{Pau07} G.~S.~H.~Pau.
\newblock{Reduced-basis method for band structure calculations}.
\newblock{\em Phys. Rev. E}, 79:046704, 2007.

\bibitem{Pet04} C.~L.~Pettit.  
\newblock {Uncertainty quantification in aeroelasticity: recent
  results and research challenges.} 
\newblock {\em J. Aircraft}, 41(5):1217--1229, 2004.

\bibitem{S97} R.~Scheichl.
\newblock {\em Parallel Solution of the Transient Multigroup Neutron Diffusion Equations with Multi-Grid and Preconditioned Krylov-Subspace Methods} (Master's Thesis). 
\newblock Schriften der Johannes Kepler Universit\"at Linz, Vol.~C21,
Trauner-Verlag, Linz, 1997.


\bibitem{SKJ02b} I.~H.~Sloan, F.~Y.~Kuo, and S.~Joe.
\newblock {Constructing randomly shifted lattice rules in weighted Sobolev
  spaces}.
\newblock {\em {SIAM J. Numer. Anal.}}, 40(5):1650--1665, 2002.

\bibitem{SW98} I.~H.~Sloan and H.~Wo\'zniakowski.
\newblock {When are quasi-Monte Carlo algorithms efficient for high dimensional
  integrals?}
\newblock {\em J. Complexity}, 14(1):1--33, 1998.

\bibitem{SF73} G. Strang and G. Fix.
\emph{An Analysis of the Finite Element Method}.
Wellesley-Cambridge Press, Wellesley, MA, USA, 1973.

\bibitem{VdB15} G.~Van den Branden.
\emph{Nuclear Reactor Theory. Exercises: Part 1(Prof. W.~D'haeseleer)}.
Belgian Nuclear Higher Education Network (BNEN) Course (Prof. W. D’haeseleer), KU Leuven, 2015 (available at 
\href{https://people.mech.kuleuven.be/~william/BNEN/NRT\%202014-2015/Exercises\%20BNEN\%20NRT_WDH_2009_2010.pdf}{\texttt{https://people.mech.kuleuven.be/\textasciitilde{}william/BNEN/NRT\%202014-2015/Exercises\%20BNEN\%20NRT\_WDH\_2009\_2010.pdf}}).


\bibitem{W66} E.~L.~Wachspress.
\newblock {\em Iterative Solution of Elliptic Systems and Applications to
  the Neutron Diffusion Equations of reactor Physics}
\newblock Prentice--Hall, Inc., Englewood Cliffs, NJ, USA, 1966.

\bibitem{W10} M.~M.~R.~Williams.
\newblock {A method for solving stochastic eigenvalue problems}.
\newblock {\em Appl. Math. Comput.}, 215(11):4729-–4744, 2010.

\bibitem{W13} M.~M.~R.~Williams.
\newblock {A method for solving stochastic eigenvalue problems~II}.
\newblock {\em Appl. Math. Comput.}, 219(9), 4729-–4744, 2013.

\bibitem{ZCC15} Z.~Zhang, W.~Chen and X.~Cheng.
\newblock {Sensitivity analysis and optimization of eigenmode
  localization in continuum systems.}
\newblock {\em Struct. Multidiscip. O.}, 52(2):305-–317, 2015.

\end{thebibliography}

\end{document}